\newcommand{\bS}{\boldsymbol{S}}
\newtheorem*{thm-A}{Theorem A}
\newtheorem*{thm-B}{Theorem B}
\newtheorem*{thm-C}{Theorem C}
\newtheorem*{thm-D}{Theorem D}
\newtheorem*{corollary}{Corollary}
\newtheorem*{splitting}{Splitting Theorem}
\newtheorem*{conjecture}{Conjecture}
\newtheorem{theorem}{Theorem}[section]
\newtheorem{cor}[theorem]{Corollary}
\newtheorem{prop}[theorem]{Proposition}
\newtheorem{lem}[theorem]{Lemma}
\newtheorem{defn}[theorem]{Definition}
\theoremstyle{definition}
\newtheorem{example}[theorem]{\bf Example}
\newtheorem{remark}{{\it Remark}}
\numberwithin{equation}{section}
\newcommand{\EQ}{\begin{equation}\begin{array}{lllllllll}}
\newcommand{\EE}{\end{array}\end{equation}}
\newcommand{\MT}{\left[ \begin{array}{ccccccccc}}
\newcommand{\EM}{\end{array}\right]}
\newcommand{\eq}{\begin{equation}\begin{array}{lclllllllllllllll}}
\newcommand{\ee}{\end{array}\end{equation}}
\newcommand{\bmt}{\left[ \begin{array}{ccccccccc}}
\newcommand{\emt}{\end{array}\right]}
\newcommand{\bea}{\begin{eqnarray}}
\newcommand{\eea}{\end{eqnarray}}
\newcommand{\bean}{\begin{eqnarray*}}
\newcommand{\eean}{\end{eqnarray*}}
\journal{xxx}
\begin{document}

\begin{frontmatter}


\title{Stability Criteria via Common Non-strict Lyapunov Matrix for Discrete-time Linear Switched Systems}

\tnotetext[label1]{Project was supported partly by National Natural Science Foundation of
China (Grant Nos.~11071112 and 11071263), the NSF of Guangdong Province and in part by NSF 0605181 and 1021203 of the United States.}%
 \author[label2]{Xiongping Dai}
 \address[label2]{Department of Mathematics, Nanjing University, Nanjing 210093, People's Republic of China}
 \ead{xpdai@nju.edu.cn}

 \author[label3]{Yu Huang}
 \address[label3]{Department of Mathematics, Zhongshan (Sun Yat-Sen) University, Guangzhou 510275, People's Republic of China}
 \ead{stshyu@mail.sysu.edu.cn}

 \author[label4]{Mingqing Xiao}
 \address[label4]{Department of Mathematics, Southern Illinois University,
Carbondale, IL 62901-4408, USA}
 \ead{mxiao@math.siu.edu}

\begin{abstract}
Let $\bS=\{S_1,S_2\}\subset\mathbb{R}^{d\times d}$ have a common, but not necessarily strict, Lyapunov matrix (i.e. there exists a symmetric positive-definite matrix $P$ such that $P-S_k^TPS_k\ge0$ for $k=1,2$). Based on a splitting theorem of the state space $\mathbb{R}^d$ (Dai, Huang and Xiao, arXiv:1107.0132v1[math.PR]), we establish several stability criteria for the discrete-time linear switched dynamics
\begin{equation*}
x_n=S_{\sigma_{\!n}}\dotsm S_{\sigma_{\!1}}(x_0),\quad x_0\in\mathbb{R}^d\textrm{ and }n\ge1
\end{equation*}
governed by the switching signal $\sigma\colon\mathbb{N}\rightarrow\{1,2\}$. More specifically, let $\rho(A)$ stand for the spectral radius of a matrix $A\in\mathbb{R}^{d\times d}$, then the outline of results obtained in this paper are: (1) For the case $d=2$, $\bS$ is absolutely stable (i.e., $\|S_{\sigma_{\!n}}\dotsm S_{\sigma_{\!1}}\|\to0$ driven by all switching signals $\sigma$) if and only if $\rho(S_1),\rho(S_2)$ and $\rho(S_1S_2)$ all are less than $1$; (2) For the case $d=3$, $\bS$ is absolutely stable if and only if $\rho(A)<1\;\forall A\in\{S_1,S_2\}^\ell$ for $\ell=1,2,3,4,5,6$, and $8$. This further implies that for any $\bS=\{S_1,S_2\}\subset\mathbb{R}^{d\times d}$ with the generalized spectral radius $\rho(\bS)=1$ where $d=2$ or $3$, if $\bS$ has a common, but not strict in general, Lyapunov matrix, then $\bS$ possesses the spectral finiteness property.
\end{abstract}
\begin{keyword}
Linear switched/inclusion dynamics\sep non-strict Lyapunov matrix\sep asymptotic stability \sep finiteness property

\medskip
\MSC[2010] 93D20\sep 37N35
\end{keyword}
\end{frontmatter}
\section{Introduction}\label{secI}%

\subsection{Motivations}
Let $\mathbb{R}^{d\times d}$ be the standard topological space of all $d$-by-$d$ real matrices
where $2\le d<+\infty$, and for any $A\in\mathbb{R}^{d\times d}$, by $\rho(A)$ we denote the spectral radius of $A$. In addition, we identify $A$ with its induced operator $A(\cdot)\colon x\mapsto Ax$ for $x\in\mathbb{R}^d$.
Let $\bS=\{S_1,\dotsc, S_K\}\subset\mathbb{R}^{d\times d}$ be a finite set with $2\le K<+\infty$.  We consider the stability and stabilization of the
linear inclusion/control dynamics
\begin{equation}\label{eq1.1}
x_n\in\left\{S_1,\dotsc,S_K\right\}(x_{n-1}),\quad x_0\in\mathbb{R}^d\textrm{ and }n\ge1.
\end{equation}
As in \cite{DHX-aut, Dai-JDE}, we denote by $\varSigma_{\!K}^+$
the set of all admissible control signals $\sigma\colon\mathbb{N}\rightarrow\{1,\dotsc,K\}$, equipped with the standard product topology. Here and in the sequel $\mathbb{N}=\{1,2,\dotsc\}$ and for any $\sigma\in\varSigma_{\!K}^+$ we will simply write $\sigma(n)=\sigma_{\!n}$ for all $n\ge1$.

For any input $(x_0,\sigma)$, where $x_0\in\mathbb{R}^d$ is an
initial state and
$\sigma=(\sigma_{\!n})_{n=1}^{+\infty}\in\varSigma_{\!K}^+$ a control
(switching) signal, there is a unique output
$\langle x_n(x_0,\sigma)\rangle_{n=1}^{+\infty}$, called an orbit of the system (\ref{eq1.1}), which
corresponds to the unique solution of the discrete-time linear switched dynamics
\begin{equation}\label{eq1.2}
x_n=S_{\sigma_{\!n}}\cdots S_{\sigma_{\!1}}(x_0),\quad x_0\in\mathbb{R}^d\textrm{ and }n\ge1
\end{equation}
driven/governed by the switching signal $\sigma$. Then as usual, $\bS$ is called (asymptotically) \emph{stable} driven by $\sigma$ if
\begin{equation*}
\lim_{n\to+\infty}\|S_{\sigma_{\!n}}\cdots S_{\sigma_{\!1}}(x_0)\|=0\; \forall x_0\in\mathbb{R}^d;\quad \textrm{or equivalently},\quad\|S_{\sigma_{\!n}}\cdots S_{\sigma_{\!1}}\|\to0\textrm{ as }n\to+\infty.
\end{equation*}
$\bS$ is said to be \emph{absolutely stable} if it is stable driven by all switching signals $\sigma\in\varSigma_{\!K}^+$; see, e.g., \cite{Gur-LAA95}. We note that the stability of $\bS$ is independent of the norm $\|\cdot\|$ used here.

It is a well-known fact that if each member $S_k$ of $\bS$ shares a common Lyapunov matrix; i.e., there exists a symmetric positive-definite matrix $Q\in\mathbb{R}^{d\times d}$ such that
\begin{equation*}
Q-S_k^TQS_k>0\quad (1\le k\le K),
\end{equation*}
then $\bS$ is absolutely stable. Here $^T$ stands for the transpose operator of matrices or vectors. An essentially weak condition is that each member $S_k$ of $\bS$ shares a common, ``but not necessarily strict,'' Lyapunov matrix; that is, there exists a symmetric positive-definite matrix $P$ such that
\begin{subequations}\label{eq1.3}
\begin{align}
&P-S_k^TPS_k\ge0,\quad 1\le k\le K.\label{eq1.3a}\\
\intertext{Here ``$A\ge0$'' means $x^TAx\ge0\,\forall x\in\mathbb{R}^d$. Associated to the weak Lyapunov matrix $P$ as in (\ref{eq1.3a}), we define the vector norm on $\mathbb{R}^d$ as}&\|x\|_P=\sqrt{x^TPx}\quad\forall x\in\mathbb{R}^d.\label{eq1.3b}
\end{align}
\end{subequations}
(We also write its induced operator/matrix norm on $\mathbb{R}^{d\times d}$ as $\|\cdot\|_P$.) Then, $\|S_k\|_P\le1$ for all $1\le k\le K$. Condition (\ref{eq1.3a}) is both practically important and academically challenging, for example, \cite{MS, BL00, Hart02, BL05, Sun08} for the continuous-time case and \cite{Gur-LAA95} for discrete case. Indeed, it is desirable in many practical issues and is closely related to periodic solutions and limit cycles, see, e.g., \cite{Bar88-SMJ, Bar93} and \cite[Proposition~18]{PW-LAA08}; in addition, if $S_k, 1\le k\le K$, are paracontractive (i.e., $x^TS_k^TS_kx\le x^Tx$ for all $x\in\mathbb{R}^d$, and ``$=$'' holds if and only if $S_k(x)=x$, see, e.g., \cite{SVR:IEEE}), then condition (\ref{eq1.3a}) holds.

In this paper, we will study the stability of $\bS$ that satisfies condition (\ref{eq1.3a}). Even under condition (\ref{eq1.3a}), the stability of every subsystems $S_k$ does not implies the absolute stability of $\bS$, as shown by Example~\ref{exaVI.6} constructed in Section~\ref{secVI}. So, our stability criteria\,---\,Theorems~A, B, C, and D\,---\, established in this paper, are nontrivial.

\subsection{Stability driven by nonchaotic switching signals}\label{secI.2}
Under condition (\ref{eq1.3a}), in \cite{Bal} for the continuous-time case, Balde and Jouan provided a large class of switching signals for which a large class of switched systems are stable, by considering nonchaotic inputs and the geometry of $\omega$-limit sets of the matrix sequences $\langle S_{\sigma_{\!n}}\dotsm S_{\sigma_{\!1}}\rangle_{n=1}^{+\infty}$.

Recall from \cite[Definition~1]{Bal} that a switching signal $\sigma=(\sigma_{\!n})_{n=1}^{+\infty}\in\varSigma_{\!K}^+$ is said to be \emph{nonchaotic}, if to any sequence $\langle n_i\rangle_{i\ge1}\nearrow+\infty$ and any $m\ge1$ there corresponds some integer $\delta$ with $2\le\delta\le m+1$ such that $\forall \ell_0\ge1$, $\exists \ell\ge \ell_0$ so that $\sigma$ is constant restricted to some subinterval of $[n_\ell, n_\ell+m]$ of length greater than or equal to $\delta$. A switching signal $\sigma\in\varSigma_{\!K}^+$ is said to be \emph{generic}~\cite{Gur-LAA95} (or \emph{regular} in \cite{Bal}) if each alphabet in $\{1,\dotsc, K\}$ appears infinitely many times in the sequence $\sigma=(\sigma_{\!n})_{n=1}^{+\infty}$.

Then our first stability criterion obtained in this paper can be stated as follows:

\begin{thm-A}
Let $\bS=\{S_1,\dotsc,S_K\}\subset\mathbb{R}^{d\times d}$ satisfy condition (\ref{eq1.3a}) with $\rho(S_k)<1$ for all $1\le k\le K$. Then
\begin{equation*}
\|S_{\sigma_{\!n}}\cdots S_{\sigma_{\!1}}\|\to0\quad \textrm{as }n\to+\infty
\end{equation*}
for any nonchaotic switching signal $\sigma=(\sigma_{\!n})_{n=1}^{+\infty}\in\varSigma_{\!K}^+$.
\end{thm-A}

We note that in Theorem~A, if $\sigma$ is additionally generic (regular), then this statement is a direct consequence of \cite[Theorem~3]{Bal}. However, without the genericity of $\sigma$, here we need to explore an essential property of a nonchaotic switching signal; see Lemma~\ref{lemII.1} below. In the case of $d=2$ and $K=2$, an ergodic version of Theorem~A will be stated in Corollary~\ref{corV.3} in Section~\ref{secV}.

As is shown by Example~\ref{exaVI.6} mentioned before, under the assumption of Theorem~A, one cannot expect the stability of $\bS$ driven by an arbitrary switching signal.
\subsection{A splitting theorem driven by recurrent signals}\label{secI.3}
Next, we consider another type of switching signal\,---\,recurrent switching signal, which does not need to be nonchaotic and balanced and which seems more general from the viewpoint of ergodic theory. In fact, all recurrent switching signals form a set of total measure $1$.

Corresponding to a switching signal $\sigma=(\sigma_{\!n})_{n=1}^{+\infty}\in\varSigma_{\!K}^+$, for the system $\bS$ we define two important subspaces of the state space $\mathbb{R}^d$:
\begin{align*}
&E^s(\sigma)=\left\{x_0\in\mathbb{R}^d\colon\|S_{\sigma_{\!n}}\cdots S_{\sigma_{\!1}}(x_0)\|_P\to0\textrm{ as }n\to+\infty\right\}\\
\intertext{and}&E^c(\sigma)=\left\{x_0\in\mathbb{R}^d\colon \exists\, \langle n_i\rangle_{i=1}^{+\infty}\nearrow+\infty\textrm{ such that }\lim_{i\to+\infty}S_{\sigma_{\!n_i}}\cdots S_{\sigma_{\!1}}(x_0)=x_0\right\};
\end{align*}
called, respectively, the \emph{stable} and \emph{central manifolds} of $\bS$ driven by $\sigma$. Here $E^s(\sigma)$ and $E^c(\sigma)$ are indeed independent of the norm $\|\cdot\|_P$.

A switching signal $\sigma=(\sigma_{\!n})_{n=1}^{+\infty}\in\varSigma_{\!K}^+$ is called \emph{recurrent} under the classical one-sided Markov shift transformation, $\theta\colon\sigma(\cdot)\mapsto\sigma(\cdot+1)$, of $\varSigma_{\!K}^+$, if for any $\ell\ge1$ there exists some $m$ sufficiently large such that
\begin{equation*}
(\sigma_{\!1}, \dotsc,\sigma_{\!\ell})=(\sigma_{\!1+m}, \dotsc,\sigma_{\!\ell+m}).
\end{equation*}
We have then, for $\bS$, the following important splitting theorem of the state space $\mathbb{R}^d$ based on a recurrent switching signal:

\begin{splitting}[\cite{DHX-AIM}]
Let $\bS=\{S_1,\dotsc,S_K\}\subset\mathbb{R}^{d\times d}$ satisfy condition (\ref{eq1.3a}). Then, for any recurrent switching signal $\sigma\in\varSigma_{\!K}^+$ it holds
\begin{equation*}
\mathbb{R}^{d}=E^s(\sigma)\oplus E^c(\sigma)\quad \textrm{and} \quad S_{\sigma_{\!1}}(E^{s/c}(\sigma))=E^{s/c}(\sigma(\cdot+1)).
\end{equation*}
\end{splitting}

This theorem is a special case of a more general result \cite[Theorem~B$^{\prime\prime}$]{DHX-AIM}. So in this case, if the central manifold $E^c(\sigma)=\{0\}$ then $\bS$ is stable driven by the recurrent switching signal $\sigma$. This splitting is in fact unique under the Lyapunov norm $\|\cdot\|_P$.
\subsection{Almost sure stability}\label{secI.4}
Under condition (\ref{eq1.3a}), let $\mathbb{K}_{\|\cdot\|_P}(S_k)=\{x\in\mathbb{R}^d\colon\|S_k(x)\|_P=\|x\|_P\}$ for $1\le k\le K$. We note that if $\|S_k\|_P<1$ then $\mathbb{K}_{\|\cdot\|_P}(S_k)=\{0\}$.

Next, using the above splitting theorem, we can obtain the following almost sure stability criterion:

\begin{thm-B}
Let $\bS=\{S_1,S_2\}\subset\mathbb{R}^{d\times d}$ satisfy (\ref{eq1.3a}) and $\mathbb{K}_{\|\cdot\|_P}(S_1)\cap\mathbb{K}_{\|\cdot\|_P}(S_2)=\{0\}$, where $d=2$ or $3$. Then, if $\mathbb{P}$ is a non-atomic ergodic probability measure of the one-sided Markov shift transformation$\theta\colon\varSigma_{2}^+\rightarrow\varSigma_{2}^+$ defined by $\sigma(\cdot)\mapsto\sigma(\cdot+1)$, there holds
\begin{equation*}
\|S_{\sigma_{\!n}}\cdots S_{\sigma_{\!1}}\|_P\to0\quad \textrm{as }n\to+\infty
\end{equation*}
for $\mathbb{P}$-a.e. $\sigma\in\varSigma_{2}^+$.
\end{thm-B}

We consider a simple example. Let $\bS=\{S_1,S_2\}$ with $S_1=\mathrm{diag}(\frac{1}{2},\frac{1}{2})$ and $S_2=\mathrm{diag}(1,1)$. Then, $\mathbb{K}_{\|\cdot\|_2}(S_1)=\{0\}$ and $\mathbb{K}_{\|\cdot\|_2}(S_2)=\mathbb{R}^2$, where $\|\cdot\|_2$ stands for the usual Euclidean norm. So, $\mathbb{K}_{\|\cdot\|_P}(S_1)\cap\mathbb{K}_{\|\cdot\|_P}(S_2)=\{0\}$. Clearly, $\bS$ is not absolutely stable. This shows that under the situation of Theorem~B, it is necessary to consider the almost sure stability.
\subsection{Absolute stability and finiteness property}\label{secI.5}
For absolute stability, we can obtain the following two criteria Theorems~C and D, which show the stability is decidable in the cases of $d=2,3$ under condition (\ref{eq1.3a}).

\begin{thm-C}
Let $\bS=\{S_1,S_2\}\subset\mathbb{R}^{2\times 2}$ satisfy condition (\ref{eq1.3a}). Then, $\bS$ is absolutely stable if and only if $\rho(A)<1$ for all $A\in\{S_1,S_2\}^\ell$ for $\ell=1,2$.
\end{thm-C}

\begin{thm-D}
Let $\bS=\{S_1,S_2\}\subset\mathbb{R}^{3\times 3}$ satisfy condition (\ref{eq1.3a}). Then, $\bS$ is absolutely stable if and only if $\rho(A)<1$ for all $A\in\{S_1,S_2\}^\ell$ for $\ell=1,2,3,4,5,6$, and $8$.
\end{thm-D}

On the other hand, the accurate computation of the \emph{generalized spectral radius} of $\bS$, introduced by Daubechies and Lagarias in \cite{DL-LAA92} as
\begin{equation*}
\rho(\bS)=\lim_{n\to+\infty}\max_{\sigma\in\varSigma_{\!K}^+}\sqrt[n]{\rho(S_{\sigma_{\!n}}\dotsm S_{\sigma_{\!1}})}\quad\left(~=\sup_{n\ge1}\max_{\sigma\in\varSigma_{\!K}^+}\sqrt[n]{\rho(S_{\sigma_{\!n}}\dotsm S_{\sigma_{\!1}})}\right),
\end{equation*}
is very important for many subjects. If one can find a finite-length word $(w_1,\dotsc,w_n)\in\{1,\dotsc,K\}^n$ for some $n\ge1$, which realizes $\rho(\bS)$, i.e.,
\begin{equation*}
\rho(\bS)=\sqrt[n]{\rho(S_{w_n}\dotsm S_{w_1})},
\end{equation*}
then $\bS$ is said to have the \emph{spectral finiteness property}. A brief survey for some recent progresses regarding the finiteness property can be found in \cite[$\S$1.2]{Dai-Koz}.

Under condition (\ref{eq1.3a}), we have $\rho(\bS)\le1$. If $\rho(\bS)<1$ then $\bS$ is absolutely stable; see, e.g., \cite{Gur-LAA95}. If $\rho(\bS)=1$ then $\|\cdot\|_P$ is just an extremal norm for $\bS$ (see \cite{Bar,Wir02,Dai-JMAA} for more details). In \cite{Gur-LAA95}, Gurvits proved that if $\bS$ has a \emph{polytope}\footnote{A norm $\|\cdot\|$ on $\mathbb{R}^d$ is called a (real) \emph{polytope norm}, if the unit sphere $\mathbb{S}_{\|\cdot\|}=\left\{x\in\mathbb{R}^d\colon\|x\|=1\right\}$ is a polytope in $\mathbb{R}^d$; see, e.g., \cite{Gur-LAA95}.} extremal norm on $\mathbb{R}^d$, then it has the spectral finiteness property. However, the Lyapunov norm $\|\cdot\|_P$ defined as in (\ref{eq1.3b}) does not need to be a polytope norm, for example, $P=I_d$ the identity matrix which is associated with the usual Euclidean norm $\|\cdot\|_2$ on $\mathbb{R}^d$.

As a consequence of the statements of Theorems~C and D, we can easily obtain the following spectral finiteness result.

\begin{corollary}
Let $\bS=\{S_1,S_2\}\subset\mathbb{R}^{d\times d}$ satisfy condition (\ref{eq1.3a}) with $\rho(\bS)=1$. Then the following two statements hold.
\begin{enumerate}
\item[(1)] For the case $d=2$, there follows $1=\max\left\{\rho(S_1),\rho(S_2), \sqrt{\rho(S_1S_2)}\right\}$.

\item[(2)] In the case $d=3$, there holds $1=\max\left\{\sqrt[n]{\rho(S_{w_n}\dotsm S_{w_1})}\,|\,w\in\{1,2\}^n, n=1,2,3,4,5,6,8\right\}$.
\end{enumerate}
\end{corollary}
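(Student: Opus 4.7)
The plan is to deduce the corollary as a direct consequence of Theorems~C and D, combined with the elementary fact that every finite-word spectral radius is bounded above by $\rho(\bS)$.

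First I would verify the upper bound. By the very definition of the generalized spectral radius recalled above,
\[
\sqrt[n]{\rho(S_{w_n}\dotsm S_{w_1})}\le\rho(\bS)=1
\]
for every $n\ge1$ and every word $w\in\{1,2\}^n$. Consequently, the maximum in (1) (taken over $\ell\in\{1,2\}$) and the maximum in (2) (taken over $\ell\in\{1,2,3,4,5,6,8\}$) are each bounded above by $1$.

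Next I would establish the matching lower bound by contraposing Theorems~C and D. Assume, say in case (2), that the displayed maximum were strictly less than $1$; equivalently, $\rho(A)<1$ for every $A\in\{S_1,S_2\}^\ell$ with $\ell\in\{1,2,3,4,5,6,8\}$. Since $\bS$ satisfies (\ref{eq1.3a}) by hypothesis, Theorem~D then forces $\bS$ to be absolutely stable. But absolute stability is classically known (e.g.\ \cite{Gur-LAA95}, also via the Berger--Wang formula) to imply $\rho(\bS)<1$, contradicting the standing assumption $\rho(\bS)=1$. The same contraposition applied with Theorem~C handles case (1). Hence the maximum in each case attains exactly the value $1$.

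The argument is short because its substance has been absorbed into Theorems~C and D; no genuine obstacle arises beyond those two theorems. The only point I would double-check is the classical implication ``absolute stability $\Rightarrow\rho(\bS)<1$'', which is standard. As a by-product, the word $w$ realising the maximum also realises $\rho(\bS)$ through the identity $\rho(\bS)=\sqrt[n]{\rho(S_{w_n}\dotsm S_{w_1})}$, so $\bS$ enjoys the \emph{spectral finiteness property} in dimensions $d=2$ and $d=3$ whenever condition (\ref{eq1.3a}) holds, which is exactly the claim announced just before the corollary.
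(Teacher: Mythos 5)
Your proposal is correct and follows essentially the same route as the paper: the upper bound is immediate from the definition of $\rho(\bS)$ as a supremum, and the lower bound is obtained by contradiction via Theorem~C (resp.\ Theorem~D), using the standard fact that absolute stability forces $\rho(\bS)<1$, contradicting $\rho(\bS)=1$. Nothing further is needed.
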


\begin{proof}
Let $d=2$. Assume $\max\left\{\rho(S_1),\rho(S_2), \sqrt{\rho(S_1S_2)}\right\}<1$. Then Theorem~C implies that $\bS$ is absolutely stable and so $\rho(\bS)<1$, a contradiction. Similarly, we can prove the statement in the case $d=3$.
\end{proof}
It should be pointed out that if $\rho(\bS)<1$, then $\rho(\bS)$ does not need to be attained by these maximum values defined as in the above corollary.

\subsection{Outline}
The paper is organized as follows. We shall prove Theorem~A in Section~\ref{secII}. In fact, we will prove a more general result (Theorem~\ref{thmII.3}) than Theorem~A there. Since the above Splitting Theorem is very important for the proofs of Theorems B, C, and D, we will give some notes on it in Section~\ref{secIII}. Then, Theorem~B will be proved in Section~\ref{secIV}. Section~\ref{secV} will be devoted to proving Theorems C and D. We will construct some examples in Section~\ref{secVI} to illustrate applications of our Theorems stated here. Finally, we will end this paper with some concluding remarks in Section~\ref{secVII}.

\section{Switched systems driven by nonchaotic switching signals}\label{secII}
This section is devoted to proving Theorem~A stated in Section~\ref{secI.2} under the guise of a more general result.

For any integer $2\le K<+\infty$, we recall that
a switching signal $\sigma=(\sigma_{\!n})_{n=1}^{+\infty}\in\varSigma_{\!K}^+$ is called \emph{nonchaotic}, if to any sequence $\langle n_i\rangle_{i\ge1}\nearrow+\infty$ and any $m\ge1$ there corresponds some $\delta$ with $2\le\sigma\le m+1$ such that for all $\ell_0\ge1$, there exists an $\ell\ge \ell_0$ so that $\sigma$ is constant restricted to some subinterval of $[n_\ell, n_\ell+m]$ of length greater than or equal to $\delta$. Clearly, a constant switching signal $\sigma$ with $\sigma(n)\equiv k$ is nonchaotic.

Then from definition, we can obtain the following lemma, which discovers the essential property of a nonchaotic switching signal.

\begin{lem}\label{lemII.1}
Let $\sigma=(\sigma_{\!n})_{n=1}^{+\infty}\in\varSigma_{\!K}^+$ be a nonchaotic switching signal. Then, there exists some alphabet $k\in\{1,\dotsc,K\}$ such that for any $\ell\ge1$ and any $\ell^\prime\ge1$, there exists an $n_\ell\ge\ell^\prime$ so that $\sigma_{n_\ell+1}=\dotsm=\sigma_{n_\ell+\ell}=k$.
\end{lem}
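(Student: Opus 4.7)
The natural strategy is a contradiction argument. Suppose the conclusion fails, so that for every alphabet $k \in \{1,\dotsc,K\}$ there are integers $\ell_k \ge 2$ and $\ell'_k \ge 1$ such that for every $n \ge \ell'_k$ the equality $\sigma_{n+1} = \dotsm = \sigma_{n+\ell_k} = k$ fails. Setting $L = \max_k(\ell_k - 1)$ and $N_0 = \max_k \ell'_k$, this translates to the statement that past position $N_0$ every maximal constant block of $\sigma$ has length at most $L$. In particular, $\sigma$ cannot be eventually constant.

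Since $\sigma$ is not eventually constant, there must be infinitely many \emph{transition indices}, i.e., positions $n$ with $\sigma_n \ne \sigma_{n+1}$; otherwise past the last transition $\sigma$ would have to be constant. I would then enumerate the transition indices that lie past $N_0$ as an increasing sequence $n_1 < n_2 < \dotsm$ with $n_i \to +\infty$, and test the nonchaotic property against this particular sequence together with the window-size $m = 1$.

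For $m = 1$ the definition admits only one value of $\delta$, namely $\delta = 2$. But by construction every interval $[n_\ell, n_\ell + 1] = \{n_\ell, n_\ell + 1\}$ satisfies $\sigma_{n_\ell} \ne \sigma_{n_\ell + 1}$, so no subinterval of $[n_\ell, n_\ell + 1]$ of length $\ge 2$ can be constant. Taking $\ell_0 = 1$, there is then no $\ell \ge 1$ producing a constant sub-run of length $\ge \delta = 2$ inside $[n_\ell, n_\ell + 1]$, and hence no admissible $\delta$ satisfies the condition demanded by the definition. This contradicts the assumed nonchaoticity of $\sigma$, and the lemma follows.

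The only mildly delicate point is a careful reading of the quantifier structure in the definition of \emph{nonchaotic}; once that is correctly unwound, the proof reduces to the clean observation that a nonchaotic sequence cannot simultaneously have a uniform upper bound on its constant block lengths and transition indices $\sigma_n \ne \sigma_{n+1}$ occurring arbitrarily far out. I do not foresee any deeper obstacle.
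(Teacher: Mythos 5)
Your proof is correct under the paper's stated definition of \emph{nonchaotic}, but it takes a genuinely different route from the paper's. The paper argues constructively: by pigeonhole it first extracts a sparse sequence $\langle n_i\rangle\nearrow+\infty$ with $n_{i+1}-n_i\nearrow+\infty$ along which $\sigma_{n_i}=k$ for a fixed letter $k$, then applies the nonchaotic property with $m=1$ (which forces $\delta=2$, i.e. $\sigma_{n_i}=\sigma_{n_i+1}=k$ along a further subsequence), and iterates this extraction on the shifted sequence $\langle n_i+1\rangle$ to lengthen the constant run of $k$'s one letter at a time; this yields exactly the conclusion that arbitrarily long blocks of $k$ occur arbitrarily far out. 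You instead argue by contradiction, applying the definition with $m=1$ to the sequence of transition indices $\{n\colon\sigma_n\neq\sigma_{n+1}\}$: since $\delta$ must equal $2$ and no window $[n_\ell,n_\ell+1]$ centered at a transition can carry a constant pair, a nonchaotic signal can have only finitely many transitions, so you in fact prove the strictly stronger statement that every nonchaotic signal is eventually constant, from which the lemma is immediate. Both arguments rest on the same reading of the definition (a ``subinterval of length $\ge 2$'' inside $[n_\ell,n_\ell+1]$ means the two consecutive entries coincide), which is precisely how the paper's own proof uses the case $m=1$, so your argument is sound as written. The trade-off is worth noting: your conclusion shows that the discrete definition, taken literally with the universal quantifier over all sequences $\langle n_i\rangle$, admits only eventually constant signals, whereas the paper applies the property only along specially chosen sparse sequences and so delivers (and needs) only the weaker $\omega$-limit statement of the lemma; under any weakened variant of the definition closer in spirit to the continuous-time notion of Balde and Jouan, the paper's localized argument would survive, while yours leans essentially on the literal quantification over all sequences, including the transition indices.
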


\begin{proof}
First, we can choose a sequence $\langle n_i\rangle_{i\ge1}\nearrow+\infty$ and some $k\in\{1,\dotsc,K\}$, which are such that $n_{i+1}-n_i\nearrow+\infty$ and $\sigma_{n_i}=k$ for all $i\ge1$. Now from the definition of nonchaotic property with $m=1$, it follows that we can choose a subsequence of $\langle n_i\rangle_{i\ge1}$, still write, without loss of generality, as $\langle n_i\rangle_{i\ge1}$, such that $\sigma_{n_i}=\sigma_{n_i+1}=k$ for all $i\ge1$. Repeating this procedure for $\langle n_{i}+1\rangle_{i\ge1}$ proves the statement.
\end{proof}

Lemma~\ref{lemII.1} shows that the $\omega$-limit set of a nonchaotic switching signal contains at least one constant switching signal, under the sense of the classical Markov shift transformation.

The following fact is a simple consequence of the classical Gel'fand spectral formula, which will be refined in Section~\ref{secV} for the Lyapunov norm $\|\cdot\|_P$.

\begin{lem}\label{lemII.2}
For any $A\in\mathbb{R}^{d\times d}$ and any matrix norm $\|\cdot\|$ on $\mathbb{R}^{d\times d}$, if $\rho(A)<1$ then there is an integer $N\ge1$ such that $\|A^N\|<1$.
\end{lem}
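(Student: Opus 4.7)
The plan is to invoke the Gel'fand spectral formula directly. Recall that for any matrix norm $\|\cdot\|$ on $\mathbb{R}^{d\times d}$ and any $A\in\mathbb{R}^{d\times d}$, one has
\begin{equation*}
\rho(A)=\lim_{n\to+\infty}\sqrt[n]{\|A^n\|}.
\end{equation*}
This identity does not depend on the choice of matrix norm, since all matrix norms on the finite-dimensional space $\mathbb{R}^{d\times d}$ are equivalent.

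Given the hypothesis $\rho(A)<1$, I would pick any real number $r$ with $\rho(A)<r<1$. By the Gel'fand formula above, there exists an integer $N\ge1$ such that $\sqrt[N]{\|A^N\|}<r<1$, which upon raising both sides to the $N$-th power yields $\|A^N\|<r^N<1$, as required.

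Since Gel'fand's formula is a standard textbook result, there is no obstacle to speak of; the only subtle point worth flagging in the write-up is that the norm $\|\cdot\|$ need not be submultiplicative or induced by a vector norm\,---\,but this does not matter, because norm equivalence on $\mathbb{R}^{d\times d}$ reduces the general matrix norm case to, say, the operator norm induced by the Euclidean norm, for which Gel'fand's formula is classical.
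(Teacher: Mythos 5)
Your proof is correct and follows exactly the route the paper intends: the paper states Lemma~\ref{lemII.2} as a direct consequence of the classical Gel'fand spectral formula without further detail, and your argument (choose $\rho(A)<r<1$, then take $N$ with $\sqrt[N]{\|A^N\|}<r$) is the standard way to make that deduction precise. The remark on reducing a general matrix norm to an induced norm via equivalence of norms on $\mathbb{R}^{d\times d}$ is a sound way to handle the only subtlety.
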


For $\bS=\{S_1,\dotsc,S_K\}\subset\mathbb{R}^{d\times d}$, it is said to be \emph{product bounded}, if there is a universal constant $\beta\ge1$ such that
\begin{equation*}
\|S_{\sigma_{\!n}}\cdots S_{\sigma_{\!1}}\|\le\beta\quad\forall \sigma\in\varSigma_{\!K}^+\textrm{ and }n\ge1.
\end{equation*}
This property does not depend upon the norm $\|\cdot\|$ used here.

If $\bS$ is product bounded, then one always can choose a vector norm $\|\cdot\|$ on $\mathbb{R}^d$ such that its induced operator norm $\|\cdot\|$ on $\mathbb{R}^{d\times d}$ is such that $\|S_k\|\le1$ for all $1\le k\le K$. Then the norm $\|\cdot\|$ on $\mathbb{R}^d$ acts as a Lyapunov function for $\bS$. However, there does not need to exist a common, not strict in general, ``quadratic" Lyapunov function/matrix $P$ as in (\ref{eq1.3a}). So, the following theorem is more general than Theorem~A stated in Section~\ref{secI.2}.

\begin{theorem}\label{thmII.3}
Let $\bS=\{S_1,\dotsc,S_K\}\subset\mathbb{R}^{d\times d}$ be product bounded. If $\rho(S_k)<1$ for all $1\le k\le K$, then $\bS$ is stable driven by any nonchaotic switching signals $\sigma\in\varSigma_{\!K}^+$.
\end{theorem}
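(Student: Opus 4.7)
The plan is to pass to an equivalent norm adapted to the product boundedness of $\bS$ so that each $S_k$ becomes a non-strict contraction, and then to exploit Lemma~\ref{lemII.1} together with Lemma~\ref{lemII.2} to force decay along the blocks on which a single letter is repeated. Since $\bS$ is product bounded, the excerpt itself notes that one may replace $\|\cdot\|$ by a vector norm on $\mathbb{R}^d$ whose induced operator norm satisfies $\|S_k\|\le 1$ for every $1\le k\le K$; I would work throughout with this adapted norm.

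Fix an arbitrary nonchaotic switching signal $\sigma\in\varSigma_{\!K}^+$ and set $a_n=\|S_{\sigma_{\!n}}\cdots S_{\sigma_{\!1}}\|$. Submultiplicativity of the induced operator norm together with $\|S_k\|\le 1$ forces $(a_n)$ to be non-increasing, so the limit $L=\lim_{n\to\infty}a_n$ exists in $[0,\infty)$. The goal reduces to showing $L=0$.

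By Lemma~\ref{lemII.1}, there is an alphabet $k\in\{1,\dotsc,K\}$ such that arbitrarily long constant blocks $\sigma_{n+1}=\cdots=\sigma_{n+\ell}=k$ occur arbitrarily far into the tail of $\sigma$. Since $\rho(S_k)<1$, Lemma~\ref{lemII.2} yields an integer $N\ge 1$ and a constant $\lambda:=\|S_k^N\|<1$. Applying Lemma~\ref{lemII.1} with $\ell=N$, I would pick a sequence $m_j\to\infty$ with $\sigma_{m_j+1}=\cdots=\sigma_{m_j+N}=k$, so
\begin{equation*}
a_{m_j+N}=\bigl\|S_k^N\,S_{\sigma_{m_j}}\cdots S_{\sigma_{\!1}}\bigr\|\le \lambda\,a_{m_j}.
\end{equation*}
Letting $j\to\infty$ and using $a_n\to L$ gives $L\le \lambda L$, whence $L=0$ because $\lambda<1$.

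The real content---and the main obstacle---is Lemma~\ref{lemII.1}: without the genericity assumption used by Balde and Jouan, one must extract a single letter responsible for arbitrarily long, late-appearing constant blocks of $\sigma$, rather than simply invoking that every letter recurs infinitely often. Once this structural property is in hand the argument collapses to the monotone/contraction squeeze above, and the only remaining delicacy is the initial renorming that turns $\|S_{\sigma_{\!n}}\cdots S_{\sigma_{\!1}}\|$ into a monotone scalar sequence.
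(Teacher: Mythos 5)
Your proposal is correct and follows essentially the same route as the paper: renorm via product boundedness so that each $\|S_k\|\le 1$, then combine Lemma~\ref{lemII.1} with Lemma~\ref{lemII.2} to force decay along constant blocks of a single letter. The only cosmetic difference is that you use fixed-length blocks of length $N$ together with the monotone-limit squeeze $L\le\lambda L$, whereas the paper uses arbitrarily long blocks to drop below an arbitrary $\varepsilon$; both are immediate consequences of the same two lemmas.
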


\begin{proof}
Without loss of generality, let $\|\cdot\|$ be a matrix norm on $\mathbb{R}^{d\times d}$ such that $\|S_k\|\le1$ for all $1\le k\le K$. Let $\sigma=(\sigma_{\!n})_{n=1}^{+\infty}\in\varSigma_{\!K}^+$ be an arbitrary nonchaotic switching signal. Let $k$ be given by Lemma~\ref{lemII.1}. Since $\rho(S_k)<1$, by Lemma~\ref{lemII.2} we have some $m\ge1$ such that $\|S_k^m\|<1$.

Thus, for an arbitrary $\varepsilon>0$ there is an $\ell\ge1$ such that $\|S_k^{m\ell}\|<\varepsilon$. From Lemma~\ref{lemII.1}, it follows that as $n\to+\infty$,
\begin{equation*}
\|S_{\sigma_{\!n}}\dotsm S_{\sigma_{\!1}}\|\le\|S_{\sigma_{\!n_{m\ell}+m\ell}}\dotsm S_{\sigma_{\!n_{m\ell}+1}}\|<\varepsilon.
\end{equation*}
So, $\|S_{\sigma_{\!n}}\dotsm S_{\sigma_{\!1}}\|\to0$ as $n\to+\infty$, since $\varepsilon>0$ is arbitrary. This completes the proof of Theorem~\ref{thmII.3}.
\end{proof}

Under condition (\ref{eq1.3a}), the statement of Theorem~\ref{thmII.3} will be strengthened by Corollary~\ref{corV.3} in Section~\ref{secV}.
\section{$\omega$-limit sets for product bounded systems}\label{secIII}

In this section, we will introduce $\omega$-limit sets and give some notes on our splitting theorem stated in Section~\ref{secI.3} that is very important for our arguments in the next sections.

\subsection{$\omega$-limit sets of a trajectory}

We now consider the linear inclusion (\ref{eq1.1}) generated by $\bS=\{S_1,\dotsc,S_K\}\subset\mathbb{R}^{d\times d}$ where $2\le K<+\infty$, as in Section~\ref{secI}. The classical one-sided Markov shift transformation
\begin{equation*}
\theta\colon\varSigma_{\!K}^+\rightarrow\varSigma_{\!K}^+
\end{equation*}
is defined as
\[
 \sigma=(\sigma_{\!n})_{n=1}^{+\infty}\mapsto\theta(\sigma)=(\sigma_{\!n+1})_{n=1}^{+\infty}\qquad\forall
\sigma\in\varSigma_{\!K}^+.
\]

\begin{defn}[\cite{RSD:aut,SVR:IEEE,Bal}]\label{defIII.1}
Let $x_0\in\mathbb{R}^d$ be an initial state and
$\sigma=(\sigma_{\!n})_{n=1}^{+\infty}\in\varSigma_{\!K}^+$ a
switching signal. The set of all limit points of the sequence $\langle S_{\sigma_{\!n}}\dotsm
S_{\sigma_{\!1}}(x_0)\rangle_{n=1}^{+\infty}$ in $\mathbb{R}^d$ is called the $\omega$-limit set of $\bS$ at the input $(x_0,\sigma)$. We denote it by $\omega(x_0,\sigma)$ here.
\end{defn}

It is easy to see that for any switching signal $\sigma$, the
corresponding switched system is asymptotically stable if and only
if $\omega(x_0,\sigma)=\{0\}\,\forall x_0\in\mathbb{R}^d$. Thus
we need to consider the structure of $\omega(x_0,\sigma)$ in order to
study the stability of the switched dynamics induced by $\bS$.

\begin{lem}\label{lemIII.2}
Assume $\bS$ is product bounded; that is, there is a matrix
norm  $\|\cdot\|$ on $\mathbb{R}^{d\times d}$ such that
$\|S_k\|\leq 1$ for all $1\le k\le K$.
Then, for any initial data $x_0\in\mathbb{R}^d$ and any switching
signal $\sigma$, the following two statements hold.
\begin{enumerate}
\item[(1)] The $\omega$-limit set $\omega(x_0,\sigma)$ is a compact subset contained in a
sphere $\{x\in\mathbb{R}^d;\ \|x\|=r\}$, for some $r\ge 0$.

\item[(2)] The trajectory $\langle x_n(x_0,\sigma)\rangle_{n=1}^{+\infty}$ in $\mathbb{R}^d$ tends to $0$ as $n\rightarrow\infty $ if and only if
there exists a subsequence of it which tends to $0$.
\end{enumerate}
\end{lem}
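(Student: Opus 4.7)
The plan is to exploit monotonicity of the vector norm along each orbit, which is automatic once $\|S_k\|\le 1$ for all $k$ (we may take $\|\cdot\|$ to be a vector norm on $\mathbb{R}^d$ whose induced operator norm enjoys this bound). Writing $x_n=S_{\sigma_{\!n}}\cdots S_{\sigma_{\!1}}(x_0)$, I first observe that $\|x_{n+1}\|=\|S_{\sigma_{\!n+1}}x_n\|\le\|x_n\|$, so the real sequence $(\|x_n\|)_{n\ge 0}$ is non-increasing and bounded below by $0$. Hence it converges to some $r\in[0,\|x_0\|]$. This single monotone-convergence observation is the engine of the entire proof.

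For statement (1), the orbit $\{x_n\}_{n\ge 0}$ lies inside the closed ball of radius $\|x_0\|$, which is compact in $\mathbb{R}^d$, so $\omega(x_0,\sigma)$ is automatically non-empty, closed, and bounded, hence compact. To locate it I take any $y\in\omega(x_0,\sigma)$, select a subsequence $x_{n_i}\to y$, and apply continuity of $\|\cdot\|$ to obtain $\|y\|=\lim_i\|x_{n_i}\|=r$. Thus every limit point sits on the sphere $\{x\in\mathbb{R}^d:\|x\|=r\}$, yielding the claimed inclusion.

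Statement (2) is then an immediate corollary. The forward implication is trivial. Conversely, if some subsequence satisfies $x_{n_i}\to 0$, then the identity $\|y\|=r$ just derived forces $r=0$; and since $(\|x_n\|)$ is monotone non-increasing with limit $r=0$, the full sequence tends to $0$. I do not anticipate any real obstacle here: the proof is essentially a combination of the monotone-convergence theorem with the Bolzano--Weierstrass compactness of bounded sets in $\mathbb{R}^d$, and the only mildly delicate point is the choice of a vector norm (rather than an arbitrary matrix norm) in which the contraction $\|S_k\|\le 1$ can be read as a pointwise bound $\|S_k x\|\le\|x\|$.
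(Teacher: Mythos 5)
Your proof is correct and follows essentially the same route as the paper: the non-increasing sequence $\|S_{\sigma_{\!n}}\cdots S_{\sigma_{\!1}}(x_0)\|$ converges to some $r\ge0$, every limit point lies on the sphere of radius $r$, and (2) follows since a subsequence converging to $0$ forces $r=0$. You merely spell out the compactness and norm-choice details that the paper leaves implicit.
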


\begin{proof}
Since the sequence $\langle \|S_{\sigma_{\!n}}\dotsm
S_{\sigma_{\!1}}(x_0)\|\rangle_{n=1}^{+\infty}$ is nonincreasing in $\mathbb{R}$ for any $\sigma\in\varSigma_{\!K}^+$, it is convergent as $n\rightarrow+\infty$. Denoted
 by $r$ its limit, we have the statement (1). The statement (2) follows immediately from the statement (1). This proves Lemma~\ref{lemIII.2}.
\end{proof}

In the case (2) of this lemma, we call the orbit $\langle x_n(x_0,\sigma)\rangle_{n=1}^{+\infty}$ with initial
value $x_0$ is asymptotically stable.

We note here that Lemma~\ref{lemIII.2} is actually proved in \cite{SVR:IEEE, Bal} for the continuous-time case, but \cite{Bal} is under the condition that each member of $\bS$ shares a common, not strict in general, quadratic Lyapunov function and \cite{SVR:IEEE} under an additional assumption of ``paracontraction" except the Lyapunov function. In Section~\ref{secIII.3}, we will consider the $\omega$-limit set of a matrix trajectory $\langle S_{\sigma_{\!n}}\dotsm S_{\sigma_{\!1}}\rangle_{n=1}^{+\infty}$. In addition, in the continuous-time case, $\omega(x_0,\sigma)$ is a connected set. This is an important property needed in \cite{SVR:IEEE, Bal}.

For a given switching signal, to consider the stability of the
corresponding switched system, we need to classify which kind of
initial values in $\mathbb{R}^d$ makes the corresponding orbits
asymptotically stable. It is difficult to have such classification
for a general switching signal. In the following, for the recurrent
switching signal, we have a classification result.

\subsection{Decomposition for general extremal norm}\label{secIII.2}

In this subsection, we will introduce a preliminary splitting theorem of the state space $\mathbb{R}^d$ which
plays the key in our classification.

First, we recall from \cite{NS60, Wal82} that for a topological dynamical system $T\colon\Omega\rightarrow\Omega$ on a separable metrizable space $\Omega$,
a point $w\in\Omega$ is called ``recurrent", provided that one
can find a positive integer sequence $n_i\nearrow+\infty$ such that $T^{n_i}(w)\to w$ as $i\to+\infty$.
And $w\in\Omega$ is said to be ``weakly Birkhoff recurrent" \cite{Zhou92} (also see \cite{Dai-JDE}), provided that for any $\varepsilon >0$, there exists
an integer $N_\varepsilon>1$ such that
\[
    \sum_{i=0}^{jN_\varepsilon-1}I_{\mathbf{B}(w,\varepsilon)}(T^i(w))\geq
    j\qquad\forall j\in\mathbb{N},
\]
where $I_{\mathbf{B}(w,\varepsilon)}\colon\Omega\to \{0,1\}$ is the
characteristic function of the open ball
$\mathbf{B}(w,\varepsilon)$ of radius $\varepsilon$ centered at
$w$ in $\Omega$. We denote by $R(T)$ and $W(T)$, respectively, the set of all
recurrent points and weakly Birkhoff recurrent points of $T$. It is easy to
see that $R(T)$ and $W(T)$ both are invariant under $T$ and
$W(T)\subset R(T)$.

In the qualitative theory of ordinary differential equation, this
type of recurrent point is also called a ``Poisson stable" motion,
for instance, in \cite{NS60}.

For the one-sided Markov shift $(\varSigma_{\!K}^+, \theta)$, it is
easily checked that every periodically switched signal is recurrent.
And $\sigma=(\sigma_{\!n})_{n=1}^{+\infty}\in R(\theta)$ means that there
exists a subsequence $n_i\nearrow+\infty$ such that
$\theta^{n_i}(\sigma)\to\sigma$ as $i\to+\infty$. This implies that
\begin{equation*}
S_{\sigma_{n_i+n}}\cdots S_{\sigma_{n_i+1}}\to S_{\sigma_n}\cdots S_{\sigma_1}\quad \textrm{as }i\to+\infty
\end{equation*}
for any $n\ge1$. We should note that for any two finite-length words $w\not=w^\prime$, the switching signal $\sigma=(w^\prime, w,w,w,\dotsc)$ is not recurrent.

For any function $A\colon\Omega\rightarrow\mathbb{R}^{d\times d}$, the cocycle $A_T\colon\mathbb{N}\times\Omega\rightarrow\mathbb{R}^{d\times d}$ driven by $T$ is defined as
\begin{equation*}
A_T(n,w)=A(T^{n-1}w)\cdots A(w)
\end{equation*}
for any $n\ge1$ and all $w\in\Omega$.
Now, our basic decomposition theorem can be stated as follows:

\begin{theorem}[{\cite[Theorem~B$^\prime$]{DHX-AIM}}]\label{thmIII.3}
Let $T\colon\Omega\rightarrow \Omega$
be a continuous transformation of a
separable metrizable space $\Omega$. Let
$A\colon\Omega\rightarrow\mathbb{R}^{d\times d}$ be a continuous
family of matrices with the property that there exists a norm
$\|\cdot\|$ such that
\begin{equation*}
\|A_T(n,w)\|\le 1\quad \forall n\ge1\textrm{ and
}w\in\Omega.
\end{equation*}
Then for any recurrent point $w$ of $T$, there corresponds a splitting of $\mathbb{R}^d$ into subspaces
\begin{equation*}
\mathbb{R}^d=E^s(w)\oplus E^c(w),
\end{equation*}
such that
\begin{align*}
&\lim_{n\to+\infty}\|A_T(n,w)(x)\|=0&&\forall x\in E^s(w)\\
\intertext{and} &\|A_T(n,w)(x)\|=\|x\|\;\forall n\geq
1&& \forall x\in E^c(w).
\end{align*}
\end{theorem}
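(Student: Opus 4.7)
\textbf{Proof proposal for Theorem~\ref{thmIII.3}.} The plan is to realize the splitting through an idempotent in the $\omega$-limit set of the matrix cocycle. For each $x\in\mathbb{R}^d$ the sequence $\|A_T(n,w)(x)\|$ is nonincreasing (since $\|A(T^{n-1}w)\|\le1$), so the limit $r(x):=\lim_{n\to+\infty}\|A_T(n,w)(x)\|$ exists. I work with
\begin{equation*}
E^s(w)=\{x\in\mathbb{R}^d\colon r(x)=0\},\qquad E^c(w)=\{x\in\mathbb{R}^d\colon \|A_T(n,w)(x)\|=\|x\|\ \forall n\ge1\};
\end{equation*}
$E^s(w)$ is visibly a linear subspace, while the subspace structure of $E^c(w)$ will fall out of the construction rather than from the definition.

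First I would form the compact set $\Omega(w)=\bigcap_{N\ge1}\overline{\{A_T(n,w)\colon n\ge N\}}$ inside the closed unit ball of $\mathbb{R}^{d\times d}$, which is nonempty by compactness, and prove that it is closed under matrix multiplication. Given $B_1,B_2\in\Omega(w)$ with $A_T(n_i,w)\to B_1$ and $A_T(m_j,w)\to B_2$, recurrence of $w$ lets me thin $(m_j)$ so that $T^{m_j}w\to w$; continuity of $A$ then yields $A_T(n_i,T^{m_j}w)\to A_T(n_i,w)$ as $j\to+\infty$ for every fixed $i$. Plugging this into the cocycle identity
\begin{equation*}
A_T(n_i+m_j,w)=A_T(n_i,T^{m_j}w)\cdot A_T(m_j,w)
\end{equation*}
and diagonally selecting $j_k$ for each $i_k$ produces integers $N_k=n_{i_k}+m_{j_k}\to+\infty$ along which $A_T(N_k,w)\to B_1B_2$, so $B_1B_2\in\Omega(w)$. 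Right multiplication on $\Omega(w)$ is continuous, so $\Omega(w)$ is a compact right-topological semigroup, and the Ellis--Numakura lemma furnishes an idempotent $E\in\Omega(w)$ with $E^2=E$.

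With this $E$ I decompose $\mathbb{R}^d=\ker E\oplus\operatorname{Im}E$ and fix a subsequence $A_T(n_i,w)\to E$. For $y\in\operatorname{Im}E$ one has $E(y)=y$, hence $\|y\|=\|E(y)\|=\lim_i\|A_T(n_i,w)(y)\|=r(y)$; monotonicity of $\|A_T(n,w)(y)\|$ upgrades the equality $r(y)=\|y\|$ to $\|A_T(n,w)(y)\|=\|y\|$ for every $n$, so $\operatorname{Im}E\subseteq E^c(w)$. For $y\in\ker E$, $A_T(n_i,w)(y)\to 0$, and monotonicity again forces $\|A_T(n,w)(y)\|\to0$, so $\ker E\subseteq E^s(w)$. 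The reverse inclusions are automatic: $E^s(w)\cap E^c(w)=\{0\}$, so the linear sum $E^s(w)+E^c(w)$ is direct, and it must fill $\mathbb{R}^d$ because $\ker E+\operatorname{Im}E$ already does.

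The main obstacle will be the semigroup property of $\Omega(w)$. Recurrence of $w$ is used precisely to replace the shifted factor $A_T(n_i,T^{m_j}w)$ by $A_T(n_i,w)$ in the limit; without it one has no way to promote the partial limit $A_T(n_i,w)\cdot B_2$ into a genuine $\omega$-limit of the cocycle itself, and closure of $\Omega(w)$ under multiplication fails. Everything after Ellis--Numakura is short linear algebra, but the intermediate diagonal extraction is where the theorem's hypotheses are truly spent.
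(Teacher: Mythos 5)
Your overall architecture — an idempotent in a compact limit semigroup of the cocycle, the splitting $\mathbb{R}^d=\ker E\oplus\mathrm{Im}\,E$, and the monotonicity upgrade of norms — is exactly the route behind the cited result; the paper itself records (Section~\ref{secIII.3}) that $E^s(\sigma)$ is the kernel of an idempotent matrix arising as a limit of products along a subsequence $n_i$ with $\theta^{n_i}(\sigma)\to\sigma$. However, there is a genuine gap at your key step. Given an arbitrary $B_2\in\Omega(w)$ with $A_T(m_j,w)\to B_2$, you assert that ``recurrence of $w$ lets me thin $(m_j)$ so that $T^{m_j}w\to w$.'' Recurrence only supplies \emph{some} sequence of return times $t_k$ with $T^{t_k}w\to w$; it gives no control over a prescribed sequence $m_j$, and for a typical recurrent point (say one with a dense, nontrivial orbit) most subsequences of the orbit converge to points other than $w$. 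So your argument does not establish that the full $\omega$-limit set $\Omega(w)$ is closed under multiplication — and the paper is careful on exactly this point, asserting only that $\omega(\sigma)$ \emph{contains} a nonempty compact semigroup (remark after Definition~\ref{deIII.5}), not that it is one.

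The repair is to work from the start with $\Lambda(w)$, the set of limits of $A_T(n_i,w)$ along subsequences that additionally satisfy $T^{n_i}w\to w$: it is nonempty (take a recurrence sequence and pass to a convergent subsequence in the compact unit ball of $\mathbb{R}^{d\times d}$), closed by a diagonal argument, and your cocycle computation then works verbatim for $B_1,B_2\in\Lambda(w)$, with the extra observation that $T^{n_i+m_j}w=T^{n_i}(T^{m_j}w)\to T^{n_i}w\to w$, so $B_1B_2\in\Lambda(w)$; an idempotent $E\in\Lambda(w)$ exists since matrix multiplication is jointly continuous (Ellis--Numakura is more than you need). Everything after that is fine, with one cosmetic remark: the ``reverse inclusion'' $E^c(w)\subseteq\mathrm{Im}\,E$ is neither proved nor needed. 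Monotonicity gives $\lim_{n\to+\infty}\|A_T(n,w)(x)\|=\|E(x)\|$ for every $x$, so in fact $E^s(w)=\ker E$ exactly, and you may simply declare $\mathrm{Im}\,E$ to be the central summand: the theorem only asks for \emph{some} splitting with the two stated properties.
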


Here $\|\cdot\|$ does not need to be a Lyapunov norm $\|\cdot\|_P$ as in (\ref{eq1.3b}) and further the central manifold $E^c(\sigma)$ is not necessarily unique and invariant. Although $\|A_T(n,w)|E^s(w)\|$ converges to $0$, yet $\|A_T(n,w)|E^s(w)\|$ does not need to converge exponentially fast, as is shown by \cite[Example 4.6]{DHX-AIM}.

However, under the assumptions of Theorem~\ref{thmIII.3}, if $w$ is a weakly Birkhoff recurrent point of $T$, we
have the following alternative results:

\begin{theorem}\label{thmIII.4}
Let $T\colon\Omega\rightarrow \Omega$
be a continuous transformation of a
separable metrizable space $\Omega$. Let
$A\colon\Omega\rightarrow\mathbb{R}^{d\times d}$ be a continuous
family of matrices with the property that there exists a norm
$\|\cdot\|$ such that $\|A_T(n,w)\|\le 1$ for all $n\ge1$ and
$w\in\Omega$. If $w\in\Omega$ is a weakly Birkhoff recurrent point of $T$,
Then either
\begin{align*}
&\|A_T(n,w)\|\xrightarrow[]{\textrm{exponentially
fast}}0&&\textrm{ as }n\to+\infty,\\
\intertext{or}
&\|A_T(n,T^i(w))\|=1\; \forall i\ge0&&\textrm{for
}n\ge1.
\end{align*}
\end{theorem}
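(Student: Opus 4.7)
The plan is to apply the preliminary Splitting Theorem~\ref{thmIII.3} and then, in the non-trivial case, upgrade the pointwise decay of $\|A_T(n,w)\|$ to an exponential rate by exploiting the quantitative information encoded in weak Birkhoff recurrence. Since every weakly Birkhoff recurrent point is trivially recurrent (diagonalize over $\varepsilon\to 0$ the infinite return sequences that the definition produces), Theorem~\ref{thmIII.3} furnishes the splitting $\mathbb{R}^d=E^s(w)\oplus E^c(w)$, and I would proceed by dichotomy on whether $E^c(w)=\{0\}$.

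If $E^c(w)\neq\{0\}$, choose a nonzero $x\in E^c(w)$; the identity $\|A_T(n,w)x\|=\|x\|$ forces $\|A_T(n,w)\|\ge 1$, and combined with the uniform upper bound $\|A_T(n,w)\|\le 1$ gives $\|A_T(n,w)\|=1$ for every $n\ge 1$. To extend the equality to $T^i w$, I would use the cocycle identity $A_T(n+i,w)=A_T(n,T^i w)A_T(i,w)$: the submultiplicative inequality then yields $1=\|A_T(n+i,w)\|\le\|A_T(n,T^i w)\|$, and paired with the uniform bound $\le 1$ this gives the second alternative in the statement.

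If instead $E^c(w)=\{0\}$, Theorem~\ref{thmIII.3} implies $\|A_T(n,w)\|\to 0$, so some $N\ge 1$ satisfies $\|A_T(N,w)\|<1$. By joint continuity of $A$ and $T$, there exist $\varepsilon>0$ and $\lambda\in(0,1)$ with $\|A_T(N,w')\|\le\lambda$ for every $w'\in\mathbf{B}(w,\varepsilon)$. Applying the weak Birkhoff recurrence to this $\varepsilon$ yields an integer $N_\varepsilon$ such that the ordered hit times $t_1<t_2<\cdots$ of the $T$-orbit of $w$ into $\mathbf{B}(w,\varepsilon)$ satisfy $t_j\le jN_\varepsilon-1$. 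I would then thin this sequence by setting $s_l=t_{1+(l-1)N}$; since hits occur at distinct integers, the index gap of $N$ forces $s_{l+1}-s_l\ge N$, and one still has $s_l\le((l-1)N+1)N_\varepsilon$. Splitting the cocycle along the disjoint blocks $[s_l,s_l+N-1]$ via submultiplicativity, each such block contributes a factor $\|A_T(N,T^{s_l}w)\|\le\lambda$ while the intermediate ``filler'' blocks contribute factors $\le 1$, giving $\|A_T(n,w)\|\le\lambda^{L(n)}$ with $L(n)\ge n/(NN_\varepsilon)-O(1)$; this is exponential decay with rate $\mu=\lambda^{1/(NN_\varepsilon)}<1$.

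The main technical obstacle is precisely this combinatorial extraction of $L(n)\gtrsim n$ disjoint length-$N$ windows each starting at a visit to $\mathbf{B}(w,\varepsilon)$. This is exactly where the quantitative character of weak Birkhoff recurrence is used essentially; mere recurrence (the hypothesis of Theorem~\ref{thmIII.3}) gives $\|A_T(n,w)\|\to 0$ without any rate, as witnessed by \cite[Example~4.6]{DHX-AIM}, so the dichotomy between exponential decay and identical norm $1$ along the whole orbit genuinely requires the density-in-time control on returns.
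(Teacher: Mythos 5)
Your proof is correct, but it takes a genuinely different route from the paper's. The paper disposes of the theorem in two lines by a dichotomy on the products themselves: if the second alternative fails, i.e. $\|A_T(n,T^i(w))\|<1$ for some $i\ge0$ and $n\ge1$, then $T^i(w)\in W(T)$ by invariance of $W(T)$, and the exponential decay is quoted wholesale from \cite[Theorem~2.4]{Dai-JDE} (decay at $w$ then follows from $\|A_T(m+i,w)\|\le\|A_T(m,T^i(w))\|\,\|A_T(i,w)\|$, a step left implicit); the splitting of Theorem~\ref{thmIII.3} is not used at all. You instead make the result self-contained: the case $E^c(w)\neq\{0\}$ gives $\|A_T(n,w)\|=1$, and the cocycle identity plus $\|A_T(i,w)\|\le1$ transports this equality to every $T^i(w)$, which is exactly the second alternative; in the case $E^c(w)=\{0\}$ you rederive the exponential rate directly from the definition of weak Birkhoff recurrence\,---\,continuity of $w'\mapsto A_T(N,w')$ yields a ball $\mathbf{B}(w,\varepsilon)$ on which $\|A_T(N,\cdot)\|\le\lambda<1$, the counting condition gives hit times $t_j\le jN_\varepsilon-1$, and thinning to every $N$-th hit produces at least $n/(NN_\varepsilon)-O(1)$ disjoint contraction windows, all fillers having norm at most $1$ by hypothesis. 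Your block-extraction argument is essentially the content of the cited external theorem, so what your version buys is self-containedness and an explicit display of where the density-in-time of returns (as opposed to mere recurrence, cf. \cite[Example~4.6]{DHX-AIM}) is used; what the paper's version buys is brevity, and it avoids invoking Theorem~\ref{thmIII.3} altogether, whereas you use it only to produce some $N$ with $\|A_T(N,w)\|<1$ from $E^s(w)=\mathbb{R}^d$.
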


\begin{proof}
If there exist $i\ge0$ and $n\ge1$ such that $\|A_T(n,T^i(w))\|<1$
then from $T^i(w)\in W(T)$ and \cite[Theorem~2.4]{Dai-JDE}, it follows that
\begin{equation*}
\|A_T(m,T^i(w))\|\xrightarrow[]{\textrm{exponentially
fast}}0\qquad\textrm{ as }m\to+\infty.
\end{equation*}
This completes the proof of Theorem~\ref{thmIII.4}.
\end{proof}

\subsection{Decomposition under a weak Lyapunov matrix}\label{secIII.3}%
For a recurrent switching
signal $\sigma=(\sigma_{\!n})_{n=1}^{+\infty}$ of $\bS$, to consider its stability,
it is essential to compute the stable manifold $E^s(\sigma)$. From
the proof of Theorem~\ref{thmIII.3} presented in \cite{DHX-AIM}, we know that $E^s(\sigma)$ is the
kernel of an idempotent matrix that is a limit point of
$S_{\sigma_{n_i}}\cdots S_{\sigma_{\!1}}$ with $\theta^{n_i}(\sigma)\to\sigma$ as
$i\to +\infty$.

However, in applications, it is not easy to identify which subsequence
$\langle n_i\rangle_{i\ge1}$ with this property. In this subsection, instead of
the product boundedness, we assume the more strong condition (\ref{eq1.3a}) with induced norm $\|\cdot\|_P$ on $\mathbb{R}^d$.

In this case, we can calculate the stable manifold $E^s(\sigma)$ for
any switching signal $\sigma$ (not necessarily recurrent) of $\bS$. To
do this end, we first consider the geometry of the limit sets $\omega(x_0,\sigma)$ of
$\bS$ driven by $\sigma$. For the similar results in continuous-time switched linear
systems, see \cite{Bal}.

For any switching signal $\sigma=(\sigma_{\!n})_{n=1}^{+\infty}\in\varSigma_{\!K}^+$, on the other hand,
we will consider the sequence $\langle S_{\sigma_{\!n}}\cdots S_{\sigma_{\!1}}\rangle_{n=1}^{+\infty}$
of matrices and let $\omega(\sigma)$ denote the set of all limit
points of this sequence in $\mathbb{R}^{d\times d}$.

\begin{defn}[\cite{Wir02,Bal}]\label{deIII.5}
The set $\omega(\sigma)$ is called the $\omega$-limit set of $\bS$ driven by $\sigma$, for any $\sigma\in\varSigma_{\!K}^+$.
\end{defn}

From condition (\ref{eq1.3a}), it follows immediately
that $\omega(\sigma)$ is non-empty and compact. But it may not be a
semigroup in the sense of matrix multiplication when $\sigma$ is not
a recurrent switching signal. We note that if $\sigma\in R(\theta)$ then from the proof of \cite[Theorem~4.2]{DHX-AIM}, $\omega(\sigma)$ contains a nonempty compact semigroup and so there is an idempotent element in $\omega(\sigma)$.

Parallel to Lemma~\ref{lemIII.2}, we can obtain the following result.

\begin{lem}\label{lemIII.6}
Under condition (\ref{eq1.3a}), there follows the following statements.
\begin{enumerate}
\item[(a)] For any switching signal $\sigma\in\varSigma_{\!K}^+$ of $\bS$, it holds that
\begin{equation*}
\omega(\sigma)\subset\{M\in\mathbb{R}^{d\times d}\colon \|M\|_P=r\},
\end{equation*}
for some constant $0\le r\le 1$; if $\sigma$ is further recurrent, then either $r=0$ or $1$.

\item[(b)] For any input $(x_0,\sigma)\in\mathbb{R}^d\times\varSigma_{\!K}^+$ for $\bS$, we have
\begin{equation*}
   \omega(x_0,\sigma)=\{M(x_0)\,|\,M\in\omega(\sigma)\}=\omega(\sigma)(x_0).
\end{equation*}

\item[(c)] For any two elements $M$ and $N$ in $\omega(\sigma)$,
it holds that
 \[
   M^TPM=N^TPN.
 \]
\end{enumerate}
\end{lem}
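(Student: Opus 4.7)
The plan is to exploit the pointwise contractivity of the quadratic form $\|\cdot\|_P^2$: since $P - S_k^T P S_k \ge 0$ for each $k$, for every fixed $x \in \mathbb{R}^d$ the sequence
\begin{equation*}
f_x(n) = x^T(S_{\sigma_{\!n}}\dotsm S_{\sigma_{\!1}})^T P(S_{\sigma_{\!n}}\dotsm S_{\sigma_{\!1}})x = \|S_{\sigma_{\!n}}\dotsm S_{\sigma_{\!1}}x\|_P^2
\end{equation*}
is monotone nonincreasing in $n$, hence converges to a limit $g(x)\ge 0$ that depends only on $x$ (and $\sigma$). This single observation carries almost the whole proof.

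I would prove (c) first. For any $M\in\omega(\sigma)$, pick $n_i\nearrow+\infty$ with $S_{\sigma_{\!n_i}}\dotsm S_{\sigma_{\!1}}\to M$; continuity gives $x^T M^T P M x=\lim_i f_x(n_i)=g(x)$, and this value is independent of the chosen $M$. Since a symmetric matrix is determined by its associated quadratic form, $M^T P M = N^T P N$ for any $M,N\in\omega(\sigma)$. Part (a), first statement, then follows immediately because
\begin{equation*}
\|M\|_P^2 = \sup_{x^T P x=1}x^T M^T P M x = \sup_{x^T P x = 1}g(x)
\end{equation*}
is the same for every $M\in\omega(\sigma)$; calling this common value $r^2$, the uniform bound $\|S_{\sigma_{\!n}}\dotsm S_{\sigma_{\!1}}\|_P\le 1$ confines $r$ to $[0,1]$. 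For the recurrent case, the excerpt already notes (from the proof of Theorem~4.2 of \cite{DHX-AIM}) that $\omega(\sigma)$ then contains an idempotent $E$; if $E=0$ then the just-proved constancy of $\|\cdot\|_P$ on $\omega(\sigma)$ gives $r=0$, while if $E\ne 0$ the submultiplicativity chain $\|E\|_P = \|E^2\|_P \le \|E\|_P^2$ combined with $\|E\|_P>0$ forces $\|E\|_P\ge 1$, which together with $\|E\|_P\le 1$ yields $r=1$.

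Part (b) is then a compactness argument. The bound $\|S_{\sigma_{\!n}}\dotsm S_{\sigma_{\!1}}\|_P\le 1$ confines the matrix sequence to a bounded subset of the finite-dimensional space $\mathbb{R}^{d\times d}$, hence a precompact one. The inclusion $\omega(\sigma)(x_0)\subset\omega(x_0,\sigma)$ is immediate from continuity of the evaluation $M\mapsto M(x_0)$. Conversely, given a limit point $y$ of $S_{\sigma_{\!n}}\dotsm S_{\sigma_{\!1}}(x_0)$ along some $n_i\nearrow+\infty$, precompactness lets us extract a further subsequence along which the matrices themselves converge to some $M\in\omega(\sigma)$, whence $y=M(x_0)$. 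The only step that requires genuine thought is (c), where the critical move is to notice that the pointwise-in-$x$ (rather than merely operator-norm) monotonicity of $f_x(n)$ is what pins down the entire matrix $M^T P M$; once this is isolated, the remaining items are essentially formal consequences of submultiplicativity and of compactness in $\mathbb{R}^{d\times d}$.
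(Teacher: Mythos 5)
Your proof is correct, and for parts (b), (c) and the constancy of $\|\cdot\|_P$ on $\omega(\sigma)$ it follows essentially the paper's own route: pointwise monotone convergence of $n\mapsto\|S_{\sigma_{\!n}}\dotsm S_{\sigma_{\!1}}x\|_P$ pins down the quadratic form $x\mapsto x^TM^TPMx$ independently of $M\in\omega(\sigma)$, symmetry then gives $M^TPM=N^TPN$, and precompactness of the norm-bounded matrix products yields $\omega(x_0,\sigma)=\omega(\sigma)(x_0)$. The one place where you genuinely diverge is the dichotomy $r\in\{0,1\}$ for recurrent $\sigma$: the paper deduces it from Theorem~\ref{thmIII.3}, i.e.\ the splitting $\mathbb{R}^d=E^s(\sigma)\oplus E^c(\sigma)$ (if $E^c(\sigma)\neq\{0\}$ an isometric direction forces $r=1$, otherwise all products tend to $0$ and $r=0$), whereas you use the fact, quoted in the text before the lemma from the proof of Theorem~4.2 of the cited preprint, that $\omega(\sigma)$ contains an idempotent $E$, and then conclude from $\|E\|_P=\|E^2\|_P\le\|E\|_P^2\le \|E\|_P$ that $\|E\|_P\in\{0,1\}$, which equals $r$ by the already-proved constancy. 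Both arguments lean on external results from the same reference; yours is a bit more self-contained and purely algebraic (submultiplicativity of the induced norm plus idempotency), while the paper's choice keeps the lemma tied to the splitting machinery that it reuses in the later stability proofs. Either way the step is sound, since $E\neq0$ indeed gives $\|E\|_P>0$ and the uniform bound $\|S_{\sigma_{\!n}}\dotsm S_{\sigma_{\!1}}\|_P\le1$ caps $r$ at $1$.
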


We note that the continuous-time cases of the statements (b) and (c) of Lemma~\ref{lemIII.6} have been proved in \cite[$\S$3]{Bal} using the polar decomposition of matrices. We here present a simple treatment for the sake of self-closeness.

\begin{proof}
We first note that from (\ref{eq1.3a}) and (\ref{eq1.3b}), it follows immediately that $\|S_k\|_P\le1$ for all indices $1\le k\le K$.

For the statement (b), we let $(x_0,\sigma)\in\mathbb{R}^d\times\varSigma_{\!K}^+$ be arbitrary. If $M\in\omega(\sigma)$, it is
clear that $M(x_0)\in\omega(x_0,\sigma)$. Conversely, let
$y\in\omega(x_0,\sigma)$ be arbitrary. By the definition of $\omega(x_0,\sigma)$ there exists an increasing
sequence $\{n_i\}$ such that
 \[
   y=\lim_{i\to\infty}S_{\sigma_{n_i}}\cdots S_{\sigma_{\!1}}(x_0).
 \]
The product boundedness condition implies that the sequence $\langle S_{\sigma_{n_i}}\cdots S_{\sigma_{\!1}}\rangle_{i=1}^{+\infty}$
has a convergent subsequence, whose limit element is denoted by
$M$. Thus $y=M(x_0)$.

For the statement (c) of Lemma~\ref{lemIII.6}, let $M,\ N\in\omega(\sigma)$ be arbitrary. As $\|S_k\|_P\le1$ for all $1\le k\le K$, from Lemma~\ref{lemIII.2} we
have
\begin{equation*}
   \|M(x)\|_P=\|N(x)\|_P\quad\forall x\in\mathbb{R}^d.
\end{equation*}
That is
\begin{equation*}
   x^T(M^TPM-N^TPN)x=0\quad\forall x\in\mathbb{R}^d.
\end{equation*}
It follows, from the symmetry of the matrix $M^TPM-N^TPN$, that
\begin{equation*}
    M^TPM=N^TPN.
\end{equation*}
This proves the statement (c) of Lemma~\ref{lemIII.6}.

Finally, the statement (a) of Lemma~\ref{lemIII.6} comes from the statement (c) and Theorem~\ref{thmIII.3}. In fact, let $M,\ N\in\omega(\sigma)$ be arbitrary. Then there are vectors $x,y\in\mathbb{R}^d$ such that
\begin{equation*}
\|x\|_P=\|y\|_P=1,\quad\|M\|_P=\|M(x)\|_P,\quad \textrm{and}\quad\|N\|_P=\|N(y)\|_P.
\end{equation*}
So, from (c) it follows that
\begin{equation*}
\|M\|_P=\sqrt{x^TM^TPMx}=\sqrt{x^TN^TPNx}\le\|N\|_P=\sqrt{y^TN^TPNy}=\sqrt{y^TM^TPMy}\le\|M\|_P.
\end{equation*}
This together with Theorem~\ref{thmIII.3} proves the statement (a) of Lemma~\ref{lemIII.6}.

Thus the proof of Lemma~\ref{lemIII.6} is completed.
\end{proof}

Let $M\in\omega(\sigma)$.
Then $\sqrt{M^TPM}$ is a nonnegative-definite matrix which does not
depend on the choice of the matrix $M\in\omega(\sigma)$ by the statement (c) of
Lemma~\ref{lemIII.6} and is uniquely decided by the switching signal
$\sigma$. So, we write
\begin{equation}\label{eqIII.1}
Q_\sigma=\sqrt{M^TPM}\quad\forall M\in\omega(\sigma).
\end{equation}

The continuous-time case of the following statement (1) of Proposition~\ref{propIII.7} has already been proved by Balde and Jouan~\cite[Theorem~1]{Bal} using the polar decomposition of matrices.

\begin{prop}\label{propIII.7}
Under condition (\ref{eq1.3a}), for any switching signal
$\sigma=(\sigma_{\!n})_{n=1}^{+\infty}$ of $\bS$, there hold the following two statements.
\begin{enumerate}
 \item[(1)] The switching signal $\sigma$ is asymptotically stable for $\bS$; that is,
 \[
   \lim_{n\to\infty}S_{\sigma_{\!n}}\dotsm S_{\sigma_{\!1}}(x_0)=0\quad\forall
   x_0\in\mathbb{R}^d,
 \]
 if and only if $Q_{\sigma}=0$;

\item[(2)] If $Q_{\sigma}\neq 0$, then
\begin{equation*}
\lim_{n\to+\infty}\|S_{\sigma_{\!n}}\dotsm S_{\sigma_{\!1}}(x_0)\|_P=\|Q_{\sigma}(x_0)\|_2\quad\forall x_0\in\mathbb{R}^d.
\end{equation*}
So, the stable manifold of $\bS$ at $\sigma$ is such that $E^s(\sigma)=$ kernel of $Q_{\sigma}$; that is
\begin{equation*}
   \lim_{n\to+\infty}\|S_{\sigma_{\!n}}\dotsm S_{\sigma_{\!1}}(x_0)\|_P=0\quad\forall
   x_0\in E^s(\sigma).
\end{equation*}
Here $\|\cdot\|_2$ denotes the Euclidean vector norm on $\mathbb{R}^d$.
\end{enumerate}
\end{prop}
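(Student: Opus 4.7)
The plan is to prove (2) first and then derive (1) as an immediate corollary. Fix an input $(x_0,\sigma)\in\mathbb{R}^d\times\varSigma_{\!K}^+$. Condition~(\ref{eq1.3a}) gives $\|S_k\|_P\le 1$ for every $k$, so Lemma~\ref{lemIII.2}(1), applied under the Lyapunov norm $\|\cdot\|_P$, tells us that the non-increasing sequence $\langle\|S_{\sigma_{\!n}}\dotsm S_{\sigma_{\!1}}(x_0)\|_P\rangle_{n\ge 1}$ converges to some $r(x_0)\ge 0$, and furthermore that $\omega(x_0,\sigma)$ is contained in the $\|\cdot\|_P$-sphere of radius $r(x_0)$. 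Combined with Lemma~\ref{lemIII.6}(b), which identifies $\omega(x_0,\sigma)$ with $\{M(x_0)\colon M\in\omega(\sigma)\}$, this yields the key consequence that $\|M(x_0)\|_P=r(x_0)$ for every $M\in\omega(\sigma)$.

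The heart of the proof is the identity $\|M(x_0)\|_P=\|Q_\sigma(x_0)\|_2$ for an arbitrary $M\in\omega(\sigma)$. Since $M^TPM$ is symmetric positive semidefinite, its principal square root $Q_\sigma=\sqrt{M^TPM}$ (well-defined independently of the choice of $M$ by Lemma~\ref{lemIII.6}(c)) is itself symmetric positive semidefinite with $Q_\sigma^2=M^TPM$. Consequently
\[
\|M(x_0)\|_P^2=x_0^TM^TPMx_0=x_0^TQ_\sigma^TQ_\sigma x_0=\|Q_\sigma(x_0)\|_2^2,
\]
so $r(x_0)=\|Q_\sigma(x_0)\|_2$, which is statement~(2). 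The characterization $E^s(\sigma)=\ker Q_\sigma$ is then immediate: $x_0\in E^s(\sigma)$ iff $r(x_0)=0$ iff $Q_\sigma(x_0)=0$.

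Statement~(1) follows at once from (2). If $Q_\sigma=0$ then $r(x_0)=0$ for every $x_0$, so $S_{\sigma_{\!n}}\dotsm S_{\sigma_{\!1}}(x_0)\to 0$ in $\|\cdot\|_P$, hence in every norm by finite-dimensionality. Conversely, asymptotic stability along $\sigma$ forces $r(x_0)=0$ for all $x_0$, whence $\|Q_\sigma(x_0)\|_2=0$ for all $x_0$, and so $Q_\sigma=0$. I do not anticipate any serious obstacle: the argument is essentially a polar-decomposition-style unpacking of the definition of $Q_\sigma$, and the only non-routine ingredients — the well-definedness of $Q_\sigma$ and the constancy of $\|M(x_0)\|_P$ on $\omega(\sigma)$ — are already furnished by Lemma~\ref{lemIII.6}. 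This mirrors the continuous-time treatment in \cite{Bal}, but avoids the explicit polar decomposition by working directly with the symmetric square root.
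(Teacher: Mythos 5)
Your proposal is correct and follows essentially the same route as the paper: both arguments rest on the monotone convergence of $\|S_{\sigma_{\!n}}\dotsm S_{\sigma_{\!1}}(x_0)\|_P$ together with the identification of the limit as $\|M(x_0)\|_P=\sqrt{x_0^TQ_\sigma^2x_0}=\|Q_\sigma(x_0)\|_2$ for $M\in\omega(\sigma)$, using Lemma~\ref{lemIII.6} for the well-definedness of $Q_\sigma$. Your extraction of the limit via Lemma~\ref{lemIII.6}(b) versus the paper's direct subsequence argument is only a cosmetic difference.
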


\begin{proof}
The statement (1) holds trivially from the statement (a) of Lemma~\ref{lemIII.6} or from the statement (2) to be proved soon. We next will prove the statement (2). For that, let $Q_{\sigma}\neq 0$.
For an arbitrary $x_0\in\mathbb{R}^d$, by the
definition of $Q_\sigma$ as in (\ref{eqIII.1}) there exists a subsequence
$\langle n_i\rangle_{i\ge1}$ and some $M\in\omega(\sigma)$ such that
\[
\lim_{i\to+\infty}\|S_{\sigma_{n_i}}\cdots S_{\sigma_{\!1}}(x_0)\|_P=\|M(x_0)\|_P=\sqrt{x_0^TQ_\sigma^2x_0}=\sqrt{x_0^TQ_\sigma^TQ_\sigma x_0}=\|Q_\sigma(x_0)\|_2.
\]
Therefore, by (\ref{eq1.3a}) we have
\[
 \lim_{n\to+\infty}\|S_{\sigma_{\!n}}\dotsm S_{\sigma_{\!1}}(x_0)\|_P=\|Q_{\sigma}(x_0)\|_2.
\]
This thus completes the proof of Proposition~\ref{propIII.7}.
\end{proof}

We note here that if $Q_\sigma$ is idempotent, then from Proposition~\ref{propIII.7} we have $E^c(\sigma)=\mathrm{Im}(Q_\sigma)$ and $\mathbb{R}^d=E^s(\sigma)\oplus E^c(\sigma)$. Because in general there lacks the recurrence of $\sigma$, one cannot define a central manifold $E^c(\sigma)$ satisfying $\mathbb{R}^d=E^s(\sigma)\oplus E^c(\sigma)$
as done in Theorem~\ref{thmIII.3}. However, we will establish another type of splitting theorem in the case $d=2$ for $\bS$ driven by a general switching signal, not necessarily recurrent.

For that, we first introduce several notations for the sake of our convenience. For any given $A\in\mathbb{R}^{d\times d}$ and any vector norm $\|\cdot\|$ on $\mathbb{R}^d$, write
\begin{equation}\label{eqIII.2}
\|A\|_{\mathrm{co}}=\min\left\{\|A(x)\|\colon x\in\mathbb{R}^d\textrm{ with }\|x\|=1\right\},
\end{equation}
called the \emph{co-norm} (also \emph{minimum norm} in some literature) of $A$ under $\|\cdot\|$.

\begin{defn}\label{defnIII.8}
Under condition (\ref{eq1.3a}), for any switching signal $\sigma\in\varSigma_{\!K}^+$ the numbers
\begin{equation*}
r_E(\sigma)^{}:=\|M\|_P\quad\textrm{and}\quad r_I(\sigma)^{}:=\|M\|_{P,\mathrm{co}},
\end{equation*}
for $M\in\omega(\sigma)$, are called the $\omega$-exterior and $\omega$-interior radii of $\bS$ driven by $\sigma$, respectively.
\end{defn}

According to the statement (c) of Lemma~\ref{lemIII.6}, $r_E(\sigma)^{}$ and $r_I(\sigma)^{}$ both are well defined independent of the choice of $M$.

Motivated by $\mathbb{E}^c(\sigma)$ in \cite[$\S$5.2.2]{Dai-JDE} and by $\mathcal{V}_i$ in \cite[Lemma~1]{Bal}, for any given $A\in\mathbb{R}^{d\times d}$ and any vector norm $\|\cdot\|$ on $\mathbb{R}^d$, let
\begin{subequations}\label{eqIII.3}
\begin{align}
&\mathbb{K}_{\|\cdot\|}(A)=\left\{x\in\mathbb{R}^d\colon\|A(x)\|=\|A\|\cdot\|x\|\right\}\\
\intertext{and}&\mathbb{K}_{\|\cdot\|_{\mathrm{co}}}(A)=\left\{x\in\mathbb{R}^d\colon\|A(x)\|=\|A\|_{\mathrm{co}}\cdot\|x\|\right\}.
\end{align}
\end{subequations}
Clearly, if $\ker(A)\not=\{0\}$, then $\|A\|_{\mathrm{co}}=0$ and so $\mathbb{K}_{\|\cdot\|_{\mathrm{co}}}(A)=\ker(A)$ in this case.

For a general norm $\|\cdot\|$ on $\mathbb{R}^d$, $\mathbb{K}_{\|\cdot\|_{\mathrm{co}}}(A)$ and $\mathbb{K}_{\|\cdot\|}(A)$ are not necessarily linear subspaces. However, for a Lyapunov norm, we can obtain the following.

\begin{lem}\label{lemIII.9}
Under the Lyapunov norm $\|\cdot\|_P$ as in (\ref{eq1.3b}), there $\mathbb{K}_{\|\cdot\|_{P,\mathrm{co}}}(A)$ and $\mathbb{K}_{\|\cdot\|_P}(A)$ both are linear subspaces of $\mathbb{R}^d$ for any $A\in\mathbb{R}^{d\times d}$.
\end{lem}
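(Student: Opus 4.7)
The plan is to exploit the fact that the Lyapunov norm $\|\cdot\|_P$ is induced by the inner product $\langle x,y\rangle_P := x^TPy$, and to reduce the claim to the spectral theorem for a self-adjoint operator on the Hilbert space $(\mathbb{R}^d,\langle\cdot,\cdot\rangle_P)$.

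First, I would introduce the $P$-adjoint $A^\dagger$ of $A$, characterized by $\langle Ax,y\rangle_P=\langle x,A^\dagger y\rangle_P$ for all $x,y$; a direct computation from $(Ax)^TPy=x^TPA^\dagger y$ gives $A^\dagger=P^{-1}A^TP$. The composition $B:=A^\dagger A$ is then self-adjoint with respect to $\langle\cdot,\cdot\rangle_P$ and positive semi-definite, since $\langle Bx,x\rangle_P=\|Ax\|_P^2\ge 0$. Applying the spectral theorem in the Hilbert space $(\mathbb{R}^d,\langle\cdot,\cdot\rangle_P)$, I obtain a $\langle\cdot,\cdot\rangle_P$-orthonormal basis $e_1,\dots,e_d$ with $Be_i=\mu_i e_i$ and $\mu_1\ge\mu_2\ge\dots\ge\mu_d\ge 0$.

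Next, expanding an arbitrary $x=\sum_i c_ie_i$, one reads off $\|x\|_P^2=\sum_i c_i^2$ and
\begin{equation*}
\|Ax\|_P^2=\langle Bx,x\rangle_P=\sum_{i=1}^d\mu_i c_i^2.
\end{equation*}
From this identity it is immediate that $\|A\|_P^2=\mu_1$ and $\|A\|_{P,\mathrm{co}}^2=\mu_d$. Then the equality $\|Ax\|_P=\|A\|_P\cdot\|x\|_P$ rewrites as $\sum_i(\mu_1-\mu_i)c_i^2=0$; since every summand is non-negative, this forces $c_i=0$ whenever $\mu_i<\mu_1$. Hence
\begin{equation*}
\mathbb{K}_{\|\cdot\|_P}(A)=\mathrm{span}\{e_i:\mu_i=\mu_1\},
\end{equation*}
which is manifestly a linear subspace. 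The argument for $\mathbb{K}_{\|\cdot\|_{P,\mathrm{co}}}(A)$ is entirely parallel, yielding $\mathrm{span}\{e_i:\mu_i=\mu_d\}$; when $A$ is singular this reduces to $\ker A$, consistent with the remark following (3.3b).

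There is no real obstacle here; the only point that deserves care is verifying that $A^\dagger A$ is genuinely self-adjoint with respect to $\langle\cdot,\cdot\rangle_P$ (not the standard Euclidean inner product), so that the spectral theorem applies and furnishes a $P$-orthonormal eigenbasis. Once this has been done, linearity of the two sets is a transparent consequence of the diagonalization.
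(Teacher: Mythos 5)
Your argument is correct, and it takes a somewhat different route from the paper's own proof. The paper argues more directly: it observes that $\|A\|_P^2P-A^TPA$ (resp.\ $A^TPA-\|A\|_{P,\mathrm{co}}^2P$) is a symmetric positive semidefinite matrix, writes it as $G^2$ with $G$ symmetric, and notes that $x\in\mathbb{K}_{\|\cdot\|_P}(A)$ (resp.\ $x\in\mathbb{K}_{\|\cdot\|_{P,\mathrm{co}}}(A)$) if and only if $x^TG^2x=\|Gx\|_2^2=0$, i.e.\ $x\in\ker G$; linearity is then immediate and no diagonalization is needed. Your route through the $P$-adjoint and the spectral theorem for $B=A^\dagger A=P^{-1}A^TPA$ is a legitimate alternative that proves more: it identifies $\|A\|_P^2$ and $\|A\|_{P,\mathrm{co}}^2$ as the extreme eigenvalues $\mu_1,\mu_d$ of $B$ and exhibits the two sets explicitly as the corresponding eigenspaces (equivalently, $\ker(\mu_1P-A^TPA)$ and $\ker(A^TPA-\mu_dP)$ in the paper's Euclidean language), a concrete description that is convenient when these spaces are actually computed, as in the paper's examples. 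The trade-off is that the paper's square-root trick is shorter and only uses the elementary fact that the zero set of a positive semidefinite quadratic form is the kernel of its square root, which is the common core of both arguments. Incidentally, your computation carries the correct constant $\|A\|_P^2$; the displayed equivalences in the paper's proof write $\|A\|_P$ where $\|A\|_P^2$ (and likewise $\|A\|_{P,\mathrm{co}}^2$) is meant, so your version is in fact the cleaner statement of the same reduction.
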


\begin{proof}
Let $A\in\mathbb{R}^{d\times d}$ be arbitrarily given. By definitions, we have
\begin{equation*}
\begin{split}
 x\in\mathbb{K}_{\|\cdot\|_P}(A)&\Leftrightarrow x^T\|A\|_PPx-x^TA^TPAx=0\\
 &\Leftrightarrow x^T(\|A\|_PP-A^TPA)x=0\\
 &\Leftrightarrow \|G(x)\|_2=0\\
 &\Leftrightarrow x\in \ker(G).
\end{split}
\end{equation*}
Here $G^2=\|A\|_PP-A^TPA\ge0$ is symmetric. Since $\ker(G)$, the kernel of $x\mapsto Gx$, is a linear
subspace of $\mathbb{R}^d$, $\mathbb{K}_{\|\cdot\|_P}$ is also a linear subspace of $\mathbb{R}^d$.

On the other hand, for any $x\in\mathbb{R}^d$ we have $\|A(x)\|_P\ge\|A\|_{P,\mathrm{co}}\cdot\|x\|_P$. So,
\begin{equation*}
x^T(A^TPA-\|A\|_{P,\mathrm{co}}P)x\ge0\quad\forall x\in\mathbb{R}^d.
\end{equation*}
Let $H^2=A^TPA-\|A\|_{P,\mathrm{co}}P$, which is symmetric and nonnegative-definite. Then it holds that $\mathbb{K}_{\|\cdot\|_{P,\mathrm{co}}}(A)=\ker(H)$, a linear subspace.

Thus, the proof of Lemma~\ref{lemIII.9} is completed.
\end{proof}

Now, the improved splitting theorem can be stated as follows:

\begin{theorem}\label{thmIII.10}
Let $\bS=\{S_1,\dotsc,S_K\}\subset\mathbb{R}^{2\times 2}$ satisfy condition (\ref{eq1.3a}). Then, for any switching signal $\sigma\in\varSigma_{\!K}^+$, not necessarily recurrent, there exists a splitting of $\mathbb{R}^2$ into subspaces
\begin{equation*}
\mathbb{R}^2=\mathbb{K}_{\|\cdot\|_{P,\mathrm{co}}}(\sigma)\oplus\mathbb{K}_{\|\cdot\|_{P}}(\sigma)
\end{equation*}
such that
\begin{align*}
&\lim_{n\to+\infty}\|S_{\sigma_{\!n}}\dotsm S_{\sigma_{\!1}}(x_0)\|_P=r_I^{}\|x_0\|_P&&\forall x_0\in\mathbb{K}_{\|\cdot\|_{P,\mathrm{co}}}(\sigma),\\
&\lim_{n\to+\infty}\|S_{\sigma_{\!n}}\dotsm S_{\sigma_{\!1}}(x_0)\|_P=r_E^{}\|x_0\|_P&&\forall x_0\in\mathbb{K}_{\|\cdot\|_{P}}(\sigma),\\
\intertext{and}
&r_I\|x_0\|_P<\lim_{n\to+\infty}\|S_{\sigma_{\!n}}\dotsm S_{\sigma_{\!1}}(x_0)\|_P<r_E^{}\|x_0\|_P&&\forall x_0\in\mathbb{R}^2-\mathbb{K}_{\|\cdot\|_{P,\mathrm{co}}}(\sigma)\cup\mathbb{K}_{\|\cdot\|_{P}}(\sigma).
\end{align*}
\end{theorem}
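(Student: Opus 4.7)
My plan is to identify the splitting with the $P$-orthogonal eigenspaces of $M^TPM$ relative to $P$, for any $M\in\omega(\sigma)$. Fix such an $M$; by Lemma~\ref{lemIII.6}(c) the matrix $M^TPM$ does not depend on the particular choice, so
\[
\mathbb{K}_{\|\cdot\|_{P}}(\sigma):=\mathbb{K}_{\|\cdot\|_{P}}(M),\qquad \mathbb{K}_{\|\cdot\|_{P,\mathrm{co}}}(\sigma):=\mathbb{K}_{\|\cdot\|_{P,\mathrm{co}}}(M)
\]
are intrinsic to $\sigma$ and, by Lemma~\ref{lemIII.9}, are linear subspaces of $\mathbb{R}^2$.

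Next I would diagonalize the generalized eigenvalue problem $M^TPMx=\lambda Px$. Since $P^{-1}M^TPM$ is self-adjoint with respect to the $P$-inner product $\langle x,y\rangle_P=x^TPy$, it admits a $P$-orthogonal eigenbasis $\{v_1,v_2\}$ with eigenvalues $\lambda_1\ge\lambda_2\ge 0$. The variational characterization of these eigenvalues as the extrema of the Rayleigh quotient $x^TM^TPMx/x^TPx$ identifies $\lambda_1=r_E^2$ and $\lambda_2=r_I^2$. Using the standard fact that for a symmetric matrix $A\le 0$, $x^TAx=0$ forces $Ax=0$, applied to $A=M^TPM-r_E^2P$ and to $A=r_I^2P-M^TPM$, I would conclude
\[
\mathbb{K}_{\|\cdot\|_{P}}(\sigma)=\ker(M^TPM-r_E^2 P),\qquad \mathbb{K}_{\|\cdot\|_{P,\mathrm{co}}}(\sigma)=\ker(M^TPM-r_I^2 P),
\]
i.e.\ these two sets coincide with the $\lambda_1$- and $\lambda_2$-eigenspaces themselves.

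In the generic case $r_I<r_E$, the two eigenspaces are one-dimensional and $P$-orthogonal, hence $\mathbb{R}^2=\mathbb{K}_{\|\cdot\|_{P,\mathrm{co}}}(\sigma)\oplus\mathbb{K}_{\|\cdot\|_{P}}(\sigma)$. By condition (\ref{eq1.3a}) the sequence $\|S_{\sigma_n}\cdots S_{\sigma_1}(x_0)\|_P$ is non-increasing, hence convergent, and Lemma~\ref{lemIII.6}(b)--(c) identifies its limit with $\|M(x_0)\|_P$ for every $M\in\omega(\sigma)$. This immediately gives the limits $r_E\|x_0\|_P$ on $\mathbb{K}_{\|\cdot\|_{P}}(\sigma)$ and $r_I\|x_0\|_P$ on $\mathbb{K}_{\|\cdot\|_{P,\mathrm{co}}}(\sigma)$. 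For $x_0=av_1+bv_2$ with $ab\ne 0$, the $P$-orthogonality of $v_1,v_2$ yields
\[
\|M(x_0)\|_P^2=a^2r_E^2\|v_1\|_P^2+b^2r_I^2\|v_2\|_P^2,\qquad \|x_0\|_P^2=a^2\|v_1\|_P^2+b^2\|v_2\|_P^2,
\]
and since $r_I<r_E$ with $a,b\neq 0$ the strict separation $r_I\|x_0\|_P<\|M(x_0)\|_P<r_E\|x_0\|_P$ follows.

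The main delicate point I anticipate is the degenerate case $r_E=r_I$, where $M^TPM=r_E^2P$ and both subspaces fill all of $\mathbb{R}^2$; here the third alternative is vacuous and the direct-sum statement must be read as a trivial decomposition. The restriction to $d=2$ enters precisely through this dimension count: two distinct simple eigenvalues in $\mathbb{R}^2$ automatically yield a direct-sum splitting into two one-dimensional subspaces, whereas in higher dimension intermediate $P$-singular values would give a third eigenspace obstructing such a clean two-piece decomposition.
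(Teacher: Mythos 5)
Your proposal is correct and follows essentially the same route as the paper: both define $\mathbb{K}_{\|\cdot\|_{P,\mathrm{co}}}(\sigma)$ and $\mathbb{K}_{\|\cdot\|_{P}}(\sigma)$ as $\mathbb{K}_{\|\cdot\|_{P,\mathrm{co}}}(M)$ and $\mathbb{K}_{\|\cdot\|_{P}}(M)$ for an $\omega$-limit matrix $M$, use the $M$-independence of $M^TPM$ (Lemma~\ref{lemIII.6}(c)/Proposition~\ref{propIII.7}) and the linearity of these sets (your explicit generalized-eigenvalue argument is exactly what Lemma~\ref{lemIII.9} encapsulates), and identify the limits via monotonicity of $\|S_{\sigma_{\!n}}\dotsm S_{\sigma_{\!1}}(x_0)\|_P$. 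Your write-up is in fact somewhat more complete, since you verify the three limit clauses (including the strict inequalities via the $P$-orthogonal eigenbasis) and flag the degenerate case $r_I=r_E$, which the paper's proof treats only in passing.
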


\begin{proof}
Let $r_I<r_E$ and $M\in\omega(\sigma)$. Define $\mathbb{K}_{\|\cdot\|_{P,\mathrm{co}}}(\sigma)=\mathbb{K}_{\|\cdot\|_{P,\mathrm{co}}}(M)$ and $\mathbb{K}_{\|\cdot\|_{P}}(\sigma)=\mathbb{K}_{\|\cdot\|_{P}}(M)$. From the statement (2) of Proposition~\ref{propIII.7}, it follows that
$\mathbb{K}_{\|\cdot\|_{P,\mathrm{co}}}(\sigma)$ and $\mathbb{K}_{\|\cdot\|_{P}}(\sigma)$ both are independent of the choice of $M$. So, $\mathbb{R}^2=\mathbb{K}_{\|\cdot\|_{P,\mathrm{co}}}(\sigma)\oplus\mathbb{K}_{\|\cdot\|_{P}}(\sigma)$ from Lemma~\ref{lemIII.9}. We note that if $r_I=r_E$, then $\mathbb{K}_{\|\cdot\|_{P,\mathrm{co}}}(\sigma)=\mathbb{K}_{\|\cdot\|_{P}}(\sigma)=\mathbb{R}^2$. This completes the proof of Theorem~\ref{thmIII.10}.
\end{proof}

In the case where $\sigma$ is recurrent, one can easily see that
\begin{equation*}
E^s(\sigma)=\mathbb{K}_{\|\cdot\|_{P,\mathrm{co}}}(\sigma)\quad\textrm{and}\quad E^c(\sigma)=\mathbb{K}_{\|\cdot\|_{P}}(\sigma).
\end{equation*}
\section{Asymptotical stability under a weak Lyapunov matrix}\label{secIV}

In this section, we will discuss the stability of switched linear system
with a common, but not necessarily strict, quadratic Lyapunov function. In this case, a criteria for stability is
derived without computing the limit matrix $Q_{\sigma}$ as in (\ref{eqIII.1}). We still assume $\bS$ is composed
of finitely many subsystems. That is, $\bS=\{S_1,\dotsc, S_K\}$
with $2\le K<+\infty$.

\subsection{Stability of generic recurrent switching signals}\label{secIV.1}
Now for $\sigma=(\sigma_{\!n})_{n=1}^{+\infty}\in\varSigma_{\!K}^+$, if $\mathrm{Card}\{n\,|\,\sigma_{\!n}=k\}=\infty$ for all $1\le k\le K$ then $\sigma$ is called ``generic." Recall that a switching signal $\sigma=(\sigma_{\!n})_{n=1}^{+\infty}\in\varSigma_{\!K}^+$ is said to be \emph{stable} for $\bS$ if
\[
  \|S_{\sigma_{\!n}}\cdots S_{\sigma_{\!1}}\|\rightarrow 0\quad \textrm{as }n\to+\infty.
\]
(Note that the stability is independent of the chosen norm $\|\cdot\|$.)
As is known, a switching system which is asymptotically stable for
all periodically switching signals does not need to be
asymptotically stable for all switching signals in general~\cite{BM,BTV,Koz07,HMST}. However we
can obtain the following result.

\begin{lem}\label{lemIV.1}
If all recurrent switching signals are stable for
$\bS$, then it is asymptotically stable driven by all switching
signals in $\varSigma_{\!K}^+$.
\end{lem}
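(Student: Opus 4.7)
My plan is a proof by contradiction that leverages the compactness of $\varSigma_{\!K}^+$ together with an extraction of a minimal $\theta$-invariant set inside the shift-orbit closure of any putatively non-stable signal. Suppose, toward a contradiction, that some $\sigma=(\sigma_{\!n})_{n=1}^{+\infty}\in\varSigma_{\!K}^+$ is not stable for $\bS$. Under condition (\ref{eq1.3a}) the sequence $\|S_{\sigma_{\!n}}\cdots S_{\sigma_{\!1}}\|_P$ is nonincreasing, and Proposition~\ref{propIII.7} then yields $r_E(\sigma)>0$; in particular $\omega(\sigma)$ contains some matrix of $P$-norm $r_E(\sigma)$.

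The goal is now to locate inside $\omega_\theta(\sigma):=\bigcap_{N\ge1}\overline{\{\theta^n(\sigma):n\ge N\}}\subset\varSigma_{\!K}^+$ a recurrent switching signal $\sigma^\ast$ at which the cocycle is still of unit $P$-norm. Since $\varSigma_{\!K}^+$ is compact metrizable and $\theta$ is continuous, $\omega_\theta(\sigma)$ is a nonempty compact $\theta$-invariant set, so by a standard Zorn's-lemma argument it contains a nonempty minimal closed $\theta$-invariant subset $\mathcal{M}$; every point of $\mathcal{M}$ is (uniformly) recurrent under $\theta$, hence in particular recurrent in the sense of Section~\ref{secI.3}. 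Fix $\sigma^\ast\in\mathcal{M}$; passing to subsequences and using the product boundedness provided by (\ref{eq1.3a}), I pick $n_i\nearrow+\infty$ such that simultaneously $\theta^{n_i}(\sigma)\to\sigma^\ast$ in $\varSigma_{\!K}^+$ and $S_{\sigma_{\!n_i}}\cdots S_{\sigma_{\!1}}\to M$ in $\mathbb{R}^{d\times d}$. Then $M\in\omega(\sigma)$ and, by Lemma~\ref{lemIII.6}(a), $\|M\|_P=r_E(\sigma)>0$.

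For any fixed $m\ge1$, once $i$ is so large that the first $m$ symbols of $\theta^{n_i}(\sigma)$ agree with those of $\sigma^\ast$, the decomposition
\begin{equation*}
S_{\sigma_{\!n_i+m}}\cdots S_{\sigma_{\!1}}=\bigl(S_{\sigma^\ast_{\!m}}\cdots S_{\sigma^\ast_{\!1}}\bigr)\bigl(S_{\sigma_{\!n_i}}\cdots S_{\sigma_{\!1}}\bigr)\longrightarrow\bigl(S_{\sigma^\ast_{\!m}}\cdots S_{\sigma^\ast_{\!1}}\bigr)M
\end{equation*}
holds, while the left side has $P$-norm converging to $r_E(\sigma)=\|M\|_P$. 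Hence $\|(S_{\sigma^\ast_{\!m}}\cdots S_{\sigma^\ast_{\!1}})M\|_P=\|M\|_P$; submultiplicativity together with $\|M\|_P>0$ forces $\|S_{\sigma^\ast_{\!m}}\cdots S_{\sigma^\ast_{\!1}}\|_P\ge 1$, and (\ref{eq1.3a}) gives the reverse inequality, so $\|S_{\sigma^\ast_{\!m}}\cdots S_{\sigma^\ast_{\!1}}\|_P=1$ for every $m\ge 1$. Thus $\sigma^\ast$ is a recurrent switching signal that fails to be stable for $\bS$, contradicting the hypothesis and closing the argument.

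The principal obstacle is conceptual rather than computational: one must produce a $\sigma^\ast$ that is \emph{genuinely} recurrent in the precise sense of Section~\ref{secI.3} (not merely an arbitrary limit point of $\{\theta^n(\sigma)\}_{n\ge1}$) while simultaneously preserving non-shrinkage of the $P$-cocycle norm. Both requirements are met by coupling a minimal-set extraction in $\varSigma_{\!K}^+$ with a diagonal subsequence in the $\|\cdot\|_P$-unit ball of $\mathbb{R}^{d\times d}$; once these two compactness arguments are synchronized, the contradiction drops out from submultiplicativity alone.
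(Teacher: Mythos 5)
Your argument is correct, but it proves the lemma by a genuinely different route than the paper. The paper's proof is a one-liner in ergodic language: by the Poincar\'e recurrence theorem the set $R(\theta)$ of recurrent signals has full measure for every $\theta$-invariant (ergodic) measure on $\varSigma_{\!K}^+$, so stability on $R(\theta)$ gives stability on a set of total measure one, and then the cited result \cite[Lemma~2.3]{DHX-ERA} converts almost-sure stability with respect to all ergodic measures into absolute stability. You instead argue purely topologically: assuming some $\sigma$ is unstable, the nonincreasing $P$-norms converge to $r>0$; you extract a minimal $\theta$-invariant subset of the shift $\omega$-limit set (Zorn plus compactness), whose points are uniformly recurrent and hence recurrent in the sense of Section~\ref{secI.3}, and by synchronizing $\theta^{n_i}(\sigma)\to\sigma^\ast$ with convergence of the partial products to some $M$ with $\|M\|_P=r$, the identity $\|(S_{\sigma^\ast_{\!m}}\dotsm S_{\sigma^\ast_{\!1}})M\|_P=r$ together with submultiplicativity and $\|S_k\|_P\le1$ forces $\|S_{\sigma^\ast_{\!m}}\dotsm S_{\sigma^\ast_{\!1}}\|_P=1$ for all $m$, contradicting stability of the recurrent signal $\sigma^\ast$. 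What your approach buys is self-containedness: it avoids measure theory and the external lemma entirely, and it uses condition (\ref{eq1.3a}) only through product boundedness (existence of a submultiplicative norm with $\|S_k\|\le1$), so it applies verbatim in that more general setting; what the paper's approach buys is brevity and a direct bridge to the ergodic-theoretic framework (total measure one, almost-sure stability) used elsewhere in Section~\ref{secIV}. One cosmetic remark: your appeal to Proposition~\ref{propIII.7} to get $r_E(\sigma)>0$ is not needed, since monotonicity of $\|S_{\sigma_{\!n}}\dotsm S_{\sigma_{\!1}}\|_P$ already identifies its positive limit with $r_E(\sigma)$ via Lemma~\ref{lemIII.6}; this does not affect correctness.
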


\begin{proof}
Since the set $R(\theta)$ of all recurrent switching signals has
full measure $1$ for all ergodic measures with respect to
$(\varSigma_{\!K}^+,\theta)$, the result follows from \cite[Lemma~2.3]{DHX-ERA}.
\end{proof}

By Lemma~\ref{lemIV.1}, to obtain the asymptotic stability of $\bS$,
it suffices to prove that it is only asymptotically stable driven by all recurrent
switching signals.

In addition, we need the following lemma.

\begin{lem}\label{lemIV.2}
Under condition (\ref{eq1.3a}), if $\|S_k\|_P=1$ and $\mathbb{K}_{\|\cdot\|_P}(S_k)$ is $S_k$-invariant, then $\rho(S_k)=1$.
\end{lem}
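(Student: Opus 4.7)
Write $A=S_k$, $V=\mathbb{K}_{\|\cdot\|_P}(A)$, and work throughout with the inner product $\langle x,y\rangle_P=x^TPy$ and its associated norm $\|\cdot\|_P$. The plan is to exhibit $V$ as an $A$-invariant subspace on which $A$ is an isometry, deduce that the restricted operator $A|_V$ has all eigenvalues on the unit circle, and then read off $\rho(A)=1$.

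First, I would verify that $V\neq\{0\}$. Since $\|A\|_P=1$ and the unit $\|\cdot\|_P$-sphere is compact, the supremum defining $\|A\|_P$ is attained, so there exists $x$ with $\|x\|_P=1$ and $\|Ax\|_P=1=\|A\|_P\cdot\|x\|_P$, giving $x\in V\setminus\{0\}$. By Lemma~\ref{lemIII.9}, $V$ is a linear subspace of $\mathbb{R}^d$, and by hypothesis $A(V)\subseteq V$, so $A|_V$ is a well-defined linear endomorphism of the nontrivial finite-dimensional space $V$.

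Next, I would use the defining property of $V$: for every $x\in V$ one has $\|Ax\|_P=\|A\|_P\cdot\|x\|_P=\|x\|_P$, so $A|_V$ is a $\|\cdot\|_P$-isometry of $V$. Since $V$ is finite-dimensional, $A|_V$ is then injective, hence bijective, and its complexification is a unitary operator on $(V_{\mathbb{C}},\langle\cdot,\cdot\rangle_P)$. Consequently every eigenvalue of $A|_V$ has modulus $1$, so $\rho(A|_V)=1$. Any eigenvalue of $A|_V$ is an eigenvalue of $A$, whence $\rho(A)\ge\rho(A|_V)=1$. Combined with the standard inequality $\rho(A)\le\|A\|_P=1$ (which follows from Gel'fand's formula together with $\|A^n\|_P\le\|A\|_P^n=1$), this gives $\rho(A)=1$, as desired.

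The argument is essentially routine once one observes the two structural facts: that $V$ is a linear subspace (already established in Lemma~\ref{lemIII.9}) and that $V\neq\{0\}$ because $\|A\|_P=1$ is attained. The only conceptual point that might need care is invoking the finite-dimensionality of $V$ to upgrade ``isometry'' to ``bijective isometry,'' which is what forces the spectrum to lie on the unit circle; without the assumed $A$-invariance of $V$ this step would fail, so that hypothesis is used exactly once and in an essential way.
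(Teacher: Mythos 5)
Your proof is correct and follows essentially the same route as the paper: the paper's entire proof is the one-line remark that the claim ``comes obviously from Lemma~\ref{lemIII.9}'', and your argument is precisely the natural expansion of that remark, using Lemma~\ref{lemIII.9} to make $\mathbb{K}_{\|\cdot\|_P}(S_k)$ a nontrivial $S_k$-invariant subspace on which $S_k$ acts as a $\|\cdot\|_P$-isometry, forcing an eigenvalue of modulus $1$, and combining this with $\rho(S_k)\le\|S_k\|_P=1$.
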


Here $\mathbb{K}_{\|\cdot\|_P}(S_k)$ is defined as in (\ref{eqIII.3}).

\begin{proof}
The statement comes obviously from Lemma~\ref{lemIII.9}.
\end{proof}

In the following, for simplicity, we just consider a switched system
which is composed of two subsystems. That is, $K=2$.

\begin{lem}\label{lemIV.3}
Under condition (\ref{eq1.3a}) with $K=2$ (i.e., $\bS=\{S_1,S_2\}$), if $\|S_1\|_P=\|S_2\|_P=1$ and
\begin{equation}\label{eqIV.1}
\mathbb{K}_{\|\cdot\|_P}(S_1)\cap\mathbb{K}_{\|\cdot\|_P}(S_2)=\{0\},
\end{equation}
and at least one of them is invariant (i.e., $S_1(\mathbb{K}_{\|\cdot\|_P}(S_1))=\mathbb{K}_{\|\cdot\|_P}(S_1)$ or $S_2(\mathbb{K}_{\|\cdot\|_P}(S_2))=\mathbb{K}_{\|\cdot\|_P}(S_2)$),
then every generic switching signal is stable for $\bS$.
\end{lem}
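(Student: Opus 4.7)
The plan is to fix an arbitrary generic switching signal $\sigma=(\sigma_{\!n})_{n=1}^{+\infty}\in\varSigma_{\!2}^+$ and an arbitrary $x_0\in\mathbb{R}^d$ with $\|x_0\|_P=1$, and to show that $\|x_n\|_P\to 0$, where $x_n=S_{\sigma_{\!n}}\dotsm S_{\sigma_{\!1}}(x_0)$. Since $\|S_k\|_P\le 1$ for $k=1,2$, the sequence $r_n:=\|x_n\|_P$ is nonincreasing and hence converges to some limit $r\ge 0$; I aim to rule out $r>0$ by contradiction.

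So I assume $r>0$ and set $y_n=x_n/r_n$, a unit vector in $(\mathbb{R}^d,\|\cdot\|_P)$. Without loss of generality suppose that it is $\mathbb{K}_{\|\cdot\|_P}(S_1)$ that is $S_1$-invariant. As $\sigma$ is generic, both $1$ and $2$ occur infinitely often in $\sigma$, so the index set $\mathcal{N}:=\{n\ge 0:\sigma_{\!n+1}=1,\,\sigma_{\!n+2}=2\}$ of ``$1\!\to\!2$'' transitions is infinite. Using compactness of the unit $P$-sphere, I extract a subsequence $n_k\in\mathcal{N}$ with $y_{n_k}\to y$ and $\|y\|_P=1$.

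The contradiction then unfolds in two steps. First, $\|S_1(y_{n_k})\|_P=r_{n_k+1}/r_{n_k}\to 1$ yields $\|S_1(y)\|_P=1=\|y\|_P$, so $y\in\mathbb{K}_{\|\cdot\|_P}(S_1)$, and the invariance hypothesis then gives $S_1(y)\in\mathbb{K}_{\|\cdot\|_P}(S_1)$. Second, $y_{n_k+1}=(r_{n_k}/r_{n_k+1})\,S_1(y_{n_k})\to S_1(y)$ and $\|S_2(y_{n_k+1})\|_P=r_{n_k+2}/r_{n_k+1}\to 1$ together force $\|S_2(S_1(y))\|_P=1=\|S_1(y)\|_P$, so $S_1(y)\in\mathbb{K}_{\|\cdot\|_P}(S_2)$. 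Combined with (\ref{eqIV.1}), this gives $S_1(y)=0$, which contradicts $\|S_1(y)\|_P=\|y\|_P=1$; hence $r=0$.

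The main obstacle, in my view, is selecting the correct two-step pattern to exploit. A one-step approach, separating the subsequences with $\sigma_{\!n+1}=1$ and $\sigma_{\!n+1}=2$, does not immediately give a contradiction because the two families of accumulation points may be distinct vectors lying in their respective subspaces. The hypothesis $S_1(\mathbb{K}_{\|\cdot\|_P}(S_1))=\mathbb{K}_{\|\cdot\|_P}(S_1)$ is exactly what bridges the gap: it transports a single accumulation point $y$ one step forward while keeping it inside $\mathbb{K}_{\|\cdot\|_P}(S_1)$, and then the ``$1\!\to\!2$'' transition pins $S_1(y)$ into $\mathbb{K}_{\|\cdot\|_P}(S_2)$ as well. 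I would also note that this argument is purely local to a single orbit and never invokes recurrence of $\sigma$, so neither the Splitting Theorem nor Lemma~\ref{lemIV.1} is needed here; the non-generic signals are correctly excluded because for them the dynamics can remain entirely inside a single $\mathbb{K}_{\|\cdot\|_P}(S_k)$ subspace.
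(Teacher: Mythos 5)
Your proof is correct and follows essentially the same route as the paper: both extract a convergent (sub)sequence of states at the infinitely many ``$1\to2$'' transitions, use the monotone convergence of $\|x_n\|_P$ to place the limit $y$ in $\mathbb{K}_{\|\cdot\|_P}(S_1)$ and $S_1(y)$ in $\mathbb{K}_{\|\cdot\|_P}(S_2)$, and then invoke the $S_1$-invariance together with the trivial intersection (\ref{eqIV.1}) to force $S_1(y)=0$. Your normalization to unit vectors and explicit contradiction with $r>0$ is only a cosmetic variant of the paper's use of Lemma~\ref{lemIII.2}.
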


\begin{proof}
Assume that $\mathbb{K}_{\|\cdot\|_P}(S_1)$ is $S_1$-invariant. (Otherwise, if $\mathbb{K}_{\|\cdot\|_P}(S_2)$ is $S_2$-invariant,
the proof is the same.) Let $\sigma=(\sigma_{\!n})_{n=1}^{+\infty}$ be a generic
switching signal; that is, in $(\sigma_{\!n})_{n=1}^{+\infty}$, both $1$ and $2$ appear infinitely many
times. Then there exists a subsequence $\{\sigma_{n_i}\}$ such that
\[
 \sigma_{n_i}=1\quad\textrm{and}\quad \sigma_{n_i+1}=2\quad\forall i\ge1.
\]

For a given initial value $x_0\in\mathbb{R}^d$, consider the
subsequence $\{S_{\sigma_{n_i-1}}\dotsm S_{\sigma_{\!1}}(x_0)\}_{i=1}^{+\infty}$. By
the assumption (\ref{eq1.3a}), it has a convergent subsequence in
$\mathbb{R}^d$. Without loss of generality, we assume that
\[
  S_{\sigma_{n_i-1}}\cdots S_{\sigma_{\!1}}(x_0)\rightarrow y\in\mathbb{R}^d\quad \textrm{as }
  i\rightarrow +\infty.
\]
Thus
\begin{equation*}
S_{\sigma_{n_i}}S_{\sigma_{n_i-1}}\dotsm S_{\sigma_{\!1}}(x_0)\to S_1(y),
\end{equation*}
\begin{equation*}
S_{\sigma_{n_i+1}}S_{\sigma_{n_i}}S_{\sigma_{n_i-1}}\dotsm S_{\sigma_{\!1}}(x_0)\to S_2S_1(y),
\end{equation*}
as $i\to+\infty$.
By the statement (1) of Lemma~\ref{lemIII.2}, we have
\begin{equation*}
   \|S_2S_1(y)\|_P=\|S_1(y)\|_P=\|y\|_P.
\end{equation*}
Thus $y\in\mathbb{K}_{\|\cdot\|_P}(S_1)$ and $S_1(y)\in\mathbb{K}_{\|\cdot\|_P}(S_2)$. From the
$S_1$-invariance of $\mathbb{K}_{\|\cdot\|_P}(S_1)$ it follows that
\[
   S_1(y)\in \mathbb{K}_{\|\cdot\|_P}(S_1)\cap\mathbb{K}_{\|\cdot\|_P}(S_2).
\]
So $S_1(y)=0$ and so is $y$. From the statement (2) of Lemma~\ref{lemIII.2}, we have
\[
 S_{\sigma_{\!n}}\cdots S_{\sigma_{\!1}}(x_0)\to0\quad \textrm{as }
  n\to+\infty.
\]
That is, $\sigma$ is a stable switching signal for $\bS$. This proves Lemma~\ref{lemIV.3}.
\end{proof}

Both $\mathbb{E}^c(\sigma)$ in \cite[$\S$5.2.2]{Dai-JDE} and $\mathcal{V}_i$ in \cite[Lemma~1]{Bal} are invariant. Unfortunately, here our subspace $\mathbb{K}_{\|\cdot\|_P}(S_k)$ does not need to be
$S_k$-invariant in general. See Example~\ref{exaVI.2} in
Section~\ref{secVI}. If this is the case, we still have, however, the following criterion.

\begin{theorem}\label{thmIV.4}
Under conditions (\ref{eq1.3a}) and (\ref{eqIV.1}) with $\bS=\{S_1,S_2\}\subset\mathbb{R}^{d\times d}$, the following two statements hold.
\begin{enumerate}
\item[(1)] If $d=2$, then all generic recurrent switching signals $\sigma\in\varSigma_2^+$, which satisfy
\begin{equation*}
\sigma\not=(\widehat{1,2},\widehat{1,2},\dotsc),
\end{equation*}
are stable for $\bS$;

\item[(2)] if $d=3$, then all generic recurrent switching signals $\sigma\in\varSigma_2^+$ such that
\begin{equation*}
\sigma\not=(w,w,w,\dotsc),\qquad \textrm{where }
w\in\{(1,2), (2,1), (1,2,2), (2,1,1)\},
\end{equation*}
are stable for $\bS$.
\end{enumerate}
\end{theorem}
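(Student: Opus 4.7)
The plan is to analyse the central manifold $E^c(\sigma)$ provided by the Splitting Theorem and to exploit condition~(\ref{eqIV.1}) together with Lemma~\ref{lemIII.9} to severely restrict the form of any generic recurrent signal $\sigma$ that fails to be stable. Assume for contradiction that $\sigma$ is generic and recurrent but not stable. The Splitting Theorem gives $\mathbb{R}^d=E^s(\sigma)\oplus E^c(\sigma)$ with $E^c(\sigma)\neq\{0\}$, and $S_{\sigma_1}\colon E^c(\sigma)\to E^c(\theta\sigma)$ is a $P$-isometric bijection, so $\dim E^c(\theta^n\sigma)$ is constant in $n$. If $\|S_i\|_P<1$ for some $i$, then $K_i:=\mathbb{K}_{\|\cdot\|_P}(S_i)=\{0\}$ and genericity allows the contraction $\|S_i\|_P$ to accumulate infinitely often, forcing stability; hence $\|S_1\|_P=\|S_2\|_P=1$. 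By Lemma~\ref{lemIII.9} and compactness each $K_i$ is a nontrivial subspace of $\mathbb{R}^d$, while $\dim K_i=d$ would make $S_i$ a $P$-isometry and give $K_j=K_i\cap K_j=\{0\}$, contradicting the nontriviality of $K_j$. For any nonzero $x\in E^c(\sigma)$ the orbit $y_n:=S_{\sigma_n}\dotsm S_{\sigma_1}(x)$ satisfies $\|y_n\|_P=\|x\|_P$ and $y_n\in K_{\sigma_{n+1}}$ for all $n\geq0$; together with $E^c(\theta^n\sigma)\subset K_{\sigma_{n+1}}$ and genericity, this forces $\dim E^c(\sigma)\leq\min(\dim K_1,\dim K_2)$.

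When $\dim K_1=\dim K_2=1$ (the whole case $d=2$, and the symmetric subcase of $d=3$), necessarily $\dim E^c(\sigma)=1$ and $E^c(\theta^n\sigma)=K_{\sigma_{n+1}}$ for every $n$. Fix $P$-unit vectors $v_i\in K_i$ and rescale to $\|x\|_P=1$; the orbit then satisfies $y_n=\pm v_{\sigma_{n+1}}$ and $S_{\sigma_{n+1}}(v_{\sigma_{n+1}})=\pm v_{\sigma_{n+2}}$. Since $S_i(v_i)$ is a fixed vector independent of $n$, the successor of each symbol $i$ appearing in $\sigma$ is dictated by a fixed map $\tau\colon\{1,2\}\to\{1,2\}$; genericity forbids $\tau$ from fixing any symbol that appears in $\sigma$, forcing $\tau(1)=2$ and $\tau(2)=1$, so $\sigma$ is the $2$-periodic alternating signal, which is precisely the excluded one in these cases.

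For $d=3$ with $\dim K_1=1$ and $\dim K_2=2$ (the other subcase being symmetric by swapping indices), consecutive $1$'s in $\sigma$ would combine with an occurrence of the subword $12$ (which genericity provides) to place $S_1(v_1)\in K_1\cap K_2=\{0\}$, a contradiction; hence every $1$ in $\sigma$ is isolated and $\sigma$ decomposes as $\ldots,1,2^{b_1},1,2^{b_2},\ldots$ Tracking the orbit through one such block yields $S_2^{b_i}S_1(v_1)=\pm v_1$; since $v_1\notin K_2$ gives $\|S_2^k(v_1)\|_P<\|v_1\|_P$ for all $k\geq1$, two distinct block lengths would yield $S_2^{|b_i-b_j|}(v_1)=\pm v_1$, a contradiction, so all blocks share a common length $b\geq1$. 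If $b\geq3$, the vectors $S_1(v_1)$ and $S_2S_1(v_1)$ both lie in the plane $K_2$: either they span $K_2$, whence $S_2(K_2)=K_2$ and then $S_2^bS_1(v_1)\in K_2$ contradicts $\pm v_1\in K_1$, or they are proportional, so $S_2^bS_1(v_1)=\pm S_1(v_1)\in K_2$ gives the same contradiction. Therefore $b\in\{1,2\}$, producing the periodic signals with repeating block $(1,2)$ or $(1,2,2)$; the symmetric subcase yields repeating blocks $(2,1)$ or $(2,1,1)$.

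The main obstacle is the $d=3$ asymmetric case above: one must control the action of the $P$-isometry $S_2|_{K_2}$ on the plane $K_2$ and rule out block lengths $b\geq3$ by combining a rank argument inside $K_2$ with the incompatibility between $S_2$-invariance of $K_2$ and $K_1\cap K_2=\{0\}$.
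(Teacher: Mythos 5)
Your proposal is correct and follows essentially the same route as the paper: reduce to $\|S_1\|_P=\|S_2\|_P=1$, take a nonzero vector of the central space $E^c(\sigma)$ supplied by the splitting theorem, note that its orbit stays on the unit $P$-sphere and therefore lies in $\mathbb{K}_{\|\cdot\|_P}(S_{\sigma_{n+1}})$ at every step, and use $\mathbb{K}_{\|\cdot\|_P}(S_1)\cap\mathbb{K}_{\|\cdot\|_P}(S_2)=\{0\}$ together with the dimension count (and the same spanning-versus-proportional dichotomy inside the two-dimensional $\mathbb{K}_{\|\cdot\|_P}(S_2)$) to force $\sigma$ into one of the excluded periodic patterns. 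Your bookkeeping differs only mildly from the paper's (a successor-map argument in the case both subspaces are lines, and the equal-block-length argument with $b\le 2$ in place of the paper's exclusion of the subwords $11$, $121$, $1221$ and its appeal to Lemma~\ref{lemIV.3}), and, exactly like the paper's own proof, what is actually established is stability of every generic recurrent signal none of whose shifts is one of the listed periodic signals.
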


\begin{proof}
First, if $\|S_1\|_P<1$ or $\|S_2\|_P<1$, then every generic switching signal is stable for $\bS$ and hence the statements (1) and (2) trivially hold. So, we next assume $\|S_1\|_P=\|S_2\|_P=1$. This implies that $\dim\mathbb{K}_{\|\cdot\|_P}(S_k)\ge1$ for $k=1,2$.

For the statement (1) of Theorem~\ref{thmIV.4}, from (\ref{eqIV.1}) it follows that $\dim\mathbb{K}_{\|\cdot\|_P}(S_k)=1$ for $k=1,2$.
Let $\sigma=(\sigma_{\!n})_{n=1}^{+\infty}$ be a given generic recurrent switching
signal such that
\begin{equation}\label{eqIV.2}
\sigma(\cdot+n)\not=(\widehat{1,2},\widehat{1,2},\dotsc,\widehat{1,2},\dotsc)\quad\forall n\ge1.
\end{equation}
From Theorem~\ref{thmIII.3}, there corresponds a splitting of
$\mathbb{R}^2$ into subspaces
\begin{equation*}
\mathbb{R}^2=E^s(\sigma)\oplus E^c(\sigma),
\end{equation*}
such that
\begin{align*}
&\lim_{n\to+\infty}\|S_{\sigma_{\!n}}\cdots S_{\sigma_{\!1}}(x_0)\|_P=0&&\forall x_0\in E^s(\sigma)\\
\intertext{and} & \|S_{\sigma_{\!n}}\cdots S_{\sigma_{\!1}}(x_0)\|_P=\|x_0\|_P\;\forall
n\geq 1&& \forall x\in E^c(\sigma).
\end{align*}
To prove that $\sigma$ is a stable switching signal for $\bS$, we need to
prove that $E^c(\sigma)=\{0\}$. By the genericity of $\sigma$ and
(\ref{eqIV.2}), $\sigma$ must contains the word $(1,1,2)$ or $(2,2,1)$.
Without loss of generality, we assume that
\[
(\sigma_{\!1},\sigma_{\!2},\sigma_{\!3})=(1,1,2).
\]
Thus we have
\[
 \|S_2S_1S_1(x_0)\|_P=\|S_1S_1(x_0)\|_P=\|S_1(x_0)\|_P=\|x_0\|_P\quad\forall
 x_0\in E^c(\sigma)
\]
These imply that
\[
\{x_0,\ S_1(x_0)\}\subset\mathbb{K}_{\|\cdot\|_P}(S_1),\quad S_1S_1(x_0)\in\mathbb{K}_{\|\cdot\|_P}(S_2).
\]
Suppose that $x_0\neq 0$. It follows from $\dim\mathbb{K}_{\|\cdot\|_P}(S_1)=1$ that
there exists a real number $\lambda$ with $|\lambda|=1$ such that
\[
 S_1(x_0)=\lambda x_0.
\]
This means that $x_0$ is an eigenvector of $S_1$ with eigenvalue
$\lambda$. So
\[
S_1S_1(x_0)=\lambda^2x_0\in \mathbb{K}_{\|\cdot\|_P}(S_1).
\]
Therefore $S_1S_1(x_0)\in\mathbb{K}_{\|\cdot\|_P}(S_1)\cap\mathbb{K}_{\|\cdot\|_P}(S_2)=\{0\}$. Thus we
have $S_1S_1(x_0)=0$, which implies $x_0=0$, a contradiction.

Next, for proving the statement (2) of Theorem~\ref{thmIV.4} that $d=3$, by (\ref{eqIV.1}), we have that one of $\mathbb{K}_{\|\cdot\|_P}(S_1), \mathbb{K}_{\|\cdot\|_P}(S_2)$ has dimension $1$ and the other has dimension at least $1$ and at most $2$.

If both $\mathbb{K}_{\|\cdot\|_P}(S_1)$ and
$\mathbb{K}_{\|\cdot\|_P}(S_2)$ have dimension $1$, then by the same argument as in
the statement (1), all generic recurrent switching signals satisfying (\ref{eqIV.2})
are stable for $\bS$.

Next, we assume that, for example,
\[
 \dim\mathbb{K}_{\|\cdot\|_P}(S_1)=1\quad\textrm{and}\quad \dim\mathbb{K}_{\|\cdot\|_P}(S_2)=2.
\]
We claim that for any generic recurrent switching signal $\sigma=(\sigma_{\!n})_{n=1}^{+\infty}\in\varSigma_2^+$, if
\begin{equation}\label{eqIV.3}
\sigma(\cdot+n)\not\in\left\{(\widehat{1,2},\widehat{1,2},\dotsc,\widehat{1,2},\dotsc),(\widehat{1,2,2},\widehat{1,2,2},\dotsc,\widehat{1,2,2},\dotsc)\right\}\quad\forall n\ge1.
\end{equation}
then $\sigma$ is stable for $\bS$. There is no loss of generality in assuming $\sigma_{\!1}=1$; otherwise replacing $\sigma$ by $\sigma(\cdot+n)$ for some $n\ge1$. Then,
\begin{equation*}
\mathbb{K}_{\|\cdot\|_P}(S_1)=E^c(\sigma)\quad \textrm{if }E^c(\sigma)\not=\{0\},
\end{equation*}
where $E^c(\sigma)$ is given by Theorem~\ref{thmIII.3}.

Whenever the word $11$ appears in the sequence $(\sigma_{\!n})_{n=1}^{+\infty}$, $\mathbb{K}_{\|\cdot\|_P}(S_1)$ is $S_1$-invariant. Then, Lemma~\ref{lemIV.3} follows that $\sigma$ is stable for $\bS$. Next, we assume $11$ does not appear in $(\sigma_{\!n})_{n=1}^{+\infty}$. If $121$ appears in $(\sigma_{\!n})_{n=1}^{+\infty}$ then $\widehat{12}\widehat{12}\widehat{12}\dotsm$ must appear too, a contradiction. So, $121$ cannot appear in $(\sigma_{\!n})_{n=1}^{+\infty}$. Then $122$ must appear. If $1221$ appears in $(\sigma_{\!n})_{n=1}^{+\infty}$ then $\widehat{122}\widehat{122}\widehat{122}\dotsm$ must appear too, a contradiction. Thus, the word $1222$ must appear in $(\sigma_{\!n})_{n=1}^{+\infty}$.

When $\sigma$ contains the word $(2,2,2,1)$, assume that, for example,
\[
    (\sigma_{\!{n+1}},\sigma_{\!{n+2}},\sigma_{\!{n+3}},\sigma_{\!{n+4}})=(2,2,2,1).
\]
Then we have
\[
 \|S_1S_2S_2S_2(x_0)\|_P=\|S_2S_2S_2(x_0)\|_P=\|S_2S_2(x_0)\|_P=\|S_2(x_0)\|_P=\|x_0\|_P\quad\forall
 x_0\in E^c(\sigma(\cdot+n)),
\]
which show that for all $x_0\in E^c(\sigma(\cdot+n))$,
\[
\{x_0, S_2(x_0), S_2S_2(x_0)\}\subset\mathbb{K}_{\|\cdot\|_P}(S_2),\quad
 S_2S_2S_2(x_0)\in\mathbb{K}_{\|\cdot\|_P}(S_1).
\]
If $x_0$ and $S_2(x_0)$ are linear dependent, that is,
\[
 S_2(x_0)=\lambda x_0,
\]
for some $\lambda$ with $|\lambda|=1$, then
$S_2S_2S_2(x_0)=\lambda^3x_0\in\mathbb{K}_{\|\cdot\|_P}(S_2)$. So
\[
 S_2S_2S_2(x_0)\in \mathbb{K}_{\|\cdot\|_P}(S_1)\cap\mathbb{K}_{\|\cdot\|_P}(S_2)=\{0\},
\]
which implies that $x_0=0$. On the other hand ,if $x_0$ and $S_2(x_0)$ are linear independent, then
\[
 S_2S_2(x_0)=\lambda x_0+\alpha S_2(x_0),
\]
for some $\lambda$ and $\alpha$, since $\dim\mathbb{K}_{\|\cdot\|_P}(S_2)=2$. Thus
$S_2S_2S_2(x_0)$ is a linear combination of $S_2(x_0)$ and $S_2S_2(x_0)$. So it
is also in $\mathbb{K}_{\|\cdot\|_P}(S_2)$. Therefore
\[
S_2S_2S_2(x_0)\in\mathbb{K}_{\|\cdot\|_P}(S_1) \cap\mathbb{K}_{\|\cdot\|_P}(S_2)=\{0\},
\]
which shows $x_0=0$. Thus $E^c(\sigma(\cdot+n))=\{0\}$ and then $E^c(\sigma)=\{0\}$.

Similarly, when $\dim\mathbb{K}_{\|\cdot\|_P}(S_1)=2$ and $\dim\mathbb{K}_{\|\cdot\|_P}(S_2)=1$, we
can prove that all generic recurrent switching signals, but the following
four periodic switching signals
\[
(1,1,1,\dotsc),\ (2,2,2,\dotsc),\  (\widehat{2,1},\widehat{2,1},\dotsc),
\ (\widehat{2,1,1},\widehat{2,1,1},\dotsc),
\]
are stable for $\bS$.

This completes the proof of Theorem~\ref{thmIV.4}.
\end{proof}

We have the following remarks on Theorem~\ref{thmIV.4}.

\begin{remark}
Similarly, we can consider a switched linear system composed of two
subsystems on $\mathbb{R}^d$ with $d\geq 4$. In this case, under the
assumptions (\ref{eq1.3a}) and (\ref{eqIV.1}), if either
$\mathbb{K}_{\|\cdot\|_P}(S_1)$ or $\mathbb{K}_{\|\cdot\|_P}(S_2)$ has dimension $1$, then all
generic recurrent switching signals but finitely many periodic signals are
stable for $\bS$.
\end{remark}

\begin{remark}\label{remIV.2}
Under the assumptions on Theorem~\ref{thmIV.4}, in order to obtain
the stability for all recurrent switching signals, we just need to
check finitely many periodic signals to see whether they are stable for $\bS$.
\end{remark}

\begin{remark}
Theorem~\ref{thmIV.4} suggests a easy computable sufficient condition
of asymptotically stable for switched linear systems which are
composed of two subsystems. In fact, Remark~\ref{remIV.2} provides a
direct way to check the stability of all recurrent signals, which
implies the asymptotically stable of the systems by
Lemma~\ref{lemIV.1}.
\end{remark}

We can also discuss the stability of switched linear systems
composed of finite many subsystems similarly. But it is troublesome
to formulate the corresponding assumptions. Here we will give an
example to illustrate such conditions in Section~\ref{secVI}.

\subsection{Almost sure stability}\label{secIV.2}
Let $(\varSigma_{\!K}^+, \mathscr{B})$ be the Borel $\sigma$-field of the space $\varSigma_{\!K}^+$ and then the one-sided Markov shift map $\theta\colon\sigma(\cdot)\mapsto\sigma(\cdot+1)$ is measurable.
A Borel probability measure $\mathbb{P}$ on $\varSigma_{\!K}^+$ is said to be
\emph{$\theta$-invariant}, if $\mathbb{P}=\mathbb{P}\circ \theta^{-1}$, i.e.
$\mathbb{P}(B)=\mathbb{P}(\theta^{-1}(B))$ for all $B\in\mathscr{B}$. A
$\theta$-invariant probability measure $\mathbb{P}$ is called
\emph{$\theta$-ergodic}, provided that for
$B\in\mathscr{B}$, $\mathbb{P}\left((B\setminus
\theta^{-1}(B))\cup(\theta^{-1}(B)\setminus B)\right)=0$ implies $\mathbb{P}(B)=1$ or $0$.

An ergodic measure $\mathbb{P}$ is called \emph{non-atomic}, if every singleton set $\{\sigma\}$ has $\mathbb{P}$-measure $0$.

Using Theorem~\ref{thmIV.4}, we can easily prove Theorem~B stated in Section~\ref{secI.4}.

\begin{proof}[Proof of Theorem~B]
Let $\mathbb{P}$ be an arbitrary non-atomic $\theta$-ergodic measure on $\varSigma_{2}^+$. Then from the Poincar\'{e} recurrence theorem (see, e.g., \cite[Theorem~1.4]{Wal82}), it follows that $\mathbb{P}$-a.e. $\sigma\in\varSigma_{2}^+$ are recurrent. In addition, sine $\mathbb{P}$ is non-atomic, we obtain that $\mathbb{P}$-a.e. $\sigma\in\varSigma_{2}^+$ are non-periodic and generic. This completes the proof of Theorem~B from Theorem~\ref{thmIV.4}.
\end{proof}

We note that in the proof of Theorem~B presented above, the deduction of the genericity of $\sigma$ needs the assumption $K=2$.
\section[Absolute stability of a pair of matrices]{Absolute stability of a pair of matrices with a weak Lyapunov matrix}\label{secV}
We now deal with the case $\bS=\{S_1,S_2\}\subset\mathbb{R}^{d\times d}$, where $S_1$ and $S_2$ both are stable and share
a common, but not necessarily strict, quadratic Lyapunov function. For any $A\in\mathbb{R}^{d\times d}$, we denote by $\rho(A)$ the spectral radius of $A$.

Our first absolute stability result Theorem~C is restated as follows:

\begin{theorem}\label{thm5.1}
Let $\bS=\{S_1,S_2\}\subset\mathbb{R}^{2\times 2}$ satisfy condition (\ref{eq1.3a}). Then, $\bS$ is absolutely stable (i.e., $\|S_{\sigma_{\!n}}\dotsc S_{\sigma_{\!1}}\|\to0$ as $n\to+\infty$, for all switching signals $\sigma\in\varSigma_{2}^+$) if and only if there holds that $\rho(S_1)<1, \rho(S_2)<1$, and $\rho(S_1S_2)<1$.
\end{theorem}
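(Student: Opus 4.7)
The $(\Rightarrow)$ direction is immediate: testing absolute stability on the constant signals $\sigma\equiv k$ and on the periodic signal $(\widehat{1,2},\widehat{1,2},\ldots)$ forces $\|S_k^n\|\to 0$ and $\|(S_2S_1)^n\|\to 0$, whence $\rho(S_k)<1$ for $k=1,2$ and $\rho(S_1S_2)=\rho(S_2S_1)<1$. So my focus is on sufficiency.

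By Lemma~\ref{lemIV.1} it is enough to show that every recurrent $\sigma\in R(\theta)\subset\varSigma_2^+$ is stable for $\bS$. For such a $\sigma$ the Splitting Theorem of Section~\ref{secI.3} gives $\mathbb{R}^2=E^s(\sigma)\oplus E^c(\sigma)$ together with the equivariance $S_{\sigma_n}\cdots S_{\sigma_1}(E^c(\sigma))=E^c(\theta^n\sigma)$ for $n\ge 1$, and stability of $\sigma$ amounts to $E^c(\sigma)=\{0\}$. The plan is to rule out $\dim E^c(\sigma)\in\{1,2\}$ using only the three spectral-radius hypotheses and the ``common norm-preserving directions'' $\mathbb{K}_{\|\cdot\|_P}(S_k)$.

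The preliminary observation is that $\rho(S_k)<1$ forbids $S_k$ from being a $P$-isometry, so $\dim\mathbb{K}_{\|\cdot\|_P}(S_k)\le 1$ for $k=1,2$. This at once rules out $\dim E^c(\sigma)=2$, which would make $S_{\sigma_1}$ a $P$-isometry of $\mathbb{R}^2$. Assuming $\dim E^c(\sigma)=1$, each $V_n:=E^c(\theta^n\sigma)$ is a $1$-dimensional subspace on which $S_{\sigma_{n+1}}$ acts $P$-isometrically, so $V_n\subset\mathbb{K}_{\|\cdot\|_P}(S_{\sigma_{n+1}})$; since both sides have dimension $1$, equality holds, and equivariance yields the ``transfer identity''
\begin{equation*}
S_{\sigma_{n+1}}\bigl(\mathbb{K}_{\|\cdot\|_P}(S_{\sigma_{n+1}})\bigr)=\mathbb{K}_{\|\cdot\|_P}(S_{\sigma_{n+2}})\qquad\forall n\ge 0.
\end{equation*}

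Finally, I dispose of $\dim E^c(\sigma)=1$ by a combinatorial dichotomy on $\sigma$. A recurrent $\sigma$ that is eventually constant is actually constant, so if $\sigma\equiv k$ the transfer identity shows $\mathbb{K}_{\|\cdot\|_P}(S_k)$ is an $S_k$-invariant $1$-dimensional subspace on which $S_k$ acts isometrically, forcing an eigenvalue of $S_k$ of modulus $1$ and contradicting $\rho(S_k)<1$. The same argument applies as soon as $\sigma$ contains any block $kk$. The only recurrent signals not covered are the strict alternations $(\widehat{1,2})^\infty$ and $(\widehat{2,1})^\infty$; two consecutive applications of the transfer identity then give $S_2S_1\bigl(\mathbb{K}_{\|\cdot\|_P}(S_1)\bigr)=\mathbb{K}_{\|\cdot\|_P}(S_1)$ with $S_2S_1$ acting isometrically on this $1$-dimensional subspace, producing an eigenvalue of $S_2S_1$ of modulus $1$ and contradicting $\rho(S_1S_2)<1$. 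The delicate point is this alternating case together with its degenerate subcase $\mathbb{K}_{\|\cdot\|_P}(S_1)=\mathbb{K}_{\|\cdot\|_P}(S_2)$, but the uniform ``isometric action on a $1$-dimensional invariant subspace'' mechanism handles both and pins the contradiction on exactly one of the three hypothesized strict spectral bounds.
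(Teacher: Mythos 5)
Your proposal is correct and follows essentially the same route as the paper: reduction to recurrent signals via Lemma~\ref{lemIV.1}, the splitting $\mathbb{R}^2=E^s(\sigma)\oplus E^c(\sigma)$, exclusion of $\dim E^c(\sigma)=2$ by $\rho(S_k)<1$, and in the one-dimensional case the observation that a repeated letter $kk$ forces an eigenvalue of $S_k$ of modulus one while strict alternation forces one for $S_2S_1$, contradicting $\rho(S_1S_2)<1$. Your ``transfer identity'' is just a slightly more explicit packaging, via the equivariance $S_{\sigma_{\!1}}(E^c(\sigma))=E^c(\sigma(\cdot+1))$, of the same argument the paper carries out on vectors of $E^c(\sigma)$.
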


\begin{proof}
We only need to prove the sufficiency. Let $\rho(S_1)<1, \rho(S_2)<1$, and $\rho(S_1S_2)<1$. Let $\sigma=(\sigma_{\!n})_{n=1}^{+\infty}\in\varSigma_{2}^+$ be
an arbitrary recurrent switching signal. Clearly, if $\sigma$ is not generic, then it is stable for $\bS$. So we assume $\sigma$ is generic and recurrent. Then, from Theorem~\ref{thmIII.3} there exists a splitting of $\mathbb{R}^2$ into subspaces:
\begin{equation*}
\mathbb{R}^2=E^s(\sigma)\oplus E^c(\sigma).
\end{equation*}
If $\dim E^c(\sigma)=0$, then $\sigma$ is stable for $\bS$; and if $\dim E^c(\sigma)=2$ then either $\rho(S_1)=1$ or $\rho(S_2)=1$, a contradiction. We now assume $\dim E^c(\sigma)=1$.

Then, $\dim\mathbb{K}_{\|\cdot\|_P}(S_1)=1$ and $\dim\mathbb{K}_{\|\cdot\|_P}(S_2)=1$. It might be assumed, without loss of generality, that $\sigma_{\!1}=1$ and then we have $\mathbb{K}_{\|\cdot\|_P}(S_1)=E^c(\sigma)$. From this, we see
\begin{equation*}
\sigma_2=2,\; \sigma_3=1,\;\dotsc,\; \sigma_{2n}=2,\;  \sigma_{2n+1}=1,\;\dotsc.
\end{equation*}
This contradicts $\rho(S_1S_2)=\rho(S_2S_1)<1$.

Therefore, $E^c(\sigma)=\{0\}$ and $\bS$ is absolutely stable from Lemma~\ref{lemIV.1}.
\end{proof}

So, Theorem~C is proved.

Next, we need a simple fact for considering higher dimensional cases.

\begin{lem}[{\cite[Corollary]{VL}}]\label{lemV.2}
Let $A\in\mathbb{R}^{d\times d}$ be a stable matrix (i.e., $\rho(A)<1$) such that
\begin{equation*}
D-A^TDA\ge0
\end{equation*}
for some symmetric, positive-definite matrix $D$. Then $D-(A^d)^TDA^d>0$.
\end{lem}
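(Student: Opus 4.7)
\medskip

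\textbf{Proof plan for Lemma~\ref{lemV.2}.} The key identity is the telescoping sum
\begin{equation*}
D - (A^n)^T D A^n \;=\; \sum_{k=0}^{n-1} (A^k)^T (D - A^T D A) A^k,
\end{equation*}
which we verify by induction on $n$. Writing $Q = D - A^T D A \ge 0$, the claim to prove is that $E_d := \sum_{k=0}^{d-1} (A^k)^T Q A^k$ is strictly positive definite, so the approach is to suppose otherwise and derive a contradiction from the hypothesis $\rho(A) < 1$.

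Suppose $x \in \mathbb{R}^d$ is nonzero with $x^T E_d x = 0$. Since $Q \ge 0$, every summand $x^T (A^k)^T Q A^k x$ is nonnegative, hence each vanishes. Factoring $Q = C^T C$ (Cholesky/square root), this yields $C A^k x = 0$, hence $Q A^k x = 0$, for $k = 0, 1, \dots, d-1$. In other words, the cyclic subspace $W = \mathrm{span}\{x, Ax, \dots, A^{d-1}x\}$ is contained in $\ker Q$. By the Cayley--Hamilton theorem, $A^d x$ is a linear combination of $x, Ax, \dots, A^{d-1}x$, so $W$ is $A$-invariant and therefore $A^k x \in W \subseteq \ker Q$ for every $k \ge 0$.

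From $Q A^k x = 0$, i.e. $D A^k x = A^T D A^{k+1} x$, iteration gives
\begin{equation*}
D A^k x \;=\; (A^T)^n D A^{k+n} x \qquad \forall\, n \ge 0.
\end{equation*}
Because $\rho(A) = \rho(A^T) < 1$, Lemma~\ref{lemII.2} (or elementary Gel'fand) ensures $\|A^n\| \to 0$ and $\|(A^T)^n\| \to 0$. Hence the right-hand side tends to $0$ as $n \to +\infty$, forcing $D A^k x = 0$ for every $k \ge 0$, and in particular $D x = 0$. Since $D$ is positive definite, $x = 0$, contradicting our choice. This yields $E_d = D - (A^d)^T D A^d > 0$.

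The only subtle step is verifying that $W \subseteq \ker Q$ for all iterates, not merely for $k = 0, \dots, d-1$. This is where Cayley--Hamilton is indispensable, and it is also the reason that exactly $d$ iterates (rather than fewer) are needed in the statement. The rest amounts to the standard telescoping identity for discrete Lyapunov equations and the observation that, because $\rho(A) < 1$, both $A^n$ and $(A^T)^n$ decay to zero, which together pinch $D x$ to zero.
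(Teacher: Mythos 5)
Your proof is correct, but note that the paper does not prove Lemma~\ref{lemV.2} at all: it is quoted as the Corollary of Vaidyanathan and Liu \cite{VL} and used as a black box, so there is no internal argument to compare yours against; what you have written supplies a self-contained proof. Each step checks out: the telescoping identity $D-(A^n)^TDA^n=\sum_{k=0}^{n-1}(A^k)^TQA^k$ with $Q=D-A^TDA$; the deduction from $x^T\bigl(D-(A^d)^TDA^d\bigr)x=0$ that $QA^kx=0$ for $k=0,\dots,d-1$ (via $Q=C^TC$); the Cayley--Hamilton argument that the cyclic subspace $W=\mathrm{span}\{x,Ax,\dots,A^{d-1}x\}$ is $A$-invariant and hence contained in $\ker Q$ for all iterates; and the final pinch $Dx=(A^T)^nDA^nx\to0$ using $\rho(A)<1$, which forces $x=0$ since $D>0$. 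Strict positive definiteness then follows because the telescoped sum is symmetric positive semidefinite with no nonzero isotropic vector. This is essentially the standard observability-type argument behind the cited corollary, and your closing remark that $d$ factors are genuinely needed is also accurate: the nilpotent shift $Ae_i=e_{i+1}$, $Ae_d=0$, with $D=I$ satisfies the hypotheses while $D-(A^{d-1})^TDA^{d-1}$ is singular.
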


This lemma refines Lemma~\ref{lemIII.2}. From it, we can obtain a simple result which improves the statement of Theorem~A in the case of $d=2$ and $K=2$.

\begin{cor}\label{corV.3}
Let $\bS=\{S_1,S_2\}\subset\mathbb{R}^{2\times 2}$ satisfy condition (\ref{eq1.3a}). If $\rho(S_1)<1$ and $\rho(S_2)<1$, then for any $\theta$-ergodic probability
measure $\mathbb{P}$ on $\varSigma_2^+$, $\bS$ is stable driven by $\mathbb{P}$-a.e. $\sigma\in\varSigma_2^+$ as long as $\mathbb{P}$ satisfies $\mathbb{P}(\{(12,12,12,\dotsc), (21,21,21,\dotsc)\})=0$.
\end{cor}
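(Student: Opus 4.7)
The plan is to extract from the proof of Theorem~\ref{thm5.1} (Theorem~C) a pointwise dichotomy that does not require $\rho(S_1S_2)<1$, and then combine it with Poincar\'e recurrence. Concretely, I claim that under the hypotheses (\ref{eq1.3a}) and $\rho(S_1),\rho(S_2)<1$ with $d=2$, every recurrent $\sigma\in\varSigma_2^+$ is stable for $\bS$, except possibly for the two alternating signals $(12,12,12,\dotsc)$ and $(21,21,21,\dotsc)$.

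To prove the claim, let $\sigma\in\varSigma_2^+$ be recurrent. If $\sigma$ is not generic, then recurrence forces $\sigma$ to be a constant signal, and stability is immediate from $\rho(S_k)<1$. So suppose $\sigma$ is generic. Theorem~\ref{thmIII.3}, applied to the continuous matrix cocycle $\sigma\mapsto S_{\sigma_1}$ over $(\varSigma_2^+,\theta)$ with the Lyapunov norm $\|\cdot\|_P$, gives a splitting $\mathbb{R}^2=E^s(\sigma)\oplus E^c(\sigma)$. The case $\dim E^c(\sigma)=0$ yields stability, and $\dim E^c(\sigma)=2$ would force $S_{\sigma_1}$ to be a $P$-isometry and hence $\rho(S_{\sigma_1})=1$, a contradiction. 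In the case $\dim E^c(\sigma)=1$ I would argue exactly as in the proof of Theorem~\ref{thm5.1}: since $\rho(S_k)<1$ prevents $S_k$ from being a $P$-isometry, $\dim\mathbb{K}_{\|\cdot\|_P}(S_k)\le 1$ for $k=1,2$, so the inclusion $E^c(\sigma)\subset\mathbb{K}_{\|\cdot\|_P}(S_{\sigma_1})$ between 1-dimensional spaces is an equality. If $\sigma_2=\sigma_1$, then $S_{\sigma_1}$ acts on $E^c(\sigma)$ as multiplication by a unit-modulus scalar, producing a unit-modulus eigenvector of $S_{\sigma_1}$ and contradicting $\rho(S_{\sigma_1})<1$. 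Applying the same reasoning to each shifted signal $\sigma(\cdot+n)$ (again recurrent, with $E^c$ still 1-dimensional by the invariance in the Splitting Theorem) yields $\sigma_{n+2}\ne\sigma_{n+1}$ for every $n\ge 0$, and with only two symbols this forces $\sigma$ to be one of the two alternating signals.

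Given this dichotomy, the corollary follows immediately. By the Poincar\'e recurrence theorem (e.g., \cite[Theorem~1.4]{Wal82}) applied to $(\varSigma_2^+,\theta,\mathbb{P})$, $\mathbb{P}$-a.e.\ $\sigma\in\varSigma_2^+$ is recurrent under $\theta$. The hypothesis $\mathbb{P}(\{(12,12,12,\dotsc),(21,21,21,\dotsc)\})=0$ excludes precisely the two orbits on which the dichotomy could fail to give stability, so $\mathbb{P}$-a.e.\ $\sigma$ is recurrent, is not one of the two alternating signals, and is therefore stable for $\bS$.

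The only nontrivial step is the analysis when $\dim E^c(\sigma)=1$, which essentially recycles the calculation in the proof of Theorem~\ref{thm5.1}. The conceptual point is that the assumption $\rho(S_1S_2)<1$ in Theorem~C served precisely to rule out the two alternating signals, and here that exclusion is built into the ergodic hypothesis on $\mathbb{P}$ instead.
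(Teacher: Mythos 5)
Your proposal is correct, but it proves the corollary by a genuinely different route than the paper does. The paper's own proof of Corollary~\ref{corV.3} is elementary and combinatorial: it invokes Lemma~\ref{lemV.2} (Vaidyanathan--Liu) to get $\|S_k^2\|_P<1$ for $k=1,2$, observes that any signal in which the word $11$ or $22$ occurs infinitely often is therefore stable (each occurrence contracts the product norm by the fixed factor $\|S_k^2\|_P<1$), and disposes of the remaining signals by noting they are eventually alternating, a countable union of shifted copies of the two alternating points, hence $\mathbb{P}$-null by invariance and the hypothesis. Your argument instead recycles the splitting machinery: Theorem~\ref{thmIII.3} at recurrent points, the case analysis on $\dim E^c(\sigma)$ from the proof of Theorem~\ref{thm5.1} (correctly noting that the hypothesis $\rho(S_1S_2)<1$ there is used only to kill the alternating signals, which you exclude measure-theoretically instead), and Poincar\'e recurrence to reduce to recurrent signals; your handling of the shifted signals is sound, since a recurrent non-generic binary signal is constant and $\dim E^c(\sigma(\cdot+n))$ stays equal to $1$ by the invariance of the splitting (or simply because stability of a shifted signal would imply stability of $\sigma$). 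What the paper's route buys is a stronger pointwise conclusion (stability of every signal containing $11$ or $22$ infinitely often, with no recurrence hypothesis) from lighter tools; what your route buys is that you never need Lemma~\ref{lemV.2} and you only need the two alternating points themselves (not their preimages) to be null, since recurrence already excludes eventually-but-not-purely alternating signals. There is no circularity in your use of the Theorem~\ref{thm5.1} argument, since you rework its proof rather than cite its statement; note, however, that the paper deliberately proves the corollary independently so that Theorem~C can conversely be deduced from it together with Lemma~\ref{lemIV.1}, an economy your derivation forgoes.
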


\begin{proof}
Since $\mathbb{P}$ is ergodic and $\mathbb{P}(\{(12,12,12,\dotsc), (21,21,21,\dotsc)\})=0$, we have
\begin{equation*}
\mathbb{P}(\{\sigma\in\varSigma_2^+\,|\,\sigma(\cdot+n)=(12,12,12,\dotsc)\textrm{ or }(21,21,21,\dotsc)\textrm{ for some }n\ge1\})=0.
\end{equation*}
Now, let $\sigma=(\sigma_{\!n})_{n=1}^{+\infty}\in\varSigma_2^+$ be arbitrary. Then, $\sigma$ can consist of the following $2$-length words:
\begin{equation*}
11,\;22,\;12,\;21.
\end{equation*}
If $11$ (or $22$) appears infinitely many times in $(\sigma_{\!n})_{n=1}^{+\infty}$, then from Lemma~\ref{lemV.2} it follows that $\bS$ is stable driven by $\sigma$. Next, assume $11$ and $22$ both only appear finitely many times in $(\sigma_{\!n})_{n=1}^{+\infty}$ and let $a=12$ and $b=21$. Then, one can find some $N\ge1$ such that
\begin{equation*}
\sigma(\cdot+N)=(a,a,a,\dotsc).
\end{equation*}
Note here that if $ab$ appears $m$ times in $(\sigma_{\!n})_{n=1}^{+\infty}$ then $22$ must appear $m$ times; if $ba$ appears $m$ times in $(\sigma_{\!n})_{n=1}^{+\infty}$ then $11$ must appear $m$ times.
So, $\bS$ is stable driven by $\mathbb{P}$-a.e. $\sigma\in\varSigma_2^+$.

This completes the proof of Corollary~\ref{corV.3}.
\end{proof}

The condition $\mathbb{P}(\{(12,12,12,\dotsc), (21,21,21,\dotsc)\})=0$ means that $\mathbb{P}$ is not distributed on the periodic orbit of the one-sided Markov shift $(\varSigma_{\!K}^+,\theta)$:
\begin{equation*}
\{(12,12,\dotsc),\; (21,21,\dotsc)\}.
\end{equation*}
This corollary shows that $\bS$ is ``completely'' almost sure stable up to only one ergodic measure supported on a periodic orbit generated by the word $12$.

In addition, Theorem~C can be directly deduced from Corollary~\ref{corV.3} and Lemma~\ref{lemIV.1}.

For the sake of our convenience, we now restate our second absolute stability result Theorem~D as follows:

\begin{theorem}\label{thmV.4}
Let $\bS=\{S_1,S_2\}\subset\mathbb{R}^{3\times 3}$ satisfy condition (\ref{eq1.3a}). Then, $\bS$ is absolutely stable if and only if there holds the following conditions:
\begin{equation*}
\rho(S_1)<1,\quad \rho(S_2)<1,\eqno{(\mathrm{C1})}
\end{equation*}
\begin{equation*}
\rho(S_1S_2)<1,\eqno{(\mathrm{C2})}
\end{equation*}
\begin{equation*}
\rho(S_{w_1}S_{w_2}S_{w_3})<1\quad \forall (w_1,w_2,w_3)\in\{1,2\}^3,\eqno{(\mathrm{C3})}
\end{equation*}
\begin{equation*}
\rho(S_{w_1}\dotsm S_{w_4})<1\quad \forall (w_1,\dotsc,w_4)\in\{1,2\}^4,\eqno{(\mathrm{C4})}
\end{equation*}
\begin{equation*}
\rho(S_{w_1}\dotsm S_{w_5})<1\quad \forall (w_1,\dotsc,w_5)\in\{1,2\}^5,\eqno{(\mathrm{C5})}
\end{equation*}
\begin{equation*}
\rho(S_{w_1}\dotsm S_{w_6})<1\quad \forall (w_1,\dotsc,w_6)\in\{1,2\}^6,\eqno{(\mathrm{C6})}
\end{equation*}
\begin{equation*}
\rho(S_{w_1}\dotsm S_{w_8})<1\quad \forall (w_1,\dotsc,w_8)\in\{1,2\}^8.\eqno{(\mathrm{C8})}
\end{equation*}
\end{theorem}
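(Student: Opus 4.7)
The necessity half is immediate: a single stable linear system has spectral radius strictly less than $1$, so absolute stability of $\bS$ forces $\rho(A)<1$ for every finite product $A$ of members of $\bS$, in particular for all words of the seven lengths listed. The plan for sufficiency is to follow the template of Theorem~\ref{thm5.1}, suitably upgraded to dimension three. By Lemma~\ref{lemIV.1} it suffices to prove stability for every recurrent switching signal $\sigma\in R(\theta)$. A recurrent signal that is not generic must be constant (any symbol that appears returns infinitely often), and the constant signals are handled by condition (C1) via Lemma~\ref{lemV.2}. Hence I only need to treat generic recurrent $\sigma$ and show that $\sigma$ is stable for $\bS$.

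Fix such a $\sigma$ and apply the Splitting Theorem to obtain $\mathbb{R}^3=E^s(\sigma)\oplus E^c(\sigma)$, with the equivariance relation $S_{\sigma_1}(E^c(\sigma))=E^c(\theta\sigma)$ and $\|S_{\sigma_n}\dotsm S_{\sigma_1}(x)\|_P=\|x\|_P$ for every $x\in E^c(\sigma)$. The goal is $E^c(\sigma)=\{0\}$. The case $\dim E^c(\sigma)=3$ is ruled out at once: that would make $S_{\sigma_1}$ a $P$-isometry, contradicting $\rho(S_{\sigma_1})<1$. So $\dim E^c(\sigma)\in\{1,2\}$, and the same constraint shows that for every index $n$ the vectors in $E^c(\theta^n\sigma)$ lie in $\mathbb{K}_{\|\cdot\|_P}(S_{\sigma_{n+1}})$. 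In particular $\dim \mathbb{K}_{\|\cdot\|_P}(S_k)\ge\dim E^c(\sigma)$ whenever the symbol $k$ appears in $\sigma$, which, by genericity, forces $\dim\mathbb{K}_{\|\cdot\|_P}(S_k)\in\{1,2\}$ for both $k=1,2$.

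The heart of the proof is then a case analysis on the pair $(d_1,d_2)=(\dim\mathbb{K}_{\|\cdot\|_P}(S_1),\dim\mathbb{K}_{\|\cdot\|_P}(S_2))$ together with $\dim E^c(\sigma)$. In each case I iterate the constraint $E^c(\theta^n\sigma)\subset\mathbb{K}_{\|\cdot\|_P}(S_{\sigma_{n+1}})$ and chase the forward orbit of $E^c(\sigma)$ under the alternating maps $S_1,S_2$ inside the (low-dimensional) Grassmannian. Because this orbit must stay inside a finite union of linear subspaces and $\sigma$ is recurrent, it closes up: $\sigma$ is forced to be eventually periodic with a period coming from an explicit short list. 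The case $(d_1,d_2)=(1,1)$ reproduces the two-dimensional argument of Theorem~\ref{thm5.1} and yields only the period-$2$ word $(12)^\infty$, excluded by (C2). The mixed cases $(1,2)$ and $(2,1)$ are exactly those addressed in the proof of Theorem~\ref{thmIV.4}(2), giving the periods $2$ and $3$, excluded by (C2) and (C3). The new genuinely three-dimensional case $(d_1,d_2)=(2,2)$ produces the remaining lengths $4,5,6,8$.

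The main obstacle, and the step on which the precise exponent list $\{1,2,3,4,5,6,8\}$ depends, is this last combinatorial/geometric enumeration in the $(2,2)$ case. Here I would track the orbit of a $2$-plane $E^c(\sigma)$ inside $\mathrm{Gr}(2,3)$ under the semigroup generated by $S_1,S_2$, using that on each $\mathbb{K}_{\|\cdot\|_P}(S_k)$ the map $S_k$ acts as a $P$-isometry, and that $E^c(\theta^n\sigma)$ must coincide with $\mathbb{K}_{\|\cdot\|_P}(S_{\sigma_{n+1}})$ whenever it has dimension $2$. The delicate point is justifying exactly which minimal orbits of the alternating action are possible; I expect period $8$ to arise as the least period of such an orbit when $S_1$ and $S_2$ interleave their $2$-planes nontrivially, while period $7$ is absent because any period-$7$ word in two letters necessarily contains a subword of length $\le 6$ that already forces the $E^c$-orbit to close earlier, reducing the question to one of the shorter conditions (C1)--(C6). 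Once these allowable periods are pinned down, each corresponding periodic $\sigma$ is ruled out by the matching condition (C$\ell$), yielding $E^c(\sigma)=\{0\}$ and the absolute stability of $\bS$.
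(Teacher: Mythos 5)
Your setup matches the paper's in outline (reduce to recurrent signals via Lemma~\ref{lemIV.1}, dispose of non-generic recurrent signals by (C1), apply the splitting theorem, note $E^c(\theta^n\sigma)\subset\mathbb{K}_{\|\cdot\|_P}(S_{\sigma_{n+1}})$, and do a dimension casework), but the decisive step is missing. The entire content of the theorem --- that a nonzero $E^c(\sigma)$ forces one of the finitely many products of lengths $1$--$6$ or $8$ to have spectral radius $1$, and in particular why length $8$ is needed while length $7$ is not --- is exactly the part you defer with ``I expect'' and ``I would track.'' Your proposed mechanism is also not substantiated and, as stated, would not fly: recurrence of $\sigma$ plus the fact that the $E^c$-orbit lies in a finite union of subspaces does not by itself force $\sigma$ to be eventually periodic, and the assertion that period $7$ is absent ``because any period-$7$ word contains a subword of length $\le 6$ that forces earlier closure'' is an unproven claim, not a reduction. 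Your assignment of word lengths to the dimension pairs is also off: in the paper the case $\dim E^c(\sigma)=2$ (which forces both $\mathbb{K}$-spaces to be $2$-dimensional) yields only the alternating period-$2$ signal, excluded by (C2); the lengths $4,5,6,8$ all arise in the case $\dim E^c(\sigma)=1$.

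What the paper actually does at this point is quite different from a Grassmannian-orbit periodicity argument. It proves two structural facts for the case $\dim E^c(\sigma)=1$: (i) $\sigma$ cannot contain $(1,1,1)$ or $(2,2,2)$ (three consecutive equal symbols would make the corresponding $\mathbb{K}_{\|\cdot\|_P}(S_k)$ invariant, giving $\rho(S_k)=1$ against (C1) via Lemma~\ref{lemIV.2}); and (ii) a ``return word'' lemma: if $\sigma$ contains a block $(k,k,w_1,\dotsc,w_m,k,k)$ then necessarily $\rho(S_{w_m}\dotsm S_{w_1}S_kS_k)=1$. These two facts convert the geometric hypothesis $E^c(\sigma)\neq\{0\}$ into concrete spectral-radius-one products, and the proof then performs an explicit finite tree enumeration of all admissible continuations of the prefixes $112$, $121$, $1221$, ruling out every branch by one of (C2)--(C6), (C8) or Lemma~\ref{lemV.2}, and concluding $(\sigma_{\!1},\sigma_{\!2})\notin\{(1,1),(1,2)\}$, contradicting the normalization $\sigma_{\!1}=1$. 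It is precisely this enumeration that produces the exponent list $\{1,2,3,4,5,6,8\}$ and explains the absence of $7$; without it (or an equivalent argument), your proposal does not prove the theorem.
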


We note here that it is somewhat surprising that we do not need to consider the words of length $7$.

\begin{proof}
We need to consider only the sufficiency. Let conditions (C1) -- (C8) all hold. According to Lemma~\ref{lemIV.1}, we let $\sigma=(\sigma_{\!n})_{n=1}^{+\infty}\in\varSigma_{2}^+$ be an arbitrary recurrent switching signal. There is no loss of generality in assuming $\sigma_{\!1}=1$.

It is easily seen that $0\le\dim\mathbb{K}_{\|\cdot\|_P}(S_1)\le 2$ and $0\le\dim\mathbb{K}_{\|\cdot\|_P}(S_2)\le 2$ by condition (C1). Then from Theorem~\ref{thmIII.3} with $\|\cdot\|=\|\cdot\|_P$, there exists a splitting of $\mathbb{R}^3$ into subspaces:
\begin{equation*}
\mathbb{R}^3=E^s(\sigma)\oplus E^c(\sigma)\quad \textrm{such that }\dim E^c(\sigma)\le\dim\mathcal{K}_{k,\|\cdot\|_P}\textrm{ for }k=1,2.
\end{equation*}
There is only one of the following three cases occurs.
\begin{itemize}
\item $\dim E^c(\sigma)=2$;

\item $\dim E^c(\sigma)=1$;

\item $\dim E^c(\sigma)=0$.
\end{itemize}
Clearly, if $\sigma$ is not generic, then it is stable for $\bS$. So we let $\sigma$ be generic in what follows. We also note that $E^c(\sigma)\subseteq\mathbb{K}_{\|\cdot\|_P}(S_1)$.

Case (a): Let $\dim E^c(\sigma)=2$. Then $\dim\mathbb{K}_{\|\cdot\|_P}(S_1)=\dim\mathbb{K}_{\|\cdot\|_P}(S_2)=2$ and further we have
$\mathbb{K}_{\|\cdot\|_P}(S_1)=E^c(\sigma)$. If $\sigma_2=1$ then it follows that $\mathbb{K}_{\|\cdot\|_P}(S_1)$ is $S_1$-invariant and so $\rho(S_1)=1$ by Lemma~\ref{lemIV.2}, a contradiction. Thus, $\sigma_2=2$. If $\sigma_3=2$ it follows that $\mathbb{K}_{\|\cdot\|_P}(S_2)$ is $S_2$-invariant and so $\rho(S_2)=1$ by Lemma~\ref{lemIV.2}, also a contradiction. So, $\sigma_3=1$. Repeating this, we can see $\sigma=(1,2,1,2,1,2,\dotsc)$, a contradiction to condition (C2).
Thus, the case (a) cannot occur.

Case (b): Let $\dim E^c(\sigma)=1$. (This is the most complex case needed to discussion.) We first claim that $\sigma$ does not contain any one of the following two words:
\begin{equation*}
(1,1,1),\; (2,2,2).
\end{equation*}
In fact, without loss of generality, we let $(\sigma_{n+1},\sigma_{n+2},\sigma_{n+3})=(2,2,2)$. Choose a vector $x\in E^c(\sigma)$ with $\|x\|_P=1$. Then, $v:=S_{\sigma_{\!n}}\dotsm S_{\sigma_{\!1}}(x)\in\mathbb{K}_{\|\cdot\|_P}(S_2)$ with $\|v\|_P=1$. Moreover, $S_2(v)$ and $S_2(S_2(v))$ both belong to $\mathbb{K}_{\|\cdot\|_P}(S_2)$ such that with $\|S_2(v)\|_P=\|S_2(S_2(v))\|_P=1$. Since
$S_2(v)\not=\pm v$ (otherwise $\rho(S_2)=1$), we see $\mathbb{K}_{\|\cdot\|_P}(S_2)$ is $S_2$-invariant. So, $\rho(S_2)=1$ by Lemma~\ref{lemIV.2}, a contradiction to condition (C1).

Secondly, we claim that if $\sigma$ contains the word of the form $(1,1,w_1,\dotsc,w_m,1,1)$ then
\begin{equation*}
\rho(S_{w_m}\dotsc,S_{w_1}S_1S_1)=1;
\end{equation*}
and if $\sigma$ contains the word of the form $(2,2,w_1,\dotsc,w_m,2,2)$ then
\begin{equation*}
\rho(S_{w_m}\dotsc,S_{w_1}S_2S_2)=1.
\end{equation*}
In fact, without loss of generality, we assume that
\begin{equation*}
\sigma=(1,\sigma_2,\dotsc,\sigma_{\!n}, 2,2,w_1,\dotsc,w_{m},2,2,\dotsc).
\end{equation*}
Then, take arbitrarily a vector $x\in E^c(\sigma)$ with $\|x\|_P=1$ and write $v_n:=S_{\sigma_{\!n}}\dotsm S_{\sigma_{\!1}}(x)$. So, $v_n$ and $S_2(v_n)$ both belong to $\mathbb{K}_{\|\cdot\|_P}(S_2)$ such that $\|v_n\|_p=\|S_2(v_n)\|_P=1$. On the other hand, $v^\prime:=S_{w_m}\dotsm S_{w_1}S_2S_2(v_n)$ and $S_2(v^\prime)$ both belong to $\mathbb{K}_{\|\cdot\|_P}(S_2)$ with $\|v^\prime\|_P=\|S_2(v^\prime)\|_P=1$. If $v_n\not=\pm v^\prime$ then $\mathbb{K}_{\|\cdot\|_P}(S_2)$ is $S_2$-invariant and so $\rho(S_2)=1$ by Lemma~\ref{lemIV.2}, a contradiction to condition (C1). Thus, we have $v_n=\pm v^\prime$ and then $\rho(S_{w_m}\dotsc,S_{w_1}S_2S_2)=1$.

Thirdly, we show the case (b), i.e., $\dim E^c(\sigma)=1$, does not occur too. In fact, from the above claims, it follows that $\sigma=(\sigma_{\!n})_{n=1}^{+\infty}$ only possesses the following forms:
\begin{equation}\label{eqV.1}
1\to\begin{cases}12\to\dotsm~\textrm{(case (A))}\\2\to\begin{cases}1\to\dotsm~\textrm{(case (B))}\\21\to\dotsm~\textrm{(case (C))}\end{cases}\end{cases}
\end{equation}
Here and in the sequel, ``$a\to b$" means that $b$ follows $a$; i.e., $\sigma_{\!n}=a$ and $\sigma_{\!{n+1}}=b$ for some $n$. For example, in the above figure, ``$1\to2\to21$" means $\sigma_{\!1}=1, \sigma_{\!2}=2$ and $(\sigma_{\!3},\sigma_{\!4})=(2,1)$. In addition, in the following three figures, the symbol ``$\times$" means ``This case does not happen." For the case (A) in the above figure (\ref{eqV.1}), we have the following:
\begin{equation*}
112\to\begin{cases}
1\to\begin{cases}
1~(\times\textrm{ by (C3)})\\2\to\begin{cases}1\to\begin{cases}1~(\times\textrm{ by (C5)})\\2~(\times\textrm{ by (C2)}\textrm{ and Lemma}~\ref{lemV.2})\end{cases}\\
21\to\begin{cases}1~(\times\textrm{ by (C6)})\\2\to\begin{cases}1\to\begin{cases}1~(\times\textrm{ by (C8)})\\2\to\begin{cases}1~(\times\textrm{ by (C2) and Lemma~\ref{lemV.2}})\\2~(\times\textrm{ by (C5)})\end{cases}\end{cases}\\2~(\times\textrm{ by (C3)})\end{cases}\end{cases}
\end{cases}\end{cases}\\21\to\begin{cases}1~(\times\textrm{ by (C4)})\\2\to\begin{cases}1\to\begin{cases}1~(\times\textrm{ by (C6)})\\2\to\begin{cases}1~(\times\textrm{ by (C2) and Lemma~\ref{lemV.2}})\\2~(\times\textrm{ by (C5)})\end{cases}\end{cases}\\2~(\times\textrm{ by (C3)})\end{cases}\end{cases}\end{cases}
\end{equation*}
Thus,
\begin{equation*}
(\sigma_{\!1},\sigma_{\!2},\sigma_{\!3})\not=(1,1,2)
\end{equation*}
and then
\begin{equation}\label{eqV.2}
(\sigma_{\!1},\sigma_{\!2})\not=(1,1).
\end{equation}

For the case (C) in the figure (\ref{eqV.1}), we have
\begin{equation*}
1221\to\begin{cases}12\to\begin{cases}1\to\begin{cases}1~(\times\textrm{ by (C3)})\\2\to\begin{cases}1\to\begin{cases}1~(\times\textrm{ by (C5)})\\2~(\times\textrm{ by (C6)})\end{cases}\\2~(\times\textrm{ by (C6)})\end{cases}\end{cases}\\2~(\times\textrm{ by (C4)})\end{cases}\\
2\to
\begin{cases}
1\to\begin{cases}12\to\begin{cases}1\to\begin{cases}1~(\times\textrm{ by (C3)})\\2\to\begin{cases}1\to\begin{cases}1~(\times\textrm{ by (C5)})\\2~(\times\textrm{ Lemma~\ref{lemV.2}})\end{cases}\\2~(\times\textrm{ by (C8)})\end{cases}\end{cases}\\2~(\times\textrm{ by (C6)})\end{cases}\\2\to\begin{cases}1~(\times\textrm{ by Lemma~\ref{lemV.2}})\\2~(\times\textrm{ by (C5)})\end{cases}\end{cases}\\2~(\times\textrm{ by (C3)}) \end{cases}\end{cases}
\end{equation*}
Thus
\begin{equation*}
(\sigma_{\!1},\sigma_{\!2},\sigma_{\!3},\sigma_{\!4})\not=(1,2,2,1)
\end{equation*}
and then
\begin{equation}\label{eqV.3}
(\sigma_{\!1},\sigma_{\!2},\sigma_{\!3})\not=(1,2,2).
\end{equation}

Finally, for the case (B) in the figure (\ref{eqV.1}),
\begin{equation*}
121\to
\begin{cases}
12\to\begin{cases}1\to\begin{cases}1~(\times\textrm{ by (C3)})\\2\to\begin{cases}1\to\begin{cases}1~(\times\textrm{ by (C5)})\\2~(\times\textrm{ by Lemma~\ref{lemV.2}})\end{cases}\\
21\to\begin{cases}1~(\times\textrm{ by (C6)})\\2\to\begin{cases}1\to\begin{cases}1~(\times\textrm{ by (C8)})\\2\to\begin{cases}1~(\times\textrm{ by Lemma~\ref{lemV.2}})\\2~(\times\textrm{ by (C5)})\end{cases}\end{cases}\\2~(\times\textrm{ by (C3)})\end{cases}\end{cases}\end{cases}\end{cases}\\
21\to\begin{cases}1~(\times\textrm{ by (C4)})\\2\to\begin{cases}1\to\begin{cases}1~(\times\textrm{ by (C6)})\\2\to\begin{cases}1~(\times\textrm{ by Lemma~\ref{lemV.2}})\\2~(\times\textrm{ by (C5)})\end{cases}\end{cases}\\2~(\times\textrm{ by (C3)})\end{cases}\end{cases}\end{cases}\\
2\to\begin{cases}1\to\begin{cases}12\to\begin{cases}1\to\begin{cases}1~(\times\textrm{ by (C3)})\\2\to\begin{cases}1\to\begin{cases}1~(\times\textrm{ by (C5)})\\2~(\times\textrm{ by Lemma~\ref{lemV.2}})\end{cases}\\
21\to\begin{cases}1~(\times\textrm{ by (C6)})\\2\to\begin{cases}1\to\begin{cases}1~(\times\textrm{ by (C8)})\\2\to\begin{cases}1~(\times\textrm{ by Lemma~\ref{lemV.2}})\\2~(\times\textrm{ by (C5)})\end{cases}\end{cases}\\2~(\times\textrm{ by (C3)})\end{cases}\end{cases}\end{cases}\end{cases}\\
21\to\begin{cases}1~(\times\textrm{ by (C4)})\\2\to\begin{cases}1\to\begin{cases}1~(\times\textrm{ by (C6)})\\2\to\begin{cases}1~(\times\textrm{ by Lemma~\ref{lemV.2}})\\2~(\times\textrm{ by (C5)})\end{cases}\end{cases}\\2~(\times\textrm{ by (C3)})\end{cases}\end{cases}\end{cases}\\2~(\times\textrm{ by Lemma~\ref{lemV.2}})\end{cases}\\
21\to\begin{cases}12\to\begin{cases}1\to\begin{cases}1~(\times\textrm{ by (C3)})\\21\to\begin{cases}1~(\times\textrm{ by (C5)})\\2~(\times \textrm{ by Lemma~\ref{lemV.2}})\end{cases}\end{cases}\\2~(\times\textrm{ by (C4)})\end{cases}\\
2\to\begin{cases}1\to\begin{cases}12\to\begin{cases}1\to\begin{cases}1~(\times\textrm{ by (C3)})\\2\to\begin{cases}1\to\begin{cases}1~(\times\textrm{ by (C5)})\\2~(\times\textrm{ by Lemma~\ref{lemV.2}})\end{cases}\\2~(\times\textrm{ by (C8)})\end{cases}\end{cases}\\2~(\times\textrm{ by (C6)})\end{cases}\\2\to\begin{cases}1~(\times\textrm{ by Lemma~\ref{lemV.2}})\\2~(\times\textrm{ by (C5)})\end{cases}\end{cases}\\2~(\times\textrm{ by (C3)})\end{cases}
\end{cases}\end{cases}\end{cases}
\end{equation*}
Thus, $(\sigma_{\!1},\sigma_{\!2},\sigma_{\!3})\not=(1,2,1)$. Further, from (\ref{eqV.3}) it follows $(\sigma_{\!1},\sigma_{\!2})\not=(1,2)$. This together with (\ref{eqV.2}) implies that $(\sigma_{\!1},\sigma_{\!2})\not\in\{(1,1),(1,2)\}$, a contradiction.

So, $\dim E^c(\sigma)\not=1$ and hence case (b) does not occur.

Therefore, $\dim E^c(\sigma)=0$. This implies that $\sigma$ is stable for $\bS$. Therefore $\bS$ is absolutely stable from Lemma~\ref{lemIV.1}.

This completes the proof of Theorem~\ref{thmV.4}.
\end{proof}

\section{Examples}\label{secVI}

We in this section shall give several examples to illustrate
applications of our results. In what follows, let $\|\cdot\|_2$ be the usual Euclidean norm on $\mathbb{R}^d$; that is, $P=I_d$ in (\ref{eq1.3b}).

First, a very simple example is the following.

\begin{example}
Let $\bS=\{S_1,\ S_2\}$ with
\[
  S_1=\left(\begin{array}{cc}
       1 & 0\\ 0 & \alpha
  \end{array}\right),
  \qquad S_2=\left(\begin{array}{cc}
       \beta & 0\\ 0 & 1
  \end{array}\right),
\]
where $|\alpha|<1,\ \ |\beta|<1$. It is easy to see that
\[
  \|S_1\|_2=\|S_2\|_2=1,
\]
and that $\mathbb{K}_{\|\cdot\|_2}(S_1)=\{(x_1, 0)^T\in\mathbb{R}^2\mid
x_1\in\mathbb{R}\},\ \mathbb{K}_{2,\|\cdot\|_2}(S_2)=\{(0, x_2)^T\in\mathbb{R}^2\mid
x_2\in\mathbb{R}\}$. So, we can obtain that $\mathbb{K}_{\|\cdot\|_2}(S_1)\bigcap\mathbb{K}_{\|\cdot\|_2}(S_2)=\{0\}$
and $\mathbb{K}_{\|\cdot\|_2}(S_k)$ is $S_k$-invariant. Thus the switched linear
system $\bS$ is asymptotically stable for all switching signals in
which each $k$ in $\{1,2\}$ is stable by Lemma~\ref{lemIV.3}. Also,
from Theorem~\ref{thmIV.4}, it follows that all recurrent signals but
the fixed signals $(1,1,1,\dotsc)$ and $(2,2,2,\dotsc)$ are
stable for $\bS$. We note here that the periodic switching signal $(1,2,1,2,\dotsc)$ is stable for $\bS$.
\end{example}

A more interesting example is the following

\begin{example}\label{exaVI.2}
Let $\bS=\{S_1,\ S_2\}$ with
\[
  S_1=\alpha\left(\begin{array}{cc}
       1 & 0\\ 1 & 1
  \end{array}\right),
  \quad S_2=\beta\left(\begin{array}{cc}
       1 & \frac{3}{2}\\ 0 & 1
  \end{array}\right),
\]
where
\[
 \alpha=\sqrt{\frac{3-\sqrt{5}}{2}},\quad \beta=\frac{1}{2}.
\]
Then, $\|S_1\|_2=\|S_2\|_2=1$. A direct computation shows that
\[
\mathbb{K}_{\|\cdot\|_2}(S_1)=\left\{(x_1,x_2)^T\in\mathbb{R}^2\mid
x_1=\frac{\sqrt{5}+1}{2}x_2\right\}
\]
\[
\mathbb{K}_{\|\cdot\|_2}(S_2)=\left\{(x_1,x_2)^T\in\mathbb{R}^2\mid x_2=2x_1\right\}.
\]
Thus $\mathbb{K}_{\|\cdot\|_2}(S_1)\bigcap\mathbb{K}_{\|\cdot\|_2}(S_2)=\{0\}$. But they are not
invariant. Thus $\bS$ is asymptotically stable for all generic recurrent
switching signals but the periodic signal $(1,2,1,2,\dotsc)$ by
Theorem~\ref{thmIV.4}. Note that the two subsystems themselves are
asymptotically stable.
\end{example}

Next, we give an example which is the discretization of the
switched linear continuous system borrowed from \cite{Bal}.

\begin{example}
Let $\bS=\{S_1,\ S_2, S_3\}$ with
\begin{equation*}
  S_1=\left(\begin{array}{ccc}
       \alpha & 0 & 0\\ 0 & 0 & -1\\0 & 1 & 0
  \end{array}\right),
  \qquad S_2=\left(\begin{array}{ccc}
       \alpha & 0 & 0\\ 0 & \alpha & 0\\0 & 0 & 1
  \end{array}\right), \qquad
  S_3=\left(\begin{array}{ccc}
       \alpha & 0 & 0\\ 0 & 1 & 0\\0 & 0 & \alpha
  \end{array}\right),
\end{equation*}
where $|\alpha|<1$.
It is easy to see that $\|S_1\|_2=\|S_2\|_2=\|S_3\|_2=1$ and
\begin{align*}
\mathbb{K}_{\|\cdot\|_2}(S_1)& =\left\{(x_1,x_2,x_3)^T\in\mathbb{R}^3\mid x_1=0\right\},\\
\mathbb{K}_{\|\cdot\|_2}(S_2)& =\left\{(x_1,x_2,x_3)^T\in\mathbb{R}^3\mid x_1=x_2=0\right\},\\
\mathbb{K}_{\|\cdot\|_2}(S_3)& =\left\{(x_1,x_2,x_3)^T\in\mathbb{R}^3\mid
x_1=x_3=0\right\}.
\end{align*}

Since $\mathbb{K}_{\|\cdot\|_2}(S_2)\bigcap\mathbb{K}_{\|\cdot\|_2}(S_3)=\{0\}$ and they are
invariant respect to $S_2$ and $S_3$, respectively, we have that any
generic switching signal in which either the word $(2,3)$ or the
$(3,2)$ appears infinitely many times are stable by
Lemma~\ref{lemIV.3}. For the any other generic switching signals
$\sigma=(\sigma_{\!1},\sigma_{\!2},\dotsc)$, that is, in which both the word $(2,3)$ and
the $(3,2)$ appear at most finite many times, the matrix $Q_{\sigma}$
defined in (\ref{eqIII.1}) is
\[
Q_{\sigma}=\left(\begin{array}{ccc}
       0 & 0 & 0\\ 0 & \alpha^{k_0} & 0\\0 & 0 & \alpha^{j_0}
  \end{array}\right).
\]
for some nonnegative integers $k_0$ and $j_0$ which depend on the
times of appearance of $(2,3)$ and $(3,2)$ in $\sigma$. Thus by
Proposition~\ref{propIII.7}, we have
\begin{align*}
&\lim_{n\to\infty}\|S_{\sigma_{\!n}}\dotsm S_{\sigma_{\!1}}x\|_2=0,&&\forall
   x\in \{(x_1,0, 0)^T\mid x_1\in\mathbb{R}\}=\ker(Q_{\sigma}),\\
&\lim_{n\to\infty}\|S_{\sigma_{\!n}}\dotsm S_{\sigma_{\!1}}x\|_2=\|Q_{\sigma}(x)\|_2,&&\forall
   x\in \{(0,x_2, x_3)^T\mid x_2,\
   x_3\in\mathbb{R}\}=\mathrm{Im}(Q_{\sigma}),
\end{align*}
for such kind of generic switching signals.
\end{example}

The following Example~\ref{exaVI.4} is associated to Theorem~C.

\begin{example}\label{exaVI.4}
Let $\bS=\{S_1, S_2\}$ with
\begin{equation*}
  S_1=\frac{1}{2}\left(\begin{matrix}1&0\\\frac{3}{2}&-1\end{matrix}\right),
  \quad S_2=\sqrt{\frac{3-\sqrt{5}}{2}}\left(\begin{matrix}1&1\\0&1\end{matrix}\right).
\end{equation*}
Then, using $\sqrt{\rho(A^TA)}=\|A\|_2$ we have
\begin{equation*}
\rho(S_1)=\frac{1}{2}<1,\quad\|S_1\|_2=1\quad \textrm{and}\quad\rho(S_2)=\sqrt{\frac{3-\sqrt{5}}{2}}<1,\quad \|S_2\|_2=1.
\end{equation*}
In addition,
\begin{equation*}
\rho(S_1S_2)=\sqrt{\frac{3-\sqrt{5}}{2}}=\rho(S_2)<1.
\end{equation*}
Therefore, $\bS$ is absolutely stable by Theorem~C.
\end{example}

The interesting \cite[Proposition~18]{PW-LAA08} implies that if $\mathcal{S}=\{A_1,\dotsc,A_m\}\subset\mathbb{R}^{d\times d}$ is symmetric (i.e. $A^T\in\mathcal{S}$ whenever $A\in\mathcal{S}$), then $\mathcal{S}$ has the spectral finiteness property; in fact, it holds that $\rho(\mathcal{S})=\sqrt{\rho(A^TA)}$ for some $A\in\mathcal{S}$. This naturally motivates us to extend an arbitrary $\bS$ into a symmetric set $\mathcal{S}=\bS\cup\bS^T$. Let us see a simple example.

\begin{example}\label{exaVI.5}
Let $\bS=\left\{A=\sqrt{\frac{3-\sqrt{5}}{2}}\left(\begin{matrix}1&1\\0&1\end{matrix}\right)\right\}$. Then, $\bS$ satisfies (\ref{eq1.3a}) with $\|A\|_2=1$ such that $\rho(\bS)=\sqrt{\frac{3-\sqrt{5}}{2}}<1$. But for $\mathcal{S}=\{A,A^T\}$, $\rho(\mathcal{S})=\sqrt{\rho(A^TA)}=1\not=\rho(\bS)$.
\end{example}

This example shows that the extension $\mathcal{S}$ does not work for the original system $\bS$ needed to be considered here.

Finally, the following Example~\ref{exaVI.6} is simple. Yet it is very interesting to the stability analysis of switched systems.

\begin{example}\label{exaVI.6}
Let $\bS=\{S_1, S_2\}\subset\mathbb{R}^{2\times 2}$ with
\begin{equation*}
  S_1=\sqrt{\frac{3-\sqrt{5}}{2}}\left(\begin{matrix}1&0\\1&1\end{matrix}\right),
  \quad S_2=\sqrt{\frac{3-\sqrt{5}}{2}}\left(\begin{matrix}1&1\\0&1\end{matrix}\right).
\end{equation*}
Then, using $\sqrt{\rho(A^TA)}=\|A\|_2$ we have
\begin{equation*}
\rho(S_1)=\rho(S_2)=\sqrt{\frac{3-\sqrt{5}}{2}}<1,\quad\|S_1\|_2=\|S_2\|_2=1,\quad \textrm{and}\quad\rho(S_1S_2)=1.
\end{equation*}
So, $\bS$ is not absolutely stable. Yet from Corollary~\ref{corV.3}, $\bS$ is stable driven by $\mathbb{P}$-a.e. $\sigma\in\varSigma_2^+$, for any $\theta$-ergodic probability
measure $\mathbb{P}$ on $\varSigma_2^+$, as long as $\mathbb{P}$ is not the ergodic measure distributed on the periodic orbit
\begin{equation*}
\{(12,12,12,\dotsc), (21,21,21,\dotsc)\}.
\end{equation*}
\end{example}
\section{Concluding remarks}\label{secVII}
In this paper, we have considered the asymptotic stability of a discrete-time linear switched system, which is induced by a set $\bS=\{S_1,\dotsc,S_K\}\subset\mathbb{R}^{d\times d}$ such that each $S_k$ shares a common, but not necessarily strict, Lyapunov matrix $P$ as in (\ref{eq1.3a}).

We have shown that if every subsystem $S_k$ is stable then $\bS$ is stable driven by a nonchaotic switching signal. Particularly, in the cases $K=2$ and $d=2,3$, we have proven that $\bS$ has the spectral finiteness property and so the stability is decidable.

Recall that $\bS$ is called \emph{periodically switched stable}, if
$\rho(S_{w_n}\dotsm S_{w_1})<1$ for all finite-length words $(w_1,\dotsc,w_n)\in\{1,\dotsc,K\}^n$ for $n\ge1$; see, e.g., \cite{Gur-LAA95, DHX-aut, Dai-JDE}.

Finally, we end this paper with a problem for further study.

\begin{conjecture}
Let $\bS=\{S_1,S_2\}\subset\mathbb{R}^{d\times d}, d\ge4$, be an arbitrary pair such that condition (\ref{eq1.3a}). If $\bS$ is periodically switched stable, then it is absolutely stable. Equivalently, if $\rho(\bS)=1$ there exists at least one word $(w_1,\dotsc,w_n)\in\{1,2\}^n$ for some $n\ge1$ such that $\sqrt[n]{\rho(S_{w_n}\dotsm S_{w_1})}=1$.
\end{conjecture}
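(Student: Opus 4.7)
The plan is to argue by contrapositive: assume that $\bS$ is not absolutely stable and derive that some finite-length word $(w_1,\dotsc,w_n) \in \{1,2\}^n$ satisfies $\rho(S_{w_n}\dotsm S_{w_1}) = 1$. Under condition (\ref{eq1.3a}) we already have $\rho(\bS) \le 1$; if $\rho(\bS) < 1$ then $\bS$ is absolutely stable by \cite{Gur-LAA95}, so we may assume $\rho(\bS) = 1$. By Lemma~\ref{lemIV.1}, failure of absolute stability yields a recurrent switching signal $\sigma \in \varSigma_2^+$ which is not stable for $\bS$; by Proposition~\ref{propIII.7} this forces $Q_\sigma \ne 0$, equivalently $E^c(\sigma) \ne \{0\}$. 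The target becomes: manufacture from such a $\sigma$ an honest periodic word whose associated product has spectral radius $1$.

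My strategy is to exploit the isometric action on $E^c(\sigma)$. The splitting theorem provides a semi-invariant subspace of some dimension $j \in \{1,\dotsc,d-1\}$ on which every restriction $S_{\sigma_n}\dotsm S_{\sigma_1}|_{E^c(\sigma)}$ acts as a $\|\cdot\|_P$-isometry onto its image. Compactness of $\omega(\sigma) \subset \mathbb{R}^{d\times d}$, together with Lemma~\ref{lemIII.6}(c), places the collection of restricted operators inside the compact orthogonal group of $(E^c(\sigma),\|\cdot\|_P|_{E^c(\sigma)})$. Since $\sigma \in R(\theta)$, arbitrarily long prefixes $(\sigma_1,\dotsc,\sigma_n)$ reappear later in $\sigma$, giving rise to semigroup accumulation in $\omega(\sigma)$ which, by the usual Ellis-type idempotent argument, produces an idempotent $e \in \omega(\sigma)$ of rank $j$; the associated periodic block then acts as a near-identity isometry on $E^c(\sigma)$ and, by continuity of $\rho(\cdot)$, must be promotable to one whose product has spectral radius exactly $1$.

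To execute this concretely, I would mimic the inductive enumeration used in the proof of Theorem~\ref{thmV.4}: for each candidate dimension $j = \dim E^c(\sigma) \in \{1,\dotsc,d-1\}$ exhibit a finite list of forbidden periodic blocks whose absence from $\sigma$ forces $\dim E^c(\sigma) < j$. At the extreme level $j = d-1$, the constraint $\mathbb{K}_{\|\cdot\|_P}(S_k) \supseteq E^c(\sigma)$ combined with Lemma~\ref{lemIV.2} restricts $\sigma$ to short alternations, already ruled out by periodic stability on words of length $2$. For intermediate $j$, one requires a generalized version of the key observation used for $d = 3$: whenever $\sigma$ contains a block of the form $(\underbrace{k,\dotsc,k}_{\ell},w_1,\dotsc,w_m,\underbrace{k,\dotsc,k}_{\ell})$ with enough repetitions of $k$ at both ends to detect $S_k$-invariance on the restriction of $E^c(\sigma)$ to $\mathbb{K}_{\|\cdot\|_P}(S_k)$, we obtain $\rho(S_k^{\ell} S_{w_m}\dotsm S_{w_1} S_k^{\ell}) = 1$, contradicting the periodic-stability hypothesis.

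The main obstacle I anticipate is bounding the word length that must be examined: for $d = 2$ the bound is $2$ and for $d = 3$ it jumps to $8$, and there is no obvious \emph{a priori} dimensional bound in the general case. Known counterexamples to the unrestricted finiteness conjecture show that no universal bound can exist without further structural assumptions, so any proof must use condition (\ref{eq1.3a}) in an essential way\,---\,most plausibly through the fact that $\|\cdot\|_P$ is a Euclidean, and hence uniformly convex, extremal norm, which should rule out the constructions underlying the known counterexamples. Verifying that this uniform convexity provides the missing rigidity, and converting it into an effective length bound $N(d)$ for the realizing word, is where the deepest work will lie.
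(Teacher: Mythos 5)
You should note first that the paper does not prove this statement at all: it is posed explicitly as a conjecture for $d\ge4$ (the authors only settle the analogues in $d=2,3$ as Theorems~C and D, i.e.\ Theorems~\ref{thm5.1} and \ref{thmV.4}), so there is no proof of record to compare against, and any submission here would have to be a genuinely new result. Your proposal, as written, does not close the conjecture; it is a plan with at least two essential gaps beyond the one you acknowledge. The central one is the step where an idempotent $e\in\omega(\sigma)$ acting isometrically on $E^c(\sigma)$ is ``promotable, by continuity of $\rho(\cdot)$, to one whose product has spectral radius exactly $1$.'' Elements of $\omega(\sigma)$ are limits of the finite products, not finite products themselves, and continuity of the spectral radius only gives finite products with $\rho$ arbitrarily close to $1$ --- which is automatic from $\rho(\bS)=1$ and proves nothing; indeed the known counterexamples to the unrestricted finiteness conjecture are exactly situations where products converge to norm-one limits while every finite product has $\rho<1$. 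In the paper's own $d=2,3$ proofs the exact value $1$ is never obtained by a limiting/continuity argument but by a rigidity argument: dimension counts on $\mathbb{K}_{\|\cdot\|_P}(S_k)$ force specific vectors in $E^c(\sigma)$ to be mapped to $\pm$ themselves (or force $S_k$-invariance of $\mathbb{K}_{\|\cdot\|_P}(S_k)$ and then Lemma~\ref{lemIV.2}), yielding an eigenvalue of modulus one for an explicit finite product. Your sketch does not supply the analogous rigidity for $d\ge4$.

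The second gap is the ``generalized key observation'' for blocks of the form $(k,\dotsc,k,w_1,\dotsc,w_m,k,\dotsc,k)$. In the $d=3$ proof this works because $\dim\mathbb{K}_{\|\cdot\|_P}(S_2)=2$: two linearly independent vectors together with their images lying in a two-dimensional space already force $S_2$-invariance. Once $\dim\mathbb{K}_{\|\cdot\|_P}(S_k)\ge3$, finitely many vectors and their images lying in $\mathbb{K}_{\|\cdot\|_P}(S_k)$ no longer force invariance, so no fixed repetition length $\ell$ is known to produce the dichotomy ``invariance or $\rho=1$,'' and nothing in the proposal establishes it. Combined with the absence of any effective word-length bound $N(d)$ --- which you correctly identify but defer to an unproved appeal to uniform convexity of $\|\cdot\|_P$ --- the proposal is a reasonable research outline for attacking the conjecture, but not a proof, and its ``Ellis idempotent plus continuity'' shortcut would need to be replaced by an argument of the rigidity type actually used in Theorems~\ref{thm5.1} and \ref{thmV.4}.
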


Since there exist uncountable many pairs $(\alpha,\gamma)\in(0,1)\times (0,1)$, for which
\begin{equation*}
\bS_{\alpha,\gamma}=\left\{S_1=\alpha\left(\begin{matrix}1&1\\0&1\end{matrix}\right),\quad S_2=\gamma\left(\begin{matrix}1&0\\1&1\end{matrix}\right)\right\}
\end{equation*}
is periodically switched stable such that $\|S_1\|=\|S_2\|=1$ under some extremal norm $\|\cdot\|$ on $\mathbb{R}^{2}$; but $\bS_{\alpha,\gamma}$ is not absolutely stable with $\rho(\bS_{\alpha,\gamma})=1$. See, for example, \cite{BM,BTV,Koz07,HMST}. So, condition (\ref{eq1.3a}) is very important for our Theorems B, C and D and for the above conjecture. In fact, the essential good of $\|\cdot\|_P$ is to guarantee that $\mathbb{K}_{\|\cdot\|_P}(S_1)$ and $\mathbb{K}_{\|\cdot\|_P}(S_2)$ are linear subspaces of $\mathbb{R}^d$ in our arguments.


\end{document}